\newtheorem{theorem}{Theorem}[section]      	      	                        
\newtheorem{corollary}[theorem]{Corollary}     	      	      	      	      
\newtheorem{lemma}[theorem]{Lemma}     	       	      	      	      	      
\newtheorem{proposition}[theorem]{Proposition} 	      	      	      	      
\newtheorem{definition}[theorem]{Definition}   	      	                      
\newtheorem{assumption}[theorem]{Assumption}   	      	                      
\newtheorem*{problem}{Problem}   	      	                                    
\theoremstyle{remark}
\newtheorem{remark}[theorem]{Remark}
\numberwithin{equation}{section}                                              
\numberwithin{theorem}{section}                                               
\numberwithin{figure}{section}                                                
\newcommand{\mf}[1]{\mathfrak{#1}}                                            
\newcommand{\mc}[1]{\mathcal{#1}}                                             
\newcommand{\N}{\mathbb{N}}                                                   
\newcommand{\R}{\mathbb{R}}                                                   
\newcommand{\ka}{\kappa}
\newcommand{\si}{\sigma}
\newcommand{\Ga}{\Gamma}
\newcommand{\Om}{\Omega}
\renewcommand\leq\leqslant
\renewcommand\geq\geqslant
\renewcommand\pi\Pi
\begin{document} 

\title[Approximate boundary controllability for parabolic equations]{Approximate boundary controllability\\ for parabolic equations with inverse\\ square infinite potential wells}

\author{Arick Shao}
\address{ School of Mathematical Sciences, Queen Mary University of London, London E1 4NS, United Kingdom}
\email{a.shao@qmul.ac.uk}
	
\author{Bruno Vergara}
\address{Department of Mathematics, Brown University, Kassar House, 151 Thayer St., Providence, RI 02912, USA}
\email{bruno\textunderscore vergara\textunderscore biggio@brown.edu}

\begin{abstract}
We consider heat operators on a bounded domain $\Omega \subseteq \R^n$, with a critically singular potential diverging as the inverse square of the distance to $\partial \Omega$.
Although null boundary controllability for such operators was recently proved in all dimensions in \cite{new}, it crucially assumed (i) $\Omega$ was convex, (ii) the control must be prescribed along all of $\partial \Omega$, and (iii) the strength of the singular potential must be restricted to a particular subrange.
In this article, we prove instead a definitive approximate boundary control result for these operators, in that we (i) do not assume convexity of $\Omega$, (ii) allow for the control to be localized near any $x_0 \in \partial \Omega$, and (iii) treat the full range of strength parameters for the singular potential.
Moreover, we lower the regularity required for $\partial \Omega$ and the lower-order coefficients.
The key novelty is a local Carleman estimate near $x_0$, with a carefully chosen weight that takes into account both the appropriate boundary conditions and the local geometry of $\partial \Omega$.
\vspace{0.1cm}

\smallskip

\noindent \textbf{Keywords.} Approximate control, parabolic equations, singular potentials, Carleman estimates, unique continuation.
\end{abstract}

\maketitle

\section{Introduction} \label{S.intro}

Throughout this article, we will consider the following setting:

\begin{assumption} \label{ass.domain}
Let $\Omega \subseteq \R^n$ be bounded, open, and connected, with $C^2$-boundary $\Gamma := \partial \Omega$.
Moreover, let $d_\Gamma: \Omega \rightarrow \R^+$ denote the distance to $\Gamma$.
\end{assumption}

Let us consider, on $\Omega$, heat operators with a potential that diverges as the inverse square of the distance to $\Gamma$.
More precisely, we consider the equation
\begin{equation}
\label{eq.0} -\partial_t v + \Big( \Delta + \frac{ \sigma }{ d_\Gamma^2 } \Big) v + Y \cdot \nabla v + W \, v = 0
\end{equation}
on $( 0, T ) \times \Omega$, for any $T > 0$.
Here, $\sigma \in \R$ is a parameter measuring the strength of the singular potential, while $Y$ and $W$ represent general first and zero-order coefficients that are less singular at $\Gamma$; see Definition \ref{def.lower_order} below.

Note that since the potential $\sigma d_\Gamma^{-2}$ scales as the Laplacian near $\Gamma$, one cannot simply treat \eqref{eq.0} as a perturbation of the standard heat equation.
Indeed, solutions to \eqref{eq.0} exhibit radically different behavior near $\Gamma$.
Also, the inclusion of $Y$, $W$ in \eqref{eq.0} is important in our context, as $d_\Ga$ can fail to be differentiable away from $\Ga$, hence lower-order corrections are needed for our heat operator to be regular.

While null boundary controllability for certain one-dimensional analogues of \eqref{eq.0} have been known for some time (see \cite{Biccari} and references therein), analogous results in higher dimensions were established only recently by the authors and Enciso in \cite{new}.
However, \cite{new} crucially assumed $\Gamma$ is convex and required the control to be set over all of $\Gamma$, but it was unclear if these conditions could be removed.
Also, the result of \cite{new} only held under an additional restriction $\sigma < 0$.

Our objective here is to establish instead an \emph{approximate} controllability result for \eqref{eq.0} that does not depend on any geometric assumptions on $\Gamma$, that allows for the control to be localized to arbitrarily small sectors of $\Gamma$, and that is applicable to all $\sigma$ for which the boundary control problem is well-defined.
In other words, given any time $T > 0$, any initial and final states $v_0$ and $v_T$, we find localized Dirichlet data $v_d$ such that the corresponding solution to \eqref{eq.0}, with the above initial data $v_0$ and boundary data $v_d$, becomes arbitrarily close to $v_T$ at time $T$.

While the existing null controllability result of \cite{new} hinges on global Carleman and observability estimates from the boundary for the adjoint of \eqref{eq.0}, approximate controllability only requires to establish a weaker unique continuation property from the boundary.
The novel contribution in this paper is a local Carleman estimate near the boundary that yields the requisite unique continuation.
Although such a local estimate bypasses key difficulties encountered in deriving the global estimates of \cite{new}, here we can significantly weaken our assumptions as mentioned above.

In addition, compared to \cite{new}, we impose weaker regularity assumptions for both the domain boundary $\Gamma$ and the lower-order coefficients $Y$, $W$.
In particular, here we only require $\Gamma$ to be $C^2$ (as opposed to $C^4$ in \cite{new}), and we require less differentiability for $Y$ and $W$ near $\Gamma$ (see Definition \ref{def.lower_order} below).

\subsection{Boundary asymptotics}

From here on, we assume the following for $\sigma$:

\begin{assumption} \label{ass.sigma}
Throughout the paper, we will assume
\begin{equation}
\label{eq.sigma} -\tfrac{3}{4} < \sigma < \tfrac{1}{4} \text{.}
\end{equation}
For convenience, we also define $\kappa := \kappa (\si) \in \R$ be the unique value satisfying
\begin{equation}
\label{eq.kappa} \sigma := \ka ( 1 - \ka ) \text{,} \qquad - \tfrac{1}{2} < \kappa < \tfrac{1}{2} \text{.}
\end{equation}
\end{assumption}

One consequence of the potential $\sigma d_\Gamma^{-2}$ is that it drastically alters both the well-posedness theory and the boundary asymptotics of solutions $v$ to \eqref{eq.0}, compared to the usual heat equation.
In particular, both the Dirichlet and Neumann branches of $v$ behave like specific powers of $d_\Gamma$ near $\Gamma$, with the exponent depending on $\sigma$.
Roughly, the expectation from ODE heuristics is that solutions will behave as
\begin{equation}
\label{eq.branches} v \simeq v_D \, d_\Gamma^\kappa + v_N \, d_\Gamma^{ 1 - \kappa }
\end{equation}
near $\Gamma$.
To capture this formally, we define the following boundary traces:

\begin{definition} \label{notations}
Supposing the setting of Assumptions \ref{ass.domain} and \ref{ass.sigma}:
\begin{itemize}
\item We define the associated Dirichlet and Neumann trace operators:
\begin{equation}
\label{eq.boundary_trace} \mc{D}_\sigma \phi := d_\Gamma^{ -\kappa } \phi |_{ d_\Gamma \searrow 0 } \text{,} \qquad \mc{N}_\sigma \phi := d_\Gamma^{ 2 \kappa } \nabla d_\Gamma \cdot \nabla ( d_\Gamma^{ -\kappa } \phi ) |_{ d_\Gamma \searrow 0 } \text{.}
\end{equation}

\item For convenience, we also set the following shorthand:
\begin{equation}
\label{eq.singular_op} \Delta_\sigma := \Delta + \sigma d_\Gamma^{-2} \text{.}
\end{equation}
\end{itemize}
\end{definition}

In particular, the traces \eqref{eq.boundary_trace} make precise the coefficients $v_D$ and $v_N$ (respectively) in \eqref{eq.branches}.
These traces play essential roles in the well-posedness theory of \eqref{eq.0} and its adjoint, and they serve to vindicate the boundary asymptotics suggested in \eqref{eq.branches}; see Section \ref{S.wp} below.
We note that similar boundary traces have also been constructed for analogously singular wave equations; see \cite{JEMS, Warnick}.

\begin{remark}
The restriction \eqref{eq.sigma} naturally arises from the well-posedness theory of \eqref{eq.0}.
First, we note that \eqref{eq.0} is expected to be ill-posed for $\sigma > \frac{1}{4}$ (see \cite{Baras, Biccari, VZuazua}), while boundary controllability is known to fail when $\sigma \nearrow \frac{1}{4}$ \cite{Biccari}.
In addition, note that when $\sigma \leq -\frac{3}{4}$ ($\kappa \leq -\frac{1}{2}$), the Dirichlet branch of solutions---that corresponding to $v_D \, d_\Gamma^\kappa$ in \eqref{eq.branches}---no longer lies in $L^2 ( \Omega )$.
\end{remark}

\subsection{Main results}

Next, we provide the precise assumptions that we impose on the lower-order coefficients $Y$ and $W$ in \eqref{eq.0}:

\begin{definition} \label{def.lower_order}
We let $\mc{Z}_0$ denote the collection of all pairs $( Y, W )$, where:
\begin{itemize}
\item $Y: \Omega \rightarrow \R^n$ is a $C^1$-vector field.

\item $W: \Omega \rightarrow \R$, and $d_\Gamma W \in L^\infty ( \Omega )$.
\end{itemize}
\end{definition}

\begin{remark}
Note Definition \ref{def.lower_order} is strictly weaker than the corresponding assumptions in \cite{new}, in that we require less differentiability for both $Y$ and $W$ near $\Gamma$.
\end{remark}

The main result of this paper is the following approximate boundary controllability property for the critically singular heat equation \eqref{eq.0}:

\begin{theorem} \label{T.approx0}
Suppose Assumptions \ref{ass.domain} and \ref{ass.sigma} hold.
Also, let $( Y, W ) \in \mc{Z}_0$, and fix any open $\omega \subseteq \Gamma$.
Then, given any $T > 0$, any $v_0, v_T \in H^{-1} (\Omega)$, and any $\epsilon > 0$, there exists $v_d \in L^2 ( (0, T) \times \Gamma )$, supported within $( 0, T ) \times \omega$, such that the solution $v$ to \eqref{eq.0}, with initial data $v(0)=v_0$ and Dirichlet trace $\mc{D}_\sigma v = v_d$, satisfies
\begin{equation}
\label{eq.app_ctrl} \| v(T) - v_T \|_{ H^{-1} (\Om) } \leq \epsilon \text{.}
\end{equation}
\end{theorem}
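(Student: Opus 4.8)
The plan is to deduce approximate controllability from a unique continuation property for the adjoint equation by the standard duality/Hahn--Banach argument, and to obtain that unique continuation property from a new local Carleman estimate near an arbitrary boundary point $x_0 \in \omega$. First I would set up the functional-analytic reduction: by linearity and density it suffices to treat $v_0 = 0$, so the claim becomes that the reachable set $\mathcal{R}_T := \{ v(T) : v_d \in L^2((0,T)\times\omega) \}$ is dense in $H^{-1}(\Omega)$. By the Hahn--Banach theorem, density fails iff there is a nonzero $g \in H^1_0(\Omega)$ (the dual of $H^{-1}$) annihilating $\mathcal{R}_T$; pairing $g$ against $v(T)$ and integrating by parts against the backward adjoint problem
\begin{equation*}
\partial_t \psi + \Delta_\sigma \psi - \nabla\cdot(Y\psi) + W\psi = 0 \text{ on } (0,T)\times\Omega, \qquad \psi(T) = g,
\end{equation*}
with the adjoint (Dirichlet-type) boundary condition $\mathcal{D}_\sigma \psi = 0$, one shows (using the trace theory and Green's identity from Section \ref{S.wp}) that $\langle g, v(T)\rangle$ equals a boundary integral of the form $\int_0^T\!\!\int_\omega v_d \cdot \mathcal{N}_\sigma\psi$. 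Hence $g \perp \mathcal{R}_T$ forces $\mathcal{N}_\sigma \psi \equiv 0$ on $(0,T)\times\omega$. Since $\psi$ already satisfies $\mathcal{D}_\sigma\psi = 0$ on all of $\Gamma$, the desired conclusion $g \equiv 0$ follows once we know: \emph{a solution of the adjoint equation with both $\mathcal{D}_\sigma\psi = 0$ on $\Gamma$ and $\mathcal{N}_\sigma\psi = 0$ on $(0,T)\times\omega$ must vanish identically.}

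The heart of the paper is proving exactly this unique continuation statement. I would localize near a point $x_0 \in \omega$ and flatten the boundary, so that $\Gamma$ becomes (locally) $\{x_n = 0\}$ and $d_\Gamma \approx x_n$; the $C^2$ hypothesis is what makes this change of variables admissible while keeping the error terms in the metric subcritical. Conjugating $\psi$ by $d_\Gamma^{-\kappa}$ turns $\Delta_\sigma$ into a degenerate elliptic operator of Bessel type in the normal variable, for which the traces $\mathcal{D}_\sigma,\mathcal{N}_\sigma$ become the natural Dirichlet and weighted-Neumann data. I would then establish a local Carleman inequality for this conjugated parabolic operator with a weight $e^{2s\varphi}$ where $\varphi$ is chosen to (i) be adapted to the degeneracy at $x_n=0$ so that \emph{both} boundary terms produced by integration by parts have a favorable sign given the vanishing of $\mathcal{D}_\sigma\psi$ and $\mathcal{N}_\sigma\psi$ near $x_0$, and (ii) have the pseudoconvexity/convexity needed for the interior commutator terms to dominate, while its level sets foliate a lens-shaped region whose "inner" boundary lies strictly inside $\Omega$. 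A Carleman estimate of the form
\begin{equation*}
s\!\int e^{2s\varphi}\big(|\nabla w|^2 + s^2 |w|^2\big) \le C\!\int e^{2s\varphi}\,|\mathcal{L}_\sigma w|^2
\end{equation*}
(for $w$ supported in the local lens, $s$ large, after absorbing $Y,W$ using $Y\in C^1$ and $d_\Gamma W \in L^\infty$), combined with a cutoff and the standard "large $s$" argument, yields that $\psi$ vanishes in a one-sided neighborhood of $x_0$; a second, interior Carleman estimate (or the known parabolic unique continuation in the interior, since away from $\Gamma$ the potential is smooth) propagates the vanishing to all of $\Omega$ by the connectedness assumption, and a backward uniqueness / analyticity-in-time argument pushes it up to $t = T$, giving $g = \psi(T) = 0$.

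The main obstacle is the design of the Carleman weight $\varphi$ and the precise bookkeeping of the boundary terms at $\{x_n = 0\}$. Unlike the interior case, here the conjugated operator is genuinely degenerate at the boundary, so one cannot simply throw away the boundary contributions: the integration by parts generates terms weighted by powers of $x_n$ that must be controlled using exactly the trace conditions $d_\Gamma^{-\kappa}\psi \to 0$ and $d_\Gamma^{2\kappa}\partial_n(d_\Gamma^{-\kappa}\psi)\to 0$, and the weight must be engineered so that the surviving boundary integrand is nonnegative (or vanishes) when these traces do, \emph{uniformly across the whole admissible range $-\tfrac34 < \sigma < \tfrac14$}, i.e. $-\tfrac12 < \kappa < \tfrac12$ including the delicate sign change at $\kappa = 0$. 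Getting a single weight that simultaneously handles the boundary degeneracy, respects the local curvature of $\Gamma$ (which only enters at the $C^2$ level, hence as bounded-but-not-differentiable lower-order perturbations absorbed via $Y,W$), and retains parabolic pseudoconvexity in the interior, is the technical crux; everything else is the by-now-classical Fursikov--Imanuvilov/Hahn--Banach machinery.
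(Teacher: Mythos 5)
Your overall architecture matches the paper's: reduce to $v_0 \equiv 0$, reduce approximate controllability to a unique continuation property for the adjoint problem (vanishing of $\mc{D}_\sigma\psi$ on $\Gamma$ and of $\mc{N}_\sigma\psi$ on $(0,T)\times\omega$ implies $\psi\equiv 0$), and prove that property by a local Carleman estimate near a point $x_0\in\omega$. Your functional-analytic step differs in flavor: you use the Hahn--Banach density argument, while the paper runs the constructive variational version (minimizing the functional $I_\epsilon(u_T) = \epsilon\|u_T\|_{H^1} + \tfrac12\int(\mc{N}_\sigma u)^2 + \int u_T v_T$, whose coercivity is exactly where unique continuation enters). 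These are interchangeable here, given the duality identity \eqref{weak_id}; the variational route additionally produces the control explicitly as $v_d = \mc{N}_\sigma\varphi|_{(0,T)\times\omega}$, which is what the paper needs to guarantee the support condition. Your remark about needing backward uniqueness to reach $t=T$ is unnecessary: the weight $\theta(t)=1/(t(T-t))$ yields vanishing on all of $(0,T)\times\Omega$, and continuity in time does the rest.

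The genuine gap is in the Carleman step, precisely at the point you flag as ``the technical crux'' and then leave unresolved: the range $0<\sigma<\tfrac14$. Your proposal conjugates by $d_\Gamma^{-\kappa}$ and implicitly ties the degenerate part of the weight to the exponent $\kappa$ determined by $\sigma$. In the paper's scheme the analogous choice is the exponent $p$ in $f = \theta\big(\tfrac{1}{1+2p}y^{1+2p}+|w|^2\big)$, and the positivity of the cubic bulk term $-4p\,\lambda^3\theta^3 y^{-1+6p}v^2$ (as well as the Hardy-type completion of squares) forces $p<0$. When $\sigma>0$ one has $\kappa>0$, so one \emph{cannot} take $p=\kappa$, and a direct Carleman estimate with the ``correct'' weight is out of reach---this is the same obstruction as in \cite{new}. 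The resolution is not a cleverer single weight but an observation about the solution itself: once $\mc{D}_\sigma u$ and $\mc{N}_\sigma u$ both vanish near $x_0$, Proposition \ref{neumann_limits} shows $\eta^\delta(y^\kappa D_y u)$ and $\eta^\delta(y^{-1+\kappa}u)$ tend to zero at the quantitative rate $\delta^{(1+2\kappa)/2}$, which is enough extra boundary decay to run the Carleman estimate with a \emph{mismatched} exponent $p$ close to $0^-$ (the condition $|p|\ll\tfrac14-\sigma$ in \eqref{eq.ass_p}) and still have all boundary terms at $\{y=\delta\}$ vanish as $\delta\searrow0$ (the factor $\delta^{2p-2\kappa}\cdot\delta^{1+2\kappa}=\delta^{1+2p}\to0$ in \eqref{intC_loc_11}). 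Without this decoupling of $p$ from $\kappa$ your argument covers only $\sigma\le 0$. Secondarily, your claim that the curvature of $\Gamma$ is absorbed as a lower-order perturbation via $Y,W$ is not how non-convexity is handled: $\nabla^2 y$ enters the commutator term $\nabla v\cdot\nabla^2 f\cdot\nabla v$ at top order and is dominated by choosing the zero-order multiplier parameter $z$ large, which is available precisely because the estimate is local.
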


The key step in proving Theorem \ref{T.approx0} is a corresponding unique continuation property for the adjoint of \eqref{eq.0}---the backward heat equation
\begin{equation}
\label{eq.1} \partial_t u + \Big( \Delta + \frac{ \sigma }{ d_\Gamma^2 } \Big) u + X \cdot \nabla u + V \, u = 0 \text{.}
\end{equation}
Roughly, we show that \emph{if a ($H^1$-)solution $u$ to \eqref{eq.1} satisfies}
\begin{equation}
\label{eq.ucp_hyp} \mc{D}_\sigma u |_{ ( 0, T ) \times \omega } = \mc{N}_\sigma u |_{ ( 0, T ) \times \omega } = 0 \text{,}
\end{equation}
\emph{for any open $\omega \subseteq \Gamma$, then $u \equiv 0$ everywhere on $( 0, T ) \times \Omega$}.
The precise statement of this property is provided later in Theorem \ref{T.UC}.

In particular, once this unique continuation property is established, approximate controllability follows by adapting standard HUM arguments (see, e.g., \cite{Lions, MZ}) to \eqref{eq.0} and \eqref{eq.1} in the appropriate well-posed settings; see Section \ref{S.proof} for details.

\subsection{Well-posedness}

A crucial building block for Theorem \ref{T.approx0} and the HUM is a pair of dual well-posedness theories for \eqref{eq.0} and \eqref{eq.1}, in the $H^{-1}$ and $H^1$-levels, respectively.
While well-posedness for linear heat equations, at various regularity levels, is by now classical, the presence of the singular potentials and the ensuing modified boundary asymptotics complicate this process.

The well-posedness of \eqref{eq.0}, at the $L^2$ and $H^1$-levels, and with \emph{vanishing Dirichlet data}, was briefly described in \cite{VZuazua} using standard semigroup methods.
Moreover, these results held in a larger range $\sigma < \frac{1}{4}$, but they were only stated in the specific case $( Y, W ) \equiv ( 0, 0 )$.
(Note that \cite{VZuazua} treated the interior control problem, where one could avoid dealing with the modified boundary traces.)

The requisite well-posedness theory for \emph{nontrivial Dirichlet data}, and for a general class of $( Y, W )$, was summarized in \cite{new}.
In particular, \cite{new} constructed, from the $H^1$-theory mentioned above (lightly modified to account for nontrivial $( Y, W )$), a dual theory of transposition solutions at the $H^{-1}$-level, with prescribed inhomogeneous Dirichlet data.
A key point here is to show that the Neumann trace operator $\mc{N}_\sigma$ is well-defined in the $H^1$-theory, under the additional restriction $\sigma > -\frac{3}{4}$.
(As mentioned before, the condition $\sigma > -\frac{3}{4}$ is natural in this setting, since the Dirichlet branch no longer lies in $L^2$ when this is violated.)

Since we are treating the full range $-\frac{3}{4} < \sigma < \frac{1}{4}$ in this article (as opposed to only $-\frac{3}{4} < \sigma < 0$ in \cite{new}), and since we also weaken the regularity assumptions for $\Gamma$ and $( Y, W )$, here we elect, in Section \ref{S.wp}, to provide a more detailed treatment of the well-posedness theory for completeness.
In particular, we give a careful accounting of the existence and boundedness of the modified boundary traces \eqref{eq.boundary_trace}, and we clarify the spaces employed in various parts of the development.

Modifying the well-posedness theory to treat the weaker ($C^2$-)regularity for $\Gamma$ is straightforward, as this is simply a matter of noting that one never takes more than two derivatives of $d_\Gamma$ in the analysis.
The same is also mostly true for treating the weaker regularity of $( Y, Z )$; however, a notable exception is in the $H^{-1}$-theory for \eqref{eq.0} (the controllability side of the HUM), for which the argument requires a new technical ingredient---a notion of solution for \eqref{eq.0} in conjunction with a highly singular forcing term; see Proposition \ref{wp_weak} and its proof for details.

\begin{remark}
A well-posedness theory for wave equations with an analogously singular potential, with prescribed inhomogeneous boundary data, was established by Warnick \cite{Warnick} using Galerkin methods.
A similar approach could also be used here in order to treat lower-order coefficients $( Y, W )$ that are time-dependent.
\end{remark}

\subsection{Ideas of the proof}

The unique continuation property is proved via a new local Carleman estimate for \eqref{eq.1}---stated in Theorem \ref{T.Carleman}---that is supported near a point $x_0 \in \Gamma$.
The estimate itself is similar in structure to that of \cite{new}, in that it captures the Neumann data on $( 0, T ) \times \omega$; the reader is referred to discussions in \cite{new} for the basic ideas of the proof.
Here, we instead focus on the novel features in the present Carleman estimate that were not found in \cite{new}.

The first novelty is that the local Carleman estimate no longer requires convexity of $\Gamma$.
The key technical idea arises from a modifier to the zero-order term in the multiplier---the parameter $z > 0$ both here and in \cite{new}, which must be large enough to overcome any concavity in $\Gamma$.
In \cite{new}, the size of $z$ was further constrained by quantities in the interior of $\Omega$ that must be absorbed.
This constraint is not present here, since our estimate is localized near $\Gamma$, hence $z$ can be chosen as large as needed.

The second new feature is that the control is localized to a single $x_0 \in \Gamma$, which stems from a modification to the Carleman weight exponent.
In \cite{new}, the exponent
\[
( 1 + 2 \kappa )^{-1} d_\Gamma^{1+2\kappa}
\]
vanishes precisely on $\Gamma$; here, we adopt a modified exponent,
\begin{equation}
\label{eq.exponent} ( 1 + 2 \kappa )^{-1} d_\Gamma^{1+2\kappa} + | w |^2 \text{,}
\end{equation}
that vanishes only at $x_0$, with $w := ( w^1, \dots, w^{n-1} )$ being a completion of $d_\Gamma$ into a local coordinate system.
Most crucially, the $\nabla w^i$'s are constructed to be orthogonal to $\nabla d_\Gamma$, so that the interactions between $w$ and the singular potential $\sigma d_\Gamma^{-2}$ do not produce dangerous terms that cannot be treated.

The above points suffice to treat the range $-\frac{3}{4} < \sigma < 0$, however when $\sigma > 0$, one faces the same fundamental difficulties in establishing a Carleman estimate as in \cite{new}.
Nonetheless, for approximate control, we can sidestep this issue entirely, as we only require a unique continuation property for \eqref{eq.1}, rather than observability.
The crucial observation when $\sigma > 0$ is that if \eqref{eq.ucp_hyp} holds, then the Neumann trace $\mc{N}_\sigma u$ must vanish like a sufficiently positive power of $d_\Gamma$; see Proposition \ref{neumann_limits}.
In other words, $u$ vanishes at $\Gamma$ like (in fact, better than) solutions of \eqref{eq.1} with $\sigma < 0$.
This then allows us to \emph{apply instead the above-mentioned Carleman estimates with weight corresponding to a negative $\sigma$}, for which we are now able to extract the positivity on the bulk terms necessary for the unique continuation property.


\begin{remark}
It is expected that null controllability should still hold without assuming convexity for $\Omega$, however this will be pursued in a future paper.
On the other hand, it is not yet clear whether one can localize null controls near a single $x_0 \in \Gamma$, as the Carleman estimates in \cite{new} crucially relied on weights constructed from $d_\Gamma$ itself.
Similarly, null controllability in the range $0 < \sigma < \frac{1}{4}$ remains an open question; however, intuitions from \cite{Gueye} seem to indicate that a proof via HUM and observability may only hold through a sharper well-posedness theory involving fractional Sobolev spaces.
\end{remark}

Finally, similar to the well-posedness theory, to treat the case of $\Gamma$ being only $C^2$, we modify the derivation of our Carleman estimate (compared to \cite{new}) so that we only take two derivatives of $d_\Gamma$.
The key observation is that only the most singular terms need to be treated via integrations by parts that lead to differentiating $d_\Gamma$, and those terms do not contain derivatives of $d_\Gamma$ to begin with.

\begin{remark}
The above regularity improvements can also be applied to \cite{new}, so that the null controllability results in \cite{new} also hold for $\Gamma \in C^2$ and $( Y, W ) \in \mc{Z}_0$.
\end{remark}

\subsection{Previous results}

Parabolic equations with inverse square potentials have attracted much attention over the last decades, though most existing results treat potentials $\sigma |x|^{-2}$ diverging at a single point; for some early results, see \cite{Baras, Ireneo}.
For conciseness, we limit our discussions to controllability of such equations.

For $n = 1$, there is a vast source of literature addressing the operator
\begin{equation}
\label{eq.heat_1d} -\partial_t + \partial_x^2 + \sigma \, x^{-2}
\end{equation}
on $\Omega := ( 0, 1 )$, see, e.g., \cite{Biccari, CMV, CMV4, CTY, Gueye, MV}.
(Note, however, that \eqref{eq.heat_1d} is not quite an analogue of \eqref{eq.0}, as the potential is regular at $1 \in \Gamma$.)
In particular, Biccari \cite{Biccari} proved, via the moment method, boundary null controllability from $x = 0$ for $-\frac{3}{4} < \sigma < \frac{1}{4}$.
In addition, \cite{Biccari} highlighted the difficulty of extending the result to higher dimensions, as well as the desirability of robust methods---e.g.\ via Carleman estimates---that could extend to nonlinear equations.

In higher dimensions, with general $\Omega \subseteq \R^n$, \cite{Cazacu, Ervedoza, VZuazua} established interior null controllability results for the singular heat operator
\begin{equation}
\label{eq.0_pt} - \partial_t + \Delta + \sigma \, | x - x_0 |^{-2} \text{,}
\end{equation}
with either $x_0 \in \Omega$ or $x_0 \in \Gamma$.
However, the setting \eqref{eq.0}, in which the potential becomes singular on all of $\Gamma$, has long been known to be especially difficult.
In \cite{BZuazua}, Biccari and Zuazua proved interior null controllability for $-\partial_t + \Delta_\sigma$ using Carleman estimates, but the same techniques could not be used for boundary control as the Carleman estimates fail to capture the full $H^1$-energy and the boundary data \eqref{eq.boundary_trace}.

Very recently, in \cite{new}, the present authors, along with A.\ Enciso, established the first boundary null controllability result for \eqref{eq.0} in all spatial dimensions, under the additional assumption that $\Gamma$ is convex.
Since the proof employs Carleman estimates, the result is robust in that one can consider general lower-order coefficients $Y$, $W$.
Additionally, in contrast to the Carleman estimates in~\cite{BZuazua}, the estimates of \cite{new} capture the appropriate notion of the Neumann data at the boundary and the natural $H^1$-energy, both of which are crucial for boundary null control.

Lastly, there is also an extensive body of approximate controllability results for linear and nonlinear parabolic equations---see, for example, \cite{Fabre, FPZ, FCZ1, FCZ2, Lions, MZ}, though this list is nowhere near complete.

\subsection{Outline of the paper}

In Section \ref{S.wp}, we discuss the relevant well-posedness theories for \eqref{eq.0} and its adjoint \eqref{eq.1}.
The heart of the analysis lies in Section \ref{S.carleman}, which presents the novel local Carleman estimate for \eqref{eq.1}.
The key unique continuation property for \eqref{eq.1} is then proved in Section \ref{S.uc}, while our main approximate control result, Theorem \ref{T.approx0}, is proved in Section \ref{S.proof}.

\section{Well-Posedness} \label{S.wp}

In this section, we give a self-contained presentation of the well-posedness theories needed for \eqref{eq.0} and \eqref{eq.1}.
As is standard in HUM proofs, we will need dual theories for the controllability and observability settings.

For the \emph{observability} side, we will consider the following two problems:

\begin{problem}[OI]
Given final data $u_T$ on $\Omega$, and forcing term $F$ on $( 0, T ) \times \Omega$, solve the following final-boundary value problem for $u$,
\begin{align}
\label{heat_ex} ( \partial_t + \Delta_\sigma + X \cdot \nabla + V ) u = F &\quad \text{on $( 0, T ) \times \Omega$,} \\
\notag u ( T ) = u_T &\quad \text{on $\Omega$,} \\
\notag \mc{D}_\sigma u = 0 &\quad \text{on $( 0, T ) \times \Gamma$,}
\end{align}
where the lower-order coefficients satisfy $( X, V ) \in \mc{Z}_0$.
\end{problem}

\begin{problem}[O]
Given final data $u_T$ on $\Omega$, solve the following for $u$,
\begin{align}
\label{heat_obs} ( \partial_t + \Delta_\sigma + X \cdot \nabla + V ) u = 0 &\quad \text{on $( 0, T ) \times \Omega$,} \\
\notag u ( T ) = u_T &\quad \text{on $\Omega$,} \\
\notag \mc{D}_\sigma u = 0 &\quad \text{on $( 0, T ) \times \Gamma$,}
\end{align}
where the lower-order coefficients satisfy $( X, V ) \in \mc{Z}_0$.
\end{problem}

The corresponding problem on the \emph{controllability} side is the following:

\begin{problem}[C]
Given initial data $v_0$ on $\Omega$, as well as Dirichlet boundary data $v_d$ on $( 0, T ) \times \Ga$, solve the initial-boundary value problem for $v$,
\begin{align}
\label{heat_ctl} -\partial_t v + \Delta_\sigma v + Y \cdot \nabla v + W v = 0 &\quad \text{on $( 0, T ) \times \Omega$,} \\
\notag v ( 0 ) = v_0 &\quad \text{on $\Omega$,} \\
\notag \mc{D}_\sigma v = v_d &\quad \text{on $( 0, T ) \times \Gamma$} \text{,}
\end{align}
where the lower-order coefficients satisfy $( Y, W ) \in \mc{Z}_0$.
\end{problem}

\begin{remark}
We employ the labels (O), (OI), (C) to maintain consistency with \cite{new}.
Note that Problem (O) is simply Problem (OI) in the special case $F \equiv 0$.
\end{remark}

\subsection{Preliminaries}

As usual, we define $H^1_0 ( \Omega )$ to be the closure, in the $H^1 ( \Omega )$-norm, of the space $C^\infty_0 ( \Omega )$ of smooth functions on $\Omega$ with compact support, and we define $H^{-1} ( \Omega )$ be the Hilbert space dual of $H^1_0 ( \Omega )$.

\begin{definition} \label{op}
For convenience, we define the following quantities,
\begin{align}
\label{AB_sigma} A_\sigma := \Delta_\sigma + X \cdot \nabla + V \text{,} \qquad B_\sigma := \Delta_\sigma + Y \cdot \nabla + W \text{,}
\end{align}
which we can also view as unbounded operators on appropriate spaces:
\begin{align}
\label{AB_op} A_\sigma: \mf{D} ( A_\sigma ) := \{ \phi \in H^1_0 ( \Omega ) \mid A_\sigma \phi \in L^2 ( \Omega ) \} \rightarrow L^2 ( \Omega ) \text{,} \\
\notag B_\sigma: \mf{D} ( B_\sigma ) := \{ \phi \in H^1_0 ( \Omega ) \mid B_\sigma \phi \in L^2 ( \Omega ) \} \rightarrow L^2 ( \Omega ) \text{.}
\end{align}
\end{definition}

\begin{remark}
For approximation purposes, it will also be useful to consider
\begin{align}
\label{AB_2} \mf{D} ( A_\sigma^2 ) &:= \{ \phi \in \mf{D} ( A_\sigma ) \mid A_\sigma \phi \in \mf{D} ( A_\sigma ) \} \text{,}
\end{align}
and similarly for $B_\sigma$.
Note in particular that $\mf{D} ( A_\sigma )$ and $\mf{D} ( A_\sigma^2 )$ are dense in $L^2 ( \Omega )$, $H^1_0 ( \Omega )$, and $H^{-1} ( \Omega )$, since all these domains contain $C^\infty_0 ( \Omega )$ by definition.
\end{remark}

Recall that $d_\Gamma$ could fail to be differentiable away from $\Gamma$.
Thus, for our analysis, we will need to work with a sufficiently smooth correction to $d_\Gamma$:

\begin{definition} \label{bdf}
We fix a \emph{boundary defining function} $y \in C^2 ( \Omega )$ satisfying:
\begin{itemize}
\item $y > 0$ everywhere on $\Omega$.

\item $y$ and $d_\Gamma$ coincide on a neighborhood of $\Gamma$.
\end{itemize}
Furthermore, for any $q \in \R$, we define the following shorthands:
\begin{equation}
\label{twisted_deriv} D_y := \nabla y \cdot \nabla \text{,} \qquad \nabla_q := y^q \nabla y^{-q} \text{.}
\end{equation}
\end{definition}

\begin{remark}
It is often more convenient to rewrite our operators in the form
\begin{equation}
\label{mod_op} A_\sigma = \nabla_{ -\kappa } \cdot \nabla_\kappa + X \cdot \nabla + V_y \text{,} \qquad B_\sigma = \nabla_{ -\kappa } \cdot \nabla_\kappa + Y \cdot \nabla + W_y \text{,}
\end{equation}
where the modified potentials $V_y$ and $W_y$ are given by
\begin{equation}
\label{VW_y} V_y - V = W_y - W = \kappa y^{-1} \Delta y + \sigma ( d_\Gamma^{-2} - y^{-2} | \nabla y |^2 ) \text{.}
\end{equation}
Note that since $y \in C^2 ( \Omega )$ and $y = d_\Gamma$ near $\Gamma$, then $( X, V_y ), ( Y, W_y ) \in \mc{Z}_0$.
\end{remark}

The subsequent construction will be helpful for dealing with boundary traces:

\begin{definition} \label{traces}
Let $0 < \delta \ll 1$ be small enough so that $y = d_\Gamma$ on $\{ y = \delta \}$.
\begin{itemize}
\item Given any $x \in \Gamma$, we let $x_\delta \in \{ y = \delta \}$ denote the point that is connected to $x$ along an integral curve of $\nabla y$ lying within $\{ y \leq \delta \}$.

\item For any $\phi \in H^1_{ \mathrm{loc} } ( \Omega )$, we define its trace on $\{ y = \delta \}$ by
\begin{equation}
\label{traces_space} \eta^\delta \phi \in L^2 ( \Gamma ) \text{,} \qquad \eta^\delta ( \phi ) (x) := \phi ( x_\delta ) \text{.}
\end{equation}

\item Given $\psi \in L^2 ( ( 0, T ); H^1_{ \mathrm{loc} } ( \Omega ) )$, we similarly define its trace by
\begin{equation}
\label{traces_time} \eta^\delta \psi \in L^2 ( ( 0, T ) \times \Gamma ) \text{,} \qquad \eta^\delta ( \psi ) ( t, x ) := \psi ( t, x_\delta ) \text{.}
\end{equation}
\end{itemize}
(The restrictions \eqref{traces_space} and \eqref{traces_time} are defined in the sense of Sobolev traces.)
\end{definition}

\begin{remark}
In particular, we can precisely define our boundary trace operators $\mc{D}_\sigma$ and $\mc{N}_\sigma$ as limits of $\eta^\delta$'s in Definition \ref{traces} as $\delta \searrow 0$.
\end{remark}

Next, we collect some key properties of $H^1_0 ( \Omega )$, in particular with regards to our singular potential.
We first recall the Hardy inequality that was proved in \cite{BM}:

\begin{proposition}[Hardy inequality \cite{BM}] \label{prop_hardy}
There exists $c \in \R$, depending on $\Omega$, with
\begin{equation}
\label{hardy_main} \tfrac{1}{4} \int_\Omega d_\Gamma^{-2} \phi^2 \leq \int_\Omega | \nabla \phi |^2 + c \int_\Omega \phi^2 \text{,} \qquad \phi \in H^1_0 ( \Omega ) \text{.}
\end{equation}
\end{proposition}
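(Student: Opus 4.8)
The plan is to reduce the global inequality to a local statement near $\Gamma$, where $d_\Gamma$ coincides with a smooth boundary defining function, and then invoke the classical one-dimensional Hardy inequality in the normal direction with the sharp constant $\tfrac14$. First I would fix a collar neighborhood $U_\rho := \{ x \in \Omega \mid d_\Gamma(x) < \rho \}$ of $\Gamma$, with $\rho$ small enough that $d_\Gamma \in C^2(U_\rho)$ and $|\nabla d_\Gamma| \equiv 1$ there (possible since $\Gamma \in C^2$ by Assumption \ref{ass.domain}). Away from $\Gamma$, i.e.\ on $\Omega \setminus U_{\rho/2}$, the weight $d_\Gamma^{-2}$ is bounded, so $\tfrac14 \int_{\Omega \setminus U_{\rho/2}} d_\Gamma^{-2} \phi^2 \leq C_\rho \int_\Omega \phi^2$ trivially, and this contributes only to the lower-order term with constant $c$. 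Hence everything reduces to proving the inequality with the integration restricted to $U_\rho$, at the cost of an error term $\int_\Omega \phi^2$ absorbed into $c$.

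On $U_\rho$ I would use the normal-coordinate foliation: parametrize $U_\rho$ by $(s, y') \in (0,\rho) \times \Gamma$, where $s = d_\Gamma(x)$ and $y'$ labels the integral curve of $\nabla d_\Gamma$, so that $dx = J(s, y')\, ds\, d\mathrm{vol}_\Gamma(y')$ with a Jacobian $J$ that is $C^1$, positive, and bounded above and below on $U_\rho$ (here the $C^2$ regularity of $\Gamma$ is exactly what is needed, and no third derivative of $d_\Gamma$ appears). For each fixed $y'$, the one-dimensional Hardy inequality $\tfrac14 \int_0^\rho s^{-2} f(s)^2\, ds \leq \int_0^\rho f'(s)^2\, ds$ holds for $f \in C^\infty_c((0,\rho))$; applying it to $f(s) = \phi(s, y')$ and noting $f'(s) = \nabla d_\Gamma \cdot \nabla \phi$, then integrating over $y' \in \Gamma$ against $d\mathrm{vol}_\Gamma$ and comparing with the weighted measure $J\, ds\, d\mathrm{vol}_\Gamma$, gives
\[
\tfrac14 \int_{U_\rho} d_\Gamma^{-2} \phi^2 \leq C \int_{U_\rho} |\nabla d_\Gamma \cdot \nabla \phi|^2 + C \int_{U_\rho} \phi^2 \leq C \int_\Omega |\nabla \phi|^2 + C \int_\Omega \phi^2,
\]
where the error terms come from the mismatch between $dx$ and $ds\, d\mathrm{vol}_\Gamma$ (controlled since $\log J \in C^1$) and from passing from $C^\infty_c((0,\rho))$ to general $\phi$ via a cutoff near $s = \rho$.

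The main obstacle, and the reason the stated constant is exactly $\tfrac14$ rather than something smaller, is that a naive slicing argument produces a constant degraded by the Jacobian factor $\sup J / \inf J$, which need not be $1$. The fix is to absorb the geometry into the lower-order term: one writes $\tfrac14 \int_{U_\rho} d_\Gamma^{-2} \phi^2 J \leq \int_{U_\rho} |\nabla d_\Gamma \cdot \nabla \phi|^2 J + (\text{terms from integrating by parts against } \partial_s J)$, i.e.\ one redoes the one-dimensional Hardy computation (multiply the ODE identity by the weight $J$ and integrate by parts in $s$) directly with the correct measure, so that the extra terms involve $\partial_s J / J$, which is bounded, and hence land in $c \int_\Omega \phi^2$ without touching the sharp constant. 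Since the proposition is quoted from \cite{BM}, I would cite that reference for the sharp-constant bookkeeping and only sketch the normal-coordinate reduction above; the one technical point worth flagging is that $|\nabla d_\Gamma \cdot \nabla \phi| \leq |\nabla \phi|$ pointwise, which is what lets the right-hand side be the full gradient. This same collar decomposition and foliation will be reused throughout the paper, so setting it up carefully here is worthwhile.
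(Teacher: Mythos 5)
The paper does not actually prove this proposition: it is quoted verbatim from Brezis--Marcus \cite{BM}, and the text moves straight on to Corollary \ref{cor_hardy}. So your sketch is supplementary, and its overall architecture---reduce to a collar $\{ d_\Gamma < \rho \}$ where $d_\Gamma \in C^2$ and $| \nabla d_\Gamma | = 1$, foliate by level sets of $d_\Gamma$, apply the one-dimensional Hardy inequality along the normal direction using $| \nabla d_\Gamma \cdot \nabla \phi | \leq | \nabla \phi |$, and send the interior region and the cutoff commutators into $c \int_\Omega \phi^2$---is the standard route and is consistent with how the paper manipulates $d_\Gamma$ elsewhere (compare Proposition \ref{prop_hardy_gen}).

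The one step that does not close as written is your treatment of the Jacobian. Running the weighted one-dimensional computation with multiplier $\frac{1}{2s}$ against the measure $J \, ds \, d\mathrm{vol}_\Gamma$ and integrating by parts in $s$ leaves the term $\frac{1}{2} \int \phi^2 \, d_\Gamma^{-1} \, \partial_s J$, not $C \int \phi^2$: the factor $\partial_s J$ (equivalently $\partial_s J / J$) is indeed bounded, but it multiplies $\phi^2 / d_\Gamma$, whose weight is unbounded, so this does \emph{not} land in $c \int_\Omega \phi^2$. Absorbing $\int \phi^2 / d_\Gamma$ by Cauchy--Schwarz into $\epsilon \int \phi^2 / d_\Gamma^2 + C_\epsilon \int \phi^2$ (or via $d_\Gamma^{-1} \leq \rho \, d_\Gamma^{-2}$ on the collar) degrades the constant to $\frac{1}{4} - \epsilon$, and the exact value $\frac{1}{4}$ is load-bearing here: Corollary \ref{cor_hardy} needs it to cover the full range $\sigma < \frac{1}{4}$. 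Preserving the sharp constant is precisely the nontrivial content of the Brezis--Marcus theorem. The gap is fixable within your framework---for instance, replace the multiplier by $\frac{1}{2s} - \frac{\partial_s J}{2 J}$, after which the $\phi^2 / d_\Gamma$ contributions cancel identically and the remaining errors are $O ( \phi^2 )$ because $\partial_s^2 J$ is bounded for a $C^2$ boundary; alternatively one can invoke the logarithmically improved one-dimensional Hardy inequality. Since you explicitly defer the ``sharp-constant bookkeeping'' to \cite{BM}, the proposal is acceptable as a citation plus sketch, but the specific mechanism you assert for preserving $\frac{1}{4}$ is incorrect as stated.
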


\begin{remark}
When $\Omega$ is convex, \cite{BM} also showed that $c < 0$ in \eqref{hardy_main}.
\end{remark}

\begin{corollary} \label{cor_hardy}
The following holds, with constants depending only on $\Omega$ and $\sigma$:
\begin{equation}
\label{hardy_norm_equiv} \| \phi \|_{ H^1 ( \Omega ) }^2 \simeq \| \nabla_\kappa \phi \|_{ L^2 ( \Omega ) }^2 + \| \phi \|_{ L^2 ( \Omega ) }^2 \text{,} \qquad \phi \in H^1_0 ( \Omega ) \text{.}
\end{equation}
\end{corollary}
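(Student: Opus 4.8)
The plan is to reduce \eqref{hardy_norm_equiv} to a single ``twisted'' integration-by-parts identity and then to play the sign of $\sigma$ against the Hardy inequality of Proposition \ref{prop_hardy}. First I would record that for $\phi \in C^\infty_0 ( \Omega )$ one has, since $\phi$ and hence $y^{-\kappa} \phi$ are compactly supported in $\{ y > 0 \} = \Omega$,
\[
\| \nabla_\kappa \phi \|_{ L^2 ( \Omega ) }^2 = - \int_\Omega \phi \, ( \nabla_{ -\kappa } \cdot \nabla_\kappa ) \phi = \| \nabla \phi \|_{ L^2 ( \Omega ) }^2 + \kappa \int_\Omega y^{-1} \Delta y \, \phi^2 - \sigma \int_\Omega y^{-2} | \nabla y |^2 \, \phi^2 \text{,}
\]
where the second equality uses $\nabla_{ -\kappa } \cdot \nabla_\kappa = \Delta - \kappa y^{-1} \Delta y + \sigma y^{-2} | \nabla y |^2$, which follows by equating \eqref{mod_op} with \eqref{AB_sigma} and invoking \eqref{VW_y} and \eqref{singular_op}. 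Now $y = d_\Gamma$ near $\Gamma$, while $y$ is bounded away from $0$ on the compact set $\{ d_\Gamma \geq \delta \}$ and $\Delta y = \Delta d_\Gamma$ is bounded near $\Gamma$ (using $\Gamma \in C^2$); hence $e := \kappa y^{-1} \Delta y - \sigma ( y^{-2} | \nabla y |^2 - d_\Gamma^{-2} )$ lies in $L^\infty ( \Omega )$, with norm depending only on $\Omega$ and $\sigma$, and the identity becomes
\[
\| \nabla_\kappa \phi \|_{ L^2 ( \Omega ) }^2 = \| \nabla \phi \|_{ L^2 ( \Omega ) }^2 - \sigma \int_\Omega d_\Gamma^{-2} \phi^2 + \int_\Omega e \, \phi^2 \text{,} \qquad \phi \in C^\infty_0 ( \Omega ) \text{.}
\]
This extends to all $\phi \in H^1_0 ( \Omega )$ by density: if $C^\infty_0 ( \Omega ) \ni \phi_k \to \phi$ in $H^1 ( \Omega )$, then Proposition \ref{prop_hardy} applied to $\phi_k - \phi_\ell$ shows $d_\Gamma^{-1} \phi_k$ is Cauchy in $L^2 ( \Omega )$, so every term above converges.

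With the identity in hand, the equivalence follows by a short case analysis on the sign of $\sigma$, using the Hardy bound $\int_\Omega d_\Gamma^{-2} \phi^2 \leq 4 \| \nabla \phi \|_{ L^2 ( \Omega ) }^2 + 4 | c | \, \| \phi \|_{ L^2 ( \Omega ) }^2$ from \eqref{hardy_main}. For $\| \nabla_\kappa \phi \|_{ L^2 ( \Omega ) }^2 \lesssim \| \phi \|_{ H^1 ( \Omega ) }^2$: when $\sigma \geq 0$ the term $- \sigma \int_\Omega d_\Gamma^{-2} \phi^2$ is $\leq 0$ and is discarded, and when $\sigma < 0$ it is $\leq 4 | \sigma | \, \| \nabla \phi \|_{ L^2 ( \Omega ) }^2 + C \| \phi \|_{ L^2 ( \Omega ) }^2$ by Hardy. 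For the reverse bound $\| \nabla \phi \|_{ L^2 ( \Omega ) }^2 \lesssim \| \nabla_\kappa \phi \|_{ L^2 ( \Omega ) }^2 + \| \phi \|_{ L^2 ( \Omega ) }^2$, rewrite the identity as $\| \nabla \phi \|_{ L^2 ( \Omega ) }^2 = \| \nabla_\kappa \phi \|_{ L^2 ( \Omega ) }^2 + \sigma \int_\Omega d_\Gamma^{-2} \phi^2 - \int_\Omega e \, \phi^2$: when $\sigma \leq 0$ the singular term has the favorable sign and is dropped, and when $0 < \sigma < \tfrac14$ (Assumption \ref{ass.sigma}) one estimates $\sigma \int_\Omega d_\Gamma^{-2} \phi^2 \leq 4 \sigma \| \nabla \phi \|_{ L^2 ( \Omega ) }^2 + C \| \phi \|_{ L^2 ( \Omega ) }^2$ and absorbs $4 \sigma \| \nabla \phi \|_{ L^2 ( \Omega ) }^2$ into the left-hand side, which is legitimate precisely because $4 \sigma < 1$. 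All implicit constants depend only on $\Omega$ (through $c$ and $\| e \|_{ L^\infty ( \Omega ) }$) and on $\sigma$ (through the factors $1 - 4 \sigma$, $1 + 4 | \sigma |$), and combining both bounds with $\| \phi \|_{ H^1 ( \Omega ) }^2 \simeq \| \nabla \phi \|_{ L^2 ( \Omega ) }^2 + \| \phi \|_{ L^2 ( \Omega ) }^2$ gives \eqref{hardy_norm_equiv}.

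I expect the only genuine obstacle to be the approximation step, namely passing the identity from $C^\infty_0 ( \Omega )$ to $H^1_0 ( \Omega )$ in spite of the fact that $d_\Gamma^{-2} \phi^2$ is a priori only conditionally integrable; this is settled by the Hardy inequality itself, which furnishes the boundedness (hence continuity) of $\phi \mapsto d_\Gamma^{-1} \phi$ from $H^1_0 ( \Omega )$ into $L^2 ( \Omega )$. The remaining pieces --- the algebraic identity for $\nabla_{ -\kappa } \cdot \nabla_\kappa$, the verification that $e \in L^\infty ( \Omega )$ (which uses only $\Gamma \in C^2$ and $y \in C^2 ( \Omega )$ bounded away from $0$ on $\{ d_\Gamma \geq \delta \}$), and the sign bookkeeping --- are routine.
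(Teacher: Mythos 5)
Your overall strategy is the paper's: expand $\|\nabla_\kappa\phi\|_{L^2(\Omega)}^2$ by the twisted integration by parts for $\phi\in C^\infty_0(\Omega)$, isolate the singular term $-\sigma\int_\Omega d_\Gamma^{-2}\phi^2$, control it with Proposition \ref{prop_hardy}, absorb using $4\sigma<1$, and conclude by density (your remark that Hardy makes $\phi\mapsto d_\Gamma^{-1}\phi$ continuous on $H^1_0(\Omega)$, so the identity passes to the limit, is exactly the right justification for the approximation step). However, there is one genuinely false claim: $e:=\kappa y^{-1}\Delta y-\sigma(y^{-2}|\nabla y|^2-d_\Gamma^{-2})$ is \emph{not} in $L^\infty(\Omega)$ in general. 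The second piece does vanish near $\Gamma$ and is bounded in the interior, but the first piece behaves like $\kappa\,d_\Gamma^{-1}\Delta d_\Gamma$ near $\Gamma$, and $\Delta d_\Gamma$ does not vanish at the boundary (it tends to a multiple of the mean curvature of $\Gamma$). So $e=O(d_\Gamma^{-1})$, and your boundedness of $\Delta y$ near $\Gamma$ does not cancel the unbounded factor $y^{-1}$. Consequently the "sign bookkeeping" that implicitly estimates $|\int_\Omega e\,\phi^2|\leq\|e\|_{L^\infty}\|\phi\|_{L^2}^2$ does not go through as written.

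The repair is short and is precisely what the paper does: bound $|e|\lesssim y^{-1}$, so that by Hardy and Young's inequality
\[
\Big|\int_\Omega e\,\phi^2\Big|\lesssim\|y^{-1}\phi\|_{L^2(\Omega)}\|\phi\|_{L^2(\Omega)}\leq\delta\|\nabla\phi\|_{L^2(\Omega)}^2+C_\delta\|\phi\|_{L^2(\Omega)}^2
\]
for any $0<\delta\ll 1$. In the direction $\|\nabla\phi\|_{L^2(\Omega)}^2\lesssim\|\nabla_\kappa\phi\|_{L^2(\Omega)}^2+\|\phi\|_{L^2(\Omega)}^2$ you must then absorb $(4\max(\sigma,0)+\delta)\|\nabla\phi\|_{L^2(\Omega)}^2$ rather than just $4\sigma\|\nabla\phi\|_{L^2(\Omega)}^2$, which is still possible since $4\sigma<1$ strictly and $\delta$ is at your disposal; the easier direction is unaffected since $\|y^{-1}\phi\|_{L^2(\Omega)}\lesssim\|\phi\|_{H^1(\Omega)}$. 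With this one-line correction your argument coincides with the paper's proof.
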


\begin{proof}
Assume first $\phi \in C^\infty_0 ( \Omega )$.
By \eqref{mod_op}--\eqref{VW_y} and an integration by parts,
\begin{align*}
\int_\Omega | \nabla_\kappa \phi |^2 &= - \int_\Omega ( \phi \nabla_{ -\kappa } \cdot \nabla_\kappa \phi ) \\
&= - \int_\Omega \phi ( \Delta + \sigma d_\Gamma^{-2} ) \phi + \int_\Omega \phi [ \kappa y^{-1} \Delta y + \sigma ( d_\Gamma^{-2} - y^{-2} | \nabla y |^2 ) ]\phi \\
&\geq \int_\Omega | \nabla \phi |^2 - \sigma \int_\Omega d_\Gamma^{-2} \phi^2 - C \| y^{-1} \phi \|_{ L^2 ( \Omega ) } \| \phi \|_{ L^2 ( \Omega ) } \text{,}
\end{align*}
for some $C > 0$ depending on $\Omega$, $\sigma$.
Applying \eqref{hardy_main} to the above then yields
\begin{align*}
\| \nabla_\kappa \phi \|_{ L^2 ( \Omega ) }^2 &\geq [ 1 - 4 \max( \sigma, 0 ) - \delta ] \| \nabla \phi \|_{ L^2 ( \Omega ) }^2 - C \| \phi \|_{ L^2 ( \Omega ) }^2 \\
&\geq c \| \nabla \phi \|_{ L^2 ( \Omega ) }^2 - C \| \phi \|_{ L^2 ( \Omega ) }^2 \text{,}
\end{align*}
again for some constants $c, C > 0$ depending on $\Omega$, $\sigma$ and any $0 < \delta \ll 1$.
(Note that the last step follows by the assumption $4 \sigma < 1$ and by a choice of a sufficiently small $\delta$.)
In particular, the above yields half of \eqref{hardy_norm_equiv}.

The remaining part of \eqref{hardy_norm_equiv} also follows from \eqref{hardy_main} but is easier:
\begin{align*}
\| \nabla_\kappa \phi \|_{ L^2 ( \Omega ) } + \| \phi \|_{ L^2 ( \Omega ) } &\lesssim \| \nabla \phi \|_{ L^2 ( \Omega ) } + \| y^{-1} \phi \|_{ L^2 ( \Omega ) } \\
&\lesssim \| \phi \|_{ H^1 ( \Omega ) } \text{.}
\end{align*}
Finally, \eqref{hardy_norm_equiv} for $\phi \in H^1_0 ( \Omega )$ follows via a standard approximation argument.
\end{proof}

\begin{proposition} \label{dirichlet_vanish}
If $\phi \in H^1_0 ( \Omega )$, then the following hold:
\begin{itemize}
\item $\mc{D}_\sigma \phi$ is well-defined in $L^2 ( \Gamma )$, and $\mc{D}_\sigma \phi \equiv 0$---in other words,
\begin{equation}
\label{dirichlet_zero} \lim_{ \delta \searrow 0 } \| \eta^\delta ( y^{ -\kappa } \phi ) \|_{ L^2 ( \Gamma ) } = 0 \text{.}
\end{equation}

\item The following estimate holds, with the constant depending on $\Omega$:
\begin{equation}
\label{dirichlet_extra} \limsup_{ \delta \searrow 0 } \int_{ \{ y = \delta \} } y^{-1} \phi^2 \lesssim \| \phi \|_{ H^1 ( \Omega ) }^2 \text{.}
\end{equation}
\end{itemize}
\end{proposition}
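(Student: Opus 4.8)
I would deduce both claims from a single estimate on the level sets $\{ y = \delta \}$, obtained by integrating $\nabla \phi$ along the integral curves of $\nabla y$ emanating from $\Gamma$. First I fix $\delta_0 > 0$ small enough that $y \equiv d_\Gamma$ and $| \nabla y | \equiv 1$ on the collar $U := \{ 0 < y < \delta_0 \}$, and I let $\Phi : (0, \delta_0) \times \Gamma \to U$, $\Phi ( s, x ) := x_s$, denote the $C^1$-diffeomorphism built from the $\nabla y$-flow as in Definition \ref{traces}. Since $y \in C^2$, both the Jacobian of $\Phi$ and the density $J_\delta \in C^0 ( \Gamma )$ relating the surface measure of $\{ y = \delta \}$ to that of $\Gamma$ via $\Phi ( \delta, \cdot )$ are bounded above and below uniformly for $\delta \in (0, \delta_0)$, and converge to $1$ as $\delta \searrow 0$. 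Because $\phi \in H^1_0 ( \Omega )$, after passing to the ACL representative (in the coordinates straightened by $\Phi$) the map $s \mapsto \phi ( x_s )$ is absolutely continuous on $(0, \delta_0)$ for a.e.\ $x \in \Gamma$; moreover, the vanishing Sobolev trace of $\phi$ means $\eta^\epsilon \phi \to 0$ in $L^2 ( \Gamma )$, hence $\phi ( x_\epsilon ) \to 0$ along a subsequence, and combined with absolute continuity this forces $\phi ( x_s ) \to 0$ as $s \searrow 0$ for a.e.\ $x$.

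Consequently, for a.e.\ $x \in \Gamma$ and every $0 < \delta < \delta_0$, the fundamental theorem of calculus and Cauchy--Schwarz give
\[
\phi ( x_\delta )^2 = \Big( \int_0^\delta ( \nabla y \cdot \nabla \phi ) ( x_s ) \, ds \Big)^2 \leq \delta \int_0^\delta | \nabla \phi ( x_s ) |^2 \, ds \text{,}
\]
using $| \nabla y | = 1$ on $U$. Integrating over $\Gamma$ and changing variables via $\Phi$, I obtain the core estimate
\[
\int_\Gamma \phi ( x_\delta )^2 \, d\sigma ( x ) \lesssim \delta \int_{ \{ 0 < y < \delta \} } | \nabla \phi |^2 \text{,} \qquad 0 < \delta < \delta_0 \text{,}
\]
with the implied constant depending only on $\Omega$. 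To conclude, I use that $y \equiv \delta$ on $\{ y = \delta \}$. For \eqref{dirichlet_zero},
\[
\| \eta^\delta ( y^{ -\kappa } \phi ) \|_{ L^2 ( \Gamma ) }^2 = \delta^{ -2\kappa } \int_\Gamma \phi ( x_\delta )^2 \, J_\delta ( x ) \, d\sigma ( x ) \lesssim \delta^{ 1 - 2\kappa } \| \phi \|_{ H^1 ( \Omega ) }^2 \text{,}
\]
which tends to $0$ as $\delta \searrow 0$ since $\kappa < \tfrac12$; in particular $\mc{D}_\sigma \phi$ is well-defined in $L^2 ( \Gamma )$ and vanishes. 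For \eqref{dirichlet_extra}, the same computation with the weight $y^{-1}$ gives
\[
\int_{ \{ y = \delta \} } y^{-1} \phi^2 = \delta^{-1} \int_\Gamma \phi ( x_\delta )^2 \, J_\delta ( x ) \, d\sigma ( x ) \lesssim \int_{ \{ 0 < y < \delta \} } | \nabla \phi |^2 \leq \| \phi \|_{ H^1 ( \Omega ) }^2 \text{,}
\]
uniformly in $\delta \in (0, \delta_0)$, and taking $\limsup_{ \delta \searrow 0 }$ yields \eqref{dirichlet_extra} (the $\limsup$ is in fact $0$, since $\int_{ \{ 0 < y < \delta \} } | \nabla \phi |^2 \to 0$).

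The one genuinely delicate point is the justification of the fundamental theorem of calculus along the $\nabla y$-integral curves for a non-smooth $\phi \in H^1_0 ( \Omega )$: one must select a representative that is absolutely continuous along a.e.\ such curve, and verify that the zero-trace condition propagates to vanishing of the curve restrictions at the boundary endpoint. Both facts are standard after straightening the foliation by $\Phi$ --- the first is the ACL characterization of Sobolev functions, and the second follows from the $L^2 ( \Gamma )$-convergence $\eta^\epsilon \phi \to 0$ together with the already-established existence of the endpoint limit. It is worth stressing that one cannot shortcut this via density: approximating $\phi$ by $\phi_k \in C^\infty_0 ( \Omega )$ (for which both sides of the core estimate vanish once $\delta$ is small) would require controlling $\int_{ \{ y = \delta \} } y^{-1} ( \phi - \phi_k )^2$ uniformly in $\delta$, which is precisely \eqref{dirichlet_extra} applied to $\phi - \phi_k$; so the trace/ACL argument cannot be bypassed.
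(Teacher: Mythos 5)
Your proof is correct, but it follows a genuinely different route from the paper's. The paper obtains \eqref{dirichlet_extra} in two lines by applying the divergence theorem to the vector field $y^{-1} \nabla y \, \phi^2$ on $\{ y > \delta \}$ and then invoking the Hardy inequality \eqref{hardy_main} to control $\| y^{-1} \phi \|_{ L^2 ( \Omega ) }$ by $\| \phi \|_{ H^1 ( \Omega ) }$; the deduction of \eqref{dirichlet_zero} from \eqref{dirichlet_extra} via the factor $y^{ 1 - 2 \kappa }$ is then identical to yours. Your argument instead proves the localized trace inequality $\int_\Gamma \phi ( x_\delta )^2 \, d\sigma \lesssim \delta \int_{ \{ 0 < y < \delta \} } | \nabla \phi |^2$ directly, by the fundamental theorem of calculus along the $\nabla y$-curves applied to the ACL representative, together with the vanishing of the endpoint limits. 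What this buys you is a self-contained proof that does not rely on the Brezis--Marcus inequality, plus a strictly sharper conclusion (the $\limsup$ in \eqref{dirichlet_extra} is in fact $0$, since the right-hand side localizes to a collar of width $\delta$); the price is the care you rightly flag about pointwise representatives, absolute continuity up to the endpoint $s = 0$, and the identification of the curve-wise limits with the (zero) Sobolev trace. Two small remarks: your closing claim that the ACL argument ``cannot be bypassed'' is overstated---the paper's divergence-theorem-plus-Hardy computation is precisely such a bypass, yielding the uniform-in-$\delta$ bound without ever selecting a representative (and, if you prefer to avoid the $L^2(\Gamma)$-trace-convergence input, the Hardy inequality plus Fubini already forces $\liminf_{ s \searrow 0 } | \phi ( x_s ) | = 0$ for a.e.\ $x$, which with your absolute continuity identifies the limit as $0$); and your insertion of the density $J_\delta$ in $\| \eta^\delta ( y^{-\kappa} \phi ) \|_{ L^2 ( \Gamma ) }^2$ is not in the paper's definition \eqref{traces_space} of $\eta^\delta$ (which is a pure pullback to $\Gamma$), though since $J_\delta \simeq 1$ this is harmless.
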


\begin{proof}
Given $0 < \delta \ll 1$, we apply the divergence theorem and \eqref{hardy_main} to obtain
\begin{align*}
\int_{ \{ y = \delta \} } y^{-1} \phi^2 &= \int_{ \{ y > \delta \} } \nabla \cdot ( y^{-1} \nabla y \, \phi^2 ) \\
&\lesssim \| y^{-1} \phi \|_{ L^2 ( \Omega ) } ( \| \nabla y \|_{ L^2 ( \Omega ) } + \| y^{-1} \phi \|_{ L^2 ( \Omega ) } ) \\
&\lesssim \| \phi \|_{ H^1 ( \Omega ) }^2 \text{,}
\end{align*}
where we also recalled that $y = d_\Gamma$ near $\Gamma$; the above yields the bound \eqref{dirichlet_extra}.
Since $2 \kappa < 1$, then \eqref{dirichlet_extra} also implies both \eqref{dirichlet_zero} and $\mc{D}_\sigma \phi = 0$, since
\begin{align*}
\| \eta^\delta ( y^{ -\kappa } \phi ) \|_{ L^2 ( \Gamma ) }^2 &\lesssim \int_{ \{ y = \delta \} } y^{ 1 - 2 \kappa } \cdot y^{-1} \phi^2 \\
&\lesssim \delta^{ 1 - 2 \kappa } \| \phi \|_{ H^1 ( \Omega ) }^2 \text{.} \qedhere
\end{align*}
\end{proof}

\begin{remark}
Note in particular that $\sigma = \frac{1}{4}$ ($\kappa = \frac{1}{2}$) represents the threshold at which $H^1_0 ( \Omega )$ no longer adequately captures vanishing Dirichlet trace.
\end{remark}

Finally, we recall from \cite{new} a pointwise extension of the Hardy inequality \eqref{hardy_main} that will be essential to our upcoming Carleman estimate:

\begin{proposition} \label{prop_hardy_gen}
Let $\phi \in H^1_{ \mathrm{loc} } ( \Omega )$ and $q \in \R$.
Then, almost everywhere on $\Omega$,
\begin{align}
\label{hardy_gen} y^{2q} ( D_y \phi )^2 &\geq \nabla \cdot \big[ \tfrac{1}{2} ( 1 - 2q ) y^{ 2q - 1 } \nabla y | \nabla y |^2 \, \phi^2 \big] + \tfrac{1}{4} (1 - 2q )^2 y^{ 2q - 2 } | \nabla y |^4 \, \phi^2 \\
\notag &\qquad - \tfrac{1}{2} ( 1 - 2q ) y^{ 2q - 1 } [ \Delta y | \nabla y |^2 + 2 ( \nabla y \cdot \nabla^2 y \cdot \nabla y ) ] \, \phi^2 \text{.}
\end{align}
\end{proposition}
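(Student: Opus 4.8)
The plan is to derive \eqref{hardy_gen} by completing a square and then rewriting the resulting cross term as a perfect divergence plus lower-order terms; this is a pointwise (a.e.) identity, so no integration is involved. First I would start from the manifestly nonnegative quantity
\[
y^{2q} \Big( D_y \phi - \tfrac{1}{2} ( 1 - 2q ) \, y^{-1} | \nabla y |^2 \, \phi \Big)^2 \geq 0 \text{,}
\]
which makes sense a.e.\ on $\Omega$ since $y > 0$ there and $\phi \in H^1_{\mathrm{loc}} ( \Omega )$. Expanding the square produces $y^{2q} ( D_y \phi )^2$, the cross term $-( 1 - 2q ) \, y^{2q-1} | \nabla y |^2 \, \phi \, D_y \phi$, and the nonnegative term $\tfrac{1}{4} ( 1 - 2q )^2 \, y^{2q-2} | \nabla y |^4 \, \phi^2$; rearranging yields a pointwise lower bound for $y^{2q}(D_y\phi)^2$ in which the only term not already of the desired form is the cross term.

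Next I would convert the cross term into divergence form. Since $\phi^2 \in W^{1,1}_{\mathrm{loc}} ( \Omega )$ with $\nabla ( \phi^2 ) = 2 \phi \nabla \phi$, one has $\phi \, D_y \phi = \tfrac{1}{2} \nabla y \cdot \nabla ( \phi^2 )$, and since $y \in C^2 ( \Omega )$ with $y > 0$ the vector field $y^{2q-1} | \nabla y |^2 \nabla y$ is $C^1$ on $\Omega$; hence
\[
\tfrac{1}{2} ( 1 - 2q ) \, y^{2q-1} | \nabla y |^2 \, \nabla y \cdot \nabla ( \phi^2 ) = \nabla \cdot \big[ \tfrac{1}{2} ( 1 - 2q ) \, y^{2q-1} | \nabla y |^2 \nabla y \, \phi^2 \big] - \tfrac{1}{2} ( 1 - 2q ) \, \phi^2 \, \nabla \cdot \big( y^{2q-1} | \nabla y |^2 \nabla y \big) \text{.}
\]
Expanding the remaining divergence by the product rule, together with the identity $\nabla ( | \nabla y |^2 ) \cdot \nabla y = 2 \, ( \nabla y \cdot \nabla^2 y \cdot \nabla y )$, produces $( 2q - 1 ) \, y^{2q-2} | \nabla y |^4 + y^{2q-1} [ | \nabla y |^2 \Delta y + 2 ( \nabla y \cdot \nabla^2 y \cdot \nabla y ) ]$. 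Substituting back, the divergence term and the Hessian terms appear exactly as on the right-hand side of \eqref{hardy_gen}, while the two multiples of $y^{2q-2} | \nabla y |^4 \phi^2$ — one from completing the square, one from the expansion of $\nabla \cdot ( y^{2q-1} | \nabla y |^2 \nabla y )$ — combine, using $( 1 - 2q )( 2q - 1 ) = - ( 1 - 2q )^2$, into $\tfrac{1}{2} ( 1 - 2q )^2 - \tfrac{1}{4} ( 1 - 2q )^2 = \tfrac{1}{4} ( 1 - 2q )^2$, matching the stated coefficient.

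I do not anticipate a serious obstacle: the argument is entirely algebraic once the starting square is identified, and that choice is forced by requiring the cross term to be $\nabla y \cdot \nabla ( \phi^2 )$ up to a scalar factor. The two points deserving a line of justification are that all products are well-defined a.e.\ and that the divergence on the right-hand side is the distributional one computed above (immediate from $\phi^2 \in W^{1,1}_{\mathrm{loc}}$ and the $C^1$ regularity of the vector field), and the sign bookkeeping when the two $y^{2q-2} | \nabla y |^4 \phi^2$ contributions are merged, which is the only place a slip is likely. (Alternatively, one may simply quote this pointwise inequality from \cite{new}, where it already appears.)
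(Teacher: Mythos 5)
Your proposal is correct and follows essentially the same route as the paper: the paper also starts from the nonnegativity of $( y^q D_y \phi + b\, y^{q-1} |\nabla y|^2 \phi )^2$, expands, rewrites the cross term via the product rule for $\nabla \cdot ( b\, y^{2q-1} \nabla y |\nabla y|^2 \phi^2 )$, and then takes the optimal value $2b = 2q-1$, which is exactly your choice $b = -\tfrac{1}{2}(1-2q)$. Your coefficient bookkeeping ($\tfrac{1}{2}(1-2q)^2 - \tfrac{1}{4}(1-2q)^2 = \tfrac{1}{4}(1-2q)^2$) checks out, so there is nothing to add.
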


\begin{proof}
A direct computation yields, for any $b, q \in \R$, the inequality
\begin{align*}
0 &\leq ( y^q \, D_y \phi + b y^{ q - 1 } | \nabla y |^2 \, \phi )^2 \\
&= y^{ 2q } \, ( D_y \phi )^2 + b ( b - 2q + 1 ) y^{ 2q - 2 } | \nabla y |^4 \, \phi^2 - 2 b y^{ 2q - 1 } ( \nabla y \cdot \nabla^2 y \cdot \nabla y ) \, \phi^2 \\
&\qquad - b y^{ 2q - 1 } \Delta y | \nabla y |^2 \, \phi^2 + \nabla \cdot ( b y^{ 2q - 1 } \nabla y | \nabla y |^2 \, \phi^2 ) \text{.}
\end{align*}
Taking the optimal value $2b := 2q - 1$ in the above yields \eqref{hardy_gen}.
\end{proof}

\subsection{Elliptic properties}

The next step is to derive various elliptic properties for the operator $-A_\sigma$ (or equivalently, $-B_\sigma$) from Definition \ref{op}:

\begin{proposition} \label{prop_resolvent}
There exist $\gamma \geq 0$, $c > 0$ (depending on $\Omega$, $\sigma$, $X$, $V$) such that:
\begin{itemize}
\item The operator $\lambda I - A_\sigma: \mf{D} ( A_\sigma ) \rightarrow L^2 ( \Omega )$ is invertible for any $\lambda > \gamma$.

\item The following estimate holds for any $\lambda > \gamma$ and $f \in L^2 ( \Omega )$:
\begin{equation}
\label{resolvent} c \| \nabla ( \lambda I - A_\sigma )^{-1} f \|_{ L^2 ( \Omega ) } + ( \lambda - \gamma ) \| ( \lambda I - A_\sigma )^{-1} f \|_{ L^2 ( \Omega ) } \leq \| f \|_{ L^2 ( \Omega ) } \text{.}
\end{equation}
\end{itemize}
\end{proposition}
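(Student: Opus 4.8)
The plan is to establish the claimed resolvent estimate and invertibility via the Lax--Milgram theorem applied to the bilinear form naturally associated with $-A_\sigma$, using the twisted formulation \eqref{mod_op} and Corollary \ref{cor_hardy} to control the singular potential. First I would introduce, for $\phi, \psi \in H^1_0(\Omega)$, the bilinear form
\begin{equation*}
\mathcal{B}_\lambda(\phi, \psi) := \int_\Omega \nabla_\kappa \phi \cdot \nabla_{-\kappa} \psi - \int_\Omega (X \cdot \nabla \phi)\, \psi - \int_\Omega V_y\, \phi\, \psi + \lambda \int_\Omega \phi\, \psi \text{,}
\end{equation*}
which, by \eqref{mod_op} and an integration by parts (legitimate first on $C^\infty_0(\Omega)$ and then extended by density using Corollary \ref{cor_hardy}), is the weak form of $(\lambda I - A_\sigma)\phi = f$ tested against $\psi$. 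Note that $\nabla_\kappa \phi \cdot \nabla_{-\kappa}\psi = (\nabla\phi - \kappa y^{-1}\nabla y\,\phi)\cdot(\nabla\psi + \kappa y^{-1}\nabla y\,\psi)$, so expanding this and using that $y = d_\Gamma$ near $\Gamma$ together with $d_\Gamma W \in L^\infty$ and $d_\Gamma^{-1}\phi \in L^2$ (from \eqref{hardy_main}) shows $\mathcal{B}_\lambda$ is bounded on $H^1_0(\Omega) \times H^1_0(\Omega)$.

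The key step is coercivity. Testing against $\psi = \phi$, the term $\int_\Omega \nabla_\kappa\phi\cdot\nabla_{-\kappa}\phi = \int_\Omega |\nabla_\kappa\phi|^2 - \kappa\int_\Omega y^{-1}\nabla y \cdot \nabla(\phi^2) + (\text{lower order})$; after an integration by parts on the middle term and absorbing the remainder, Corollary \ref{cor_hardy} gives $\int_\Omega \nabla_\kappa\phi\cdot\nabla_{-\kappa}\phi \gtrsim \|\nabla\phi\|_{L^2}^2 - C\|\phi\|_{L^2}^2$. The first-order term $\int_\Omega (X\cdot\nabla\phi)\phi$ is handled by Young's inequality, $|\int_\Omega(X\cdot\nabla\phi)\phi| \leq \delta\|\nabla\phi\|_{L^2}^2 + C_\delta\|X\|_{L^\infty}^2\|\phi\|_{L^2}^2$, while $\int_\Omega V_y\phi^2$ requires the Hardy inequality again: writing $V_y\phi^2 = (d_\Gamma V_y)(d_\Gamma^{-1}\phi)\phi$ and using $d_\Gamma^{-1}\phi \in L^2$ with norm controlled by $\|\nabla\phi\|_{L^2} + \|\phi\|_{L^2}$ lets me bound it by $\delta\|\nabla\phi\|_{L^2}^2 + C_\delta\|\phi\|_{L^2}^2$. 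Collecting everything, there is $\gamma \geq 0$ and $c_0 > 0$ with $\mathcal{B}_\lambda(\phi,\phi) \geq c_0\|\nabla\phi\|_{L^2}^2 + (\lambda - \gamma)\|\phi\|_{L^2}^2$ for all $\lambda > \gamma$, which for such $\lambda$ is coercive on $H^1_0(\Omega)$ (using also the Poincaré inequality to dominate $\|\phi\|_{H^1}^2$). Lax--Milgram then yields, for each $f \in L^2(\Omega)$, a unique $\phi \in H^1_0(\Omega)$ with $\mathcal{B}_\lambda(\phi,\psi) = \int_\Omega f\psi$ for all $\psi$; testing against $\psi \in C^\infty_0(\Omega)$ shows $(\lambda I - A_\sigma)\phi = f$ distributionally, hence $A_\sigma\phi = \lambda\phi - f \in L^2(\Omega)$, so $\phi \in \mathfrak{D}(A_\sigma)$ and $\lambda I - A_\sigma$ is invertible.

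For the quantitative bound \eqref{resolvent}, I would simply take $\psi = \phi = (\lambda I - A_\sigma)^{-1}f$ in the identity $\mathcal{B}_\lambda(\phi,\phi) = \int_\Omega f\phi$ and combine with the coercivity inequality: $c_0\|\nabla\phi\|_{L^2}^2 + (\lambda-\gamma)\|\phi\|_{L^2}^2 \leq \|f\|_{L^2}\|\phi\|_{L^2}$. This immediately gives $(\lambda-\gamma)\|\phi\|_{L^2} \leq \|f\|_{L^2}$, and then $c_0\|\nabla\phi\|_{L^2}^2 \leq \|f\|_{L^2}\|\phi\|_{L^2} \leq (\lambda-\gamma)^{-1}\|f\|_{L^2}^2$; a short rearrangement (using $\lambda - \gamma$ may be small, so one keeps the $\|f\|\|\phi\|$ form) produces $c\|\nabla\phi\|_{L^2} \leq \|f\|_{L^2}$ with $c$ depending only on $c_0$, and adding the two estimates yields \eqref{resolvent}. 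I expect the main obstacle to be the coercivity step — specifically, verifying that all the potentially dangerous terms arising from the cross-terms in $\nabla_\kappa\phi\cdot\nabla_{-\kappa}\phi$ and from $V_y$ are genuinely subcritical relative to $d_\Gamma^{-2}$ and can be absorbed, which hinges on the strict inequality $4\sigma < 1$ propagating through Corollary \ref{cor_hardy} with a favorable constant; the first-order and lower-order coefficient contributions, by contrast, are routine.
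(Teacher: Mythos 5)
Your overall strategy coincides with the paper's proof: associate a bilinear form to $\lambda I - A_\sigma$ via the twisted form \eqref{mod_op}, prove coercivity on $H^1_0(\Omega)$ using the Hardy inequality (Corollary \ref{cor_hardy}), invoke Lax--Milgram, and recover \eqref{resolvent} by testing with $\psi = \phi$. However, there is a concrete error in your setup of the bilinear form. The weak formulation of $\nabla_{-\kappa}\cdot\nabla_\kappa$ is \emph{symmetric} in $\kappa$: since $\nabla_{-\kappa}\cdot V = y^{-\kappa}\nabla\cdot(y^{\kappa}V)$, one has for $\phi, \psi$ compactly supported
\begin{equation*}
\int_\Omega \psi \, \nabla_{-\kappa}\cdot\nabla_\kappa\phi = -\int_\Omega \nabla( y^{-\kappa}\psi)\cdot y^{2\kappa}\nabla( y^{-\kappa}\phi ) = -\int_\Omega \nabla_\kappa\psi\cdot\nabla_\kappa\phi \text{,}
\end{equation*}
so the correct principal part is $\int_\Omega \nabla_\kappa\phi\cdot\nabla_\kappa\psi$ (this is the form \eqref{elliptic_bilinear} used in the paper), \emph{not} $\int_\Omega\nabla_\kappa\phi\cdot\nabla_{-\kappa}\psi$. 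As written, your $\mathcal{B}_\lambda(\phi,\psi)=\int f\psi$ is the weak formulation of a different operator, so the function produced by Lax--Milgram would not satisfy $(\lambda I - A_\sigma)\phi = f$, and the invertibility claim would fail for $A_\sigma$ itself.

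Relatedly, your expansion of the diagonal term is incorrect: the cross terms in $\nabla_\kappa\phi\cdot\nabla_{-\kappa}\phi = (\nabla\phi - \kappa y^{-1}\nabla y\,\phi)\cdot(\nabla\phi + \kappa y^{-1}\nabla y\,\phi)$ cancel exactly, leaving $|\nabla\phi|^2 - \kappa^2 y^{-2}|\nabla y|^2\phi^2$; the discarded piece is critically singular of order $y^{-2}$ and is emphatically not ``lower order,'' so the identity you state with a $-\kappa\int y^{-1}\nabla y\cdot\nabla(\phi^2)$ correction is wrong. Once the form is corrected to carry $|\nabla_\kappa\phi|^2$ on the diagonal, no such expansion is needed: Corollary \ref{cor_hardy} gives $\|\nabla_\kappa\phi\|_{L^2}^2 \gtrsim \|\phi\|_{H^1}^2 - C\|\phi\|_{L^2}^2$ directly, and your treatment of the $X$- and $V_y$-terms, the Lax--Milgram step, and the derivation of \eqref{resolvent} from $c\|\phi\|_{H^1}^2 + (\lambda-\gamma)\|\phi\|_{L^2}^2 \leq \|f\|_{L^2}\|\phi\|_{L^2}$ then go through exactly as in the paper.
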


\begin{proof}
First, by \eqref{mod_op}--\eqref{VW_y}, we can associate to $-A_\sigma$ the bilinear form
\begin{equation}
\label{elliptic_bilinear} \mc{B}_\sigma ( \phi, \psi ) := \int_\Omega [ \nabla_\kappa \phi \cdot \nabla_\kappa \psi - ( X \cdot \nabla \phi ) \, \psi - V_y \, \phi \psi ] \text{,}
\end{equation}
defined for $\phi, \psi \in H^1_0 ( \Omega )$.
By Definition \ref{def.lower_order}, \eqref{hardy_main}, and \eqref{hardy_norm_equiv}, we have
\begin{align}
\label{elliptic_energy} \mc{B}_\sigma ( \phi, \phi ) &= \| \nabla_\kappa \phi \|_{ L^2 ( \Omega ) }^2 - C \| \phi \|_{ H^1 ( \Omega ) } \| \phi \|_{ L^2 ( \Omega ) } \\
\notag &\geq c \| \phi \|_{ H^1 ( \Omega ) }^2 - \gamma \| \phi \|_{ L^2 ( \Omega ) }^2
\end{align}
for any $\phi \in H^1_0 ( \Omega )$, where $C > 0$, $c > 0$, and $\gamma \geq 0$ are constants depending on $\Omega$, $\sigma$, $X$, $V$.
Consequently, given any $\lambda > \gamma$, the Lax-Milgram theorem and \eqref{elliptic_energy} yield for any $f \in L^2 ( \Omega )$ a unique $\phi_f \in H^1_0 ( \Omega )$ satisfying
\begin{equation}
\label{elliptic_weak} \lambda \int_\Omega \phi_f \psi + \mc{B}_\sigma ( \phi_f, \psi ) = \int_\Omega f \psi \text{,} \qquad \psi \in H^1_0 ( \Omega ) \text{.}
\end{equation}

Integrating \eqref{elliptic_weak} by parts and taking $\psi \in C^\infty_0 ( \Omega )$ (to remove boundary terms), we see that $f = ( \lambda I - A_\sigma ) \phi_f$.
It also follows that $\lambda I - A_\sigma$ is invertible, since
\[
A_\sigma \phi_f = - f + \lambda \phi_f \in L^2 ( \Omega ) \text{,} \qquad \phi_f \in \mf{D} ( A_\sigma ) \text{.}
\]
Setting $\psi := \phi_f$ in \eqref{elliptic_weak} and recalling \eqref{elliptic_energy}, we obtain
\[
c \| \phi_f \|_{ H^1 ( \Omega ) }^2 + ( \lambda - \gamma ) \| \phi_f \|_{ L^2 ( \Omega ) }^2 \leq \| f \|_{ L^2 ( \Omega ) } \| \phi_f \|_{ L^2 ( \Omega ) } \text{,}
\]
from which the desired bound \eqref{resolvent} follows.
\end{proof}

\begin{proposition} \label{elliptic}
The following holds for any $\phi \in \mf{D} ( A_\sigma )$:
\begin{itemize}
\item $\phi \in H^2_{ \mathrm{loc} } ( \Omega )$, and $\phi$ satisfies the following estimate:
\begin{equation}
\label{elliptic_H2} \| \nabla_{ -\kappa } \nabla_\kappa \phi \|_{ L^2 ( \Omega ) } + \| \nabla \phi \|_{ L^2 ( \Omega ) } \lesssim \| A_\sigma \phi \|_{ L^2 ( \Omega ) } + \| \phi \|_{ L^2 ( \Omega ) } \text{,}
\end{equation}

\item Furthermore, $\mc{N}_\sigma \phi$ is well-defined in $L^2 ( \Gamma )$---that is, $\eta^\delta [ y^{ 2 \kappa } D_y ( y^{ -\kappa } \phi ) ]$ has a limit in $L^2 ( \Gamma )$ as $\delta \searrow 0$---and the following estimate holds:
\begin{equation}
\label{elliptic_neumann} \| \mc{N}_\sigma \phi \|_{ L^2 ( \Gamma ) } \lesssim \| A_\sigma \phi \|_{ L^2 ( \Omega ) } + \| \phi \|_{ L^2 ( \Omega ) } \text{,}
\end{equation}
\end{itemize}
In both inequalities above, the constants depend only on $\Omega$, $\sigma$, $X$, $V$.
\end{proposition}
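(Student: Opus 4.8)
The plan is to combine classical interior elliptic regularity with a boundary analysis carried out in the $\nabla y$-flow coordinates of Definition~\ref{traces}, never differentiating $d_\Gamma$ more than twice (so that $\Gamma \in C^2$ suffices). \emph{The interior is routine:} on any compact $K \subseteq \Omega$ one has $d_\Gamma \geq c_K > 0$, so there $A_\sigma$ is uniformly elliptic with bounded coefficients, and classical local estimates give $\phi \in H^2_{\mathrm{loc}}(\Omega)$. For the global gradient bound, fix $\lambda > \gamma$ as in Proposition~\ref{prop_resolvent}: then $f := \lambda \phi - A_\sigma \phi = (\lambda I - A_\sigma)\phi \in L^2(\Omega)$, and since $\lambda I - A_\sigma$ is injective on $\mf{D}(A_\sigma)$ (by \eqref{resolvent}), $\phi$ coincides with the Lax--Milgram weak solution $\phi_f$ constructed in the proof of Proposition~\ref{prop_resolvent}. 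Feeding $\psi = \phi$ into the weak identity \eqref{elliptic_weak} for $\phi_f$ gives $\mc{B}_\sigma(\phi,\phi) = -\int_\Omega (A_\sigma \phi)\,\phi$, whence \eqref{elliptic_energy} and Cauchy--Schwarz yield $\| \phi \|_{H^1(\Omega)} \lesssim \| A_\sigma \phi \|_{L^2(\Omega)} + \| \phi \|_{L^2(\Omega)}$.

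\emph{Second-order estimate.} Rewriting $A_\sigma = \nabla_{-\kappa}\cdot\nabla_\kappa + X \cdot \nabla + V_y$ via \eqref{mod_op}, the twisted Laplacian is controlled at once, $\| \nabla_{-\kappa}\cdot\nabla_\kappa \phi \|_{L^2} \leq \| A_\sigma \phi \|_{L^2} + \| X \cdot \nabla \phi \|_{L^2} + \| V_y \phi \|_{L^2}$, with $\| X \cdot \nabla \phi \|_{L^2} \lesssim \| \nabla \phi \|_{L^2}$ and, since $d_\Gamma V_y \in L^\infty$, $\| V_y \phi \|_{L^2} \lesssim \| d_\Gamma^{-1} \phi \|_{L^2} \lesssim \| \phi \|_{H^1}$ by the Hardy inequality \eqref{hardy_main}. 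The substantive point is second-order regularity up to $\Gamma$: in the boundary-normal coordinates induced by the $\nabla y$-flow I would run a Nirenberg difference-quotient argument, testing the weak formulation against second tangential difference quotients of $\phi$. The structural fact making this work is that the weight $d_\Gamma^{-2}$ and the powers $y^{\pm\kappa}$ depend only on the normal variable, so their commutators with tangential differences stay controlled and produce no new singular terms; this yields $\| \nabla_\Gamma^2 \phi \|_{L^2} \lesssim \| A_\sigma \phi \|_{L^2} + \| \phi \|_{L^2}$ on a collar of $\Gamma$. The remaining (normal and mixed) components of the twisted Hessian $\nabla_{-\kappa}\nabla_\kappa \phi$ are then recovered algebraically from the twisted Laplacian, the tangential second derivatives, and lower-order terms bounded via \eqref{hardy_main}; together with the interior estimate this is \eqref{elliptic_H2}.

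\emph{Neumann trace.} Put $w := y^{-\kappa}\phi$ and $g := y^{2\kappa} D_y w$ on a collar $\{ 0 < y < \delta_0 \}$ where $y = d_\Gamma$, so that $\eta^\delta(g) = \eta^\delta[ y^{2\kappa} D_y(y^{-\kappa}\phi) ]$, and it suffices to show $\{ \eta^\delta(g) \}$ is Cauchy in $L^2(\Gamma)$ as $\delta \searrow 0$. A direct computation along the $\nabla y$-flow (whose integral curves are $d_\Gamma$-geodesics) gives $D_y g = y^\kappa ( \nabla_{-\kappa}\cdot\nabla_\kappa \phi ) - y^{2\kappa} H\, D_y w - y^{2\kappa} \Delta_\Gamma^{(y)} w$, with $H$ the mean curvature of the level sets and $\Delta_\Gamma^{(y)}$ the induced tangential Laplacian. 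Since $y^{2\kappa} D_y w = y^\kappa D_y \phi - \kappa y^{\kappa-1}\phi$ and $y^{2\kappa} \Delta_\Gamma^{(y)} w = y^\kappa \Delta_\Gamma^{(y)}\phi$, each term on the right is $y^\kappa$ times an $L^2(\Omega)$-function — here I invoke the twisted-Laplacian bound, the tangential $H^2$-bound, $\| \nabla \phi \|_{L^2}$, and $\| d_\Gamma^{-1}\phi \|_{L^2} \lesssim \| \phi \|_{H^1}$. Because $2\kappa > -1$ by \eqref{eq.kappa}, the weight $y^\kappa$ is square-integrable in the normal direction, so integrating $D_y g$ along the flow between two levels and applying Minkowski's and Cauchy--Schwarz's inequalities gives $\| \eta^\delta(g) - \eta^{\delta'}(g) \|_{L^2(\Gamma)} \to 0$ as $\delta, \delta' \searrow 0$, with limit the asserted $\mc{N}_\sigma \phi \in L^2(\Gamma)$. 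Running the same estimate with one endpoint fixed at $\delta_0$ yields $\| \mc{N}_\sigma \phi \|_{L^2(\Gamma)} \lesssim \| \eta^{\delta_0}(g) \|_{L^2(\Gamma)} + \int_0^{\delta_0} \| D_y g(s,\cdot) \|_{L^2(\Gamma)}\, ds$, where the interior trace is bounded using interior $H^2$-regularity and the integral by the bounds just described; this is \eqref{elliptic_neumann}.

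I expect the main obstacle to be the boundary regularity in the second step: pushing $H^2$-control up to $\Gamma$ in the tangential directions despite the inverse-square potential and the merely $C^1$ (resp.\ weighted-$L^\infty$) lower-order coefficients $X$, $V$, while carefully tracking the $y$-power weights. It is precisely here that the standing restriction $-\tfrac12 < \kappa < \tfrac12$, equivalently $-\tfrac34 < \sigma < \tfrac14$, enters, ensuring that the weights $y^{2\kappa}$ remain integrable near $\Gamma$; the scheme parallels that of \cite{new} but is arranged so that $d_\Gamma$ is differentiated at most twice.
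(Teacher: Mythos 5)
Your proposal follows essentially the same route as the paper's proof: interior elliptic regularity plus the resolvent/energy identity for the $H^1$ bound, tangential difference quotients tested against the weak formulation to control the tangential derivatives of $\nabla_\kappa \phi$ (with the normal--normal component of the twisted Hessian then recovered algebraically from the equation), and integration of $D_y [ y^{2\kappa} D_y ( y^{-\kappa}\phi ) ]$ along the $\nabla y$-flow together with the integrability of $y^{2\kappa}$ (i.e.\ $2\kappa > -1$) to construct and bound the Neumann trace. The argument is correct and matches the paper's in all essential respects.
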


\begin{proof}
Letting $\lambda$ be as in Proposition \ref{resolvent}, then the estimate \eqref{resolvent} yields
\begin{align}
\label{elliptic_H1} \| \nabla \phi \|_{ L^2 ( \Omega ) } &\lesssim \| ( \lambda I - A_\sigma ) \phi \|_{ L^2 ( \Omega ) } \\
\notag &\lesssim \| A_\sigma \phi \|_{ L^2 ( \Omega ) } + \| \phi \|_{ L^2 ( \Omega ) } \text{.}
\end{align}
In addition, since the coefficients of $A_\sigma$ are bounded on any compact subset of $\Omega$, then standard interior elliptic regularity (see, e.g., \cite{Evans, Krylov} and references therein) implies $\phi \in H^2_{ \mathrm{loc} } ( \Omega )$.
Therefore, we need only to bound $\nabla_{ -\kappa } \nabla_\kappa \phi$ and $\mc{N}_\sigma \phi$ in \eqref{elliptic_H2} and \eqref{elliptic_neumann}, for $\phi$ supported near any $x_0 \in \Gamma$.

Next, the idea for controlling $\nabla_{ -\kappa } \nabla_\kappa \phi$ is once again similar to standard elliptic estimates.
By \eqref{mod_op} and an integration by parts, we have that
\begin{align*}
\lambda \int_\Omega [ \nabla_\kappa \phi \cdot \nabla_\kappa \psi - ( X \cdot \nabla \phi ) \, \psi - V_y \, \phi \psi ] = \int_\Omega A_\sigma \phi \cdot \psi
\end{align*}
for any $\psi \in C^\infty_0 ( \Omega )$, and hence for any $\psi \in H^1_0 ( \Omega )$ by approximation.
Now, let $\slashed{\nabla}$ and $\slashed{\Delta}$ denote the gradient and Laplacian on level sets of $y$, respectively, and let $\slashed{\nabla}_\star$ and $\slashed{\Delta}_\star$ denote corresponding difference quotients.
Letting $\psi := \slashed{\Delta}_\star \phi \in H^1_0 ( \Omega )$ in the above and recalling \eqref{hardy_main}--\eqref{hardy_norm_equiv}, we then obtain
\begin{align*}
\| \slashed{\nabla}_\star \nabla_\kappa \phi \|_{ L^2 ( \Omega ) }^2 &= \int_\Omega \nabla_\kappa \phi \cdot \slashed{\Delta}_\star \nabla_\kappa \phi \\
&\lesssim ( \| A_\sigma \phi \|_{ L^2 ( \Omega ) } + \| \phi \|_{ H^1 ( \Omega ) } ) \| \slashed{\nabla}_\star \nabla_\kappa \phi \|_{ L^2 ( \Omega ) } + \| \phi \|_{ H^1 ( \Omega ) }^2 \text{.}
\end{align*}
Combining the above with \eqref{elliptic_H1} yields the bound
\[
\| \slashed{\nabla} \nabla_\kappa \phi \|_{ L^2 ( \Omega ) }^2 \lesssim \| A_\sigma \phi \|_{ L^2 ( \Omega ) }^2 + \| \phi \|_{ L^2 ( \Omega ) }^2 \text{.}
\]
(Note we used standard properties of difference quotients in the above; see \cite{Evans}.)

The remaining second derivative $y^{ -\kappa } D_y [ y^{ 2 \kappa } D_y ( y^{ -\kappa } \phi ) ]$ can now be controlled---with the help of \eqref{hardy_norm_equiv} and the above---by $A_\sigma \phi$, $\slashed{\nabla} \nabla_\kappa \phi$, $X \cdot \nabla \phi$, and $V_y \phi$, yielding
\begin{align*}
\| \nabla_{ -\kappa } \nabla_\kappa \phi \|_{ L^2 ( \Omega ) } &\lesssim \| A_\sigma \phi \|_{ L^2 ( \Omega ) } + \| \slashed{\nabla} \nabla_\kappa \phi \|_{ L^2 ( \Omega ) } + \| \nabla \phi \|_{ L^2 ( \Omega ) } + \| y^{-1} \phi \|_{ L^2 ( \Omega ) } \\
&\lesssim \| A_\sigma \phi \|_{ L^2 ( \Omega ) } + \| \phi \|_{ H^1 ( \Omega ) } \text{.}
\end{align*}
The inequality \eqref{elliptic_H2} now follows from combining \eqref{elliptic_H1} and the above.

For $\mc{N}_\sigma \phi$, first observe that for any $0 < y_1 < y_0 \ll 1$, we have
\begin{align*}
&\| \eta^{ y_0 } y^{ 2 \kappa } D_y ( y^{ -\kappa } \phi ) - \eta^{ y_1 } y^{ 2 \kappa } D_y ( y^{ -\kappa } \phi ) \|^2_{ L^2 ( \Gamma ) } \\
&\quad = \int_\Gamma \bigg( \int_{ y_1 }^{ y_0 } \eta^s D_y [ y^{ 2 \kappa } D_y ( y^{ -\kappa } \phi ) ] \, ds \bigg)^2 \\
&\quad \lesssim \int_{ y_1 }^{ y_0 } s^{ 2 \kappa } ds \cdot ( \| \nabla_{ -\kappa } \nabla_\kappa \phi \|_{ L^2 ( \Omega ) }^2 + \| \phi \|_{ H^1 ( \Omega ) }^2 ) \\
&\quad \lesssim y_0^{ 1 + 2 \kappa } ( \| A_\sigma \phi \|_{ L^2 ( \Omega ) }^2 + \| \phi \|_{ L^2 ( \Omega ) }^2 ) \text{,}
\end{align*}
where we applied \eqref{elliptic_H2} in the last step, and we where noted $2 \kappa > -1$.
In particular, the right-hand side vanishes $y_0 \searrow 0$, which implies $\mc{N}_\sigma \phi$ is well-defined in $L^2 ( \Gamma )$.

Next, we fix $0 < y_0 \ll 1$ and a smooth cutoff $\chi: ( 0, \infty ) \rightarrow [ 0, 1 ]$ satisfying
\begin{equation}
\label{cutoff_y} \chi (s) = \begin{cases} 1 & 0 < s < y_0 \text{,} \\ 0 & s > 2 y_0 \text{.} \end{cases}
\end{equation}
Then, a similar estimate as before, using also the above $\chi$, yields
\begin{align*}
\int_\Gamma ( \mc{N}_\sigma \phi )^2 &= \int_\Gamma \bigg( \int_0^{ 2 y_0 } \eta^s D_y [ \chi (y) \cdot y^{ 2 \kappa } D_y ( y^{ -\kappa } \phi ) ] \, ds \bigg)^2 \\
&\lesssim \| \nabla_{ -\kappa } \nabla_\kappa \phi \|_{ L^2 ( \Omega ) }^2 + \| \nabla_\kappa \phi \|_{ L^2 ( \Omega ) }^2 \text{,}
\end{align*}
so that combining \eqref{hardy_norm_equiv}, \eqref{elliptic_H2}, and the above results in \eqref{elliptic_neumann}.
\end{proof}

\begin{corollary} \label{semigroup}
There exists some $\gamma \geq 0$ (depending on $\Omega$, $\sigma$, $X$, $V$) such that $-A_\sigma$ generates a $\gamma$-contractive semigroup $t \mapsto e^{-t A_\sigma}$ on $L^2 ( \Omega )$, that is,
\begin{equation}
\label{semigroup_cont} \| e^{-t A_\sigma } \phi \|_{ L^2 ( \Omega ) } \leq e^{ \gamma t } \| \phi \|_{ L^2 ( \Omega ) } \text{,} \qquad t > 0 \text{,} \quad \phi \in L^2 ( \Omega ) \text{.}
\end{equation}
\end{corollary}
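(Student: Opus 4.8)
The plan is to read off Corollary \ref{semigroup} from Proposition \ref{prop_resolvent} via the classical generation theorem of Hille--Yosida (equivalently, the Lumer--Phillips theorem; see, e.g., \cite{Evans}). Recall that this theorem asserts that a closed, densely defined operator generates a $C_0$-semigroup whose operator norm is bounded by $e^{\gamma t}$ as soon as $(\gamma, \infty)$ lies in its resolvent set and the resolvents obey the bound $(\lambda - \gamma)^{-1}$ on the first power---hence, by submultiplicativity of the operator norm, automatically $(\lambda-\gamma)^{-n}$ on every power, which is exactly what the $M=1$ version of the theorem requires. Proposition \ref{prop_resolvent} furnishes precisely these data, so the corollary is essentially a repackaging of that proposition into the language of semigroups, and the only additional points to record are density and closedness of the relevant domain.

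Concretely, I would first note that $\mf{D}(A_\sigma)$ is dense in $L^2(\Omega)$, which is immediate since $C^\infty_0(\Omega) \subseteq \mf{D}(A_\sigma)$ (as observed after Definition \ref{op}). Next, for closedness: Proposition \ref{prop_resolvent} gives, for each $\lambda > \gamma$, that $\lambda I - A_\sigma : \mf{D}(A_\sigma) \to L^2(\Omega)$ is a bijection with a bounded, everywhere-defined inverse, so $\lambda I - A_\sigma$---and hence $A_\sigma$ itself---has closed graph. Finally, the estimate \eqref{resolvent} contains in particular $\|(\lambda I - A_\sigma)^{-1}\|_{L^2(\Omega)\to L^2(\Omega)} \leq (\lambda-\gamma)^{-1}$ for every $\lambda > \gamma$. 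Feeding these three facts into the generation theorem produces the $\gamma$-contractive $C_0$-semigroup and the bound \eqref{semigroup_cont}.

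If one prefers the Lumer--Phillips route over quoting the abstract resolvent estimate, it suffices instead to check the dissipativity of the appropriately shifted operator: for $\phi \in \mf{D}(A_\sigma)$ one has the integration-by-parts identity $\mc{B}_\sigma(\phi,\phi) = -\langle A_\sigma \phi, \phi\rangle_{L^2(\Omega)}$---the boundary term vanishing because $\phi \in H^1_0(\Omega)$ has vanishing Dirichlet trace by Proposition \ref{dirichlet_vanish}---so that the coercivity \eqref{elliptic_energy} yields $\langle A_\sigma \phi, \phi\rangle_{L^2(\Omega)} \leq \gamma\|\phi\|_{L^2(\Omega)}^2$, while the surjectivity of $\lambda I - A_\sigma$ for $\lambda > \gamma$ (again Proposition \ref{prop_resolvent}) supplies the required range condition. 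There is no genuine obstacle in this argument; the only step deserving a line of justification is the boundary-term-free integration by parts just mentioned (or, in the Hille--Yosida formulation, the closedness of $A_\sigma$), after which the conclusion is purely formal.
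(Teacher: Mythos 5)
Your proposal is correct and follows the same overall strategy as the paper: combine the resolvent bound of Proposition \ref{prop_resolvent} with the density of $\mf{D}(A_\sigma)$ (via $C^\infty_0(\Omega)$) and invoke Hille--Yosida. The one place where you genuinely diverge is the closedness of $A_\sigma$. The paper proves this ``by hand'': given $\phi_k \to \phi$ and $A_\sigma \phi_k \to \psi$ in $L^2(\Omega)$, it applies the weighted elliptic estimate \eqref{elliptic_H2} of Proposition \ref{elliptic} to $\phi_k - \phi_l$ to show that $(\phi_k)$ is Cauchy in a weighted $H^2$-space, whence $\nabla_{-\kappa}\nabla_\kappa \phi_k \to \nabla_{-\kappa}\nabla_\kappa \phi$ and $\psi = A_\sigma\phi$. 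You instead observe that, for $\lambda > \gamma$, the operator $(\lambda I - A_\sigma)^{-1}$ is bounded and defined on all of $L^2(\Omega)$, hence has closed graph, hence so does $\lambda I - A_\sigma$ (the flip of a closed graph is closed), hence so does $A_\sigma$. This is a correct and strictly more economical argument: it uses only Proposition \ref{prop_resolvent} as stated and bypasses the elliptic regularity machinery entirely. What the paper's longer route buys is the explicit information that graph-norm convergence entails convergence of the second-order quantities $\nabla_{-\kappa}\nabla_\kappa\phi_k$, which is in the spirit of how $\mf{D}(A_\sigma)$ is used elsewhere, but it is not needed for the corollary itself. Your alternative Lumer--Phillips sketch is also viable; as you note, its only nontrivial ingredient is the boundary-term-free identity $\langle A_\sigma\phi,\phi\rangle = -\mc{B}_\sigma(\phi,\phi)$ for $\phi \in \mf{D}(A_\sigma)$, which requires the same kind of justification (approximation plus Proposition \ref{dirichlet_vanish}) that the paper carries out in the proof of Proposition \ref{prop_resolvent}.
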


\begin{proof}
Letting $\gamma$ be as in Proposition \ref{prop_resolvent}, then \eqref{resolvent} implies
\[
\| ( \lambda I - A_\sigma )^{-1} f \|_{ L^2 ( \Omega ) } \leq ( \lambda - \gamma )^{-1} \| f \|_{ L^2 ( \Omega ) } \text{,} \qquad f \in L^2 ( \Omega ) \text{,} \quad \lambda > \gamma \text{.}
\]
Moreover, since $\mf{D} ( A_\sigma )$ contains $C^\infty_0 ( \Omega )$, it is dense in $L^2 ( \Omega )$.
Thus, by the above and the Hille--Yosida theorem, we need only show that $A_\sigma$ is closed.

To see this, we consider a sequence $( \phi_k )$ in $\mf{D} ( A_\sigma )$ such that
\begin{equation}
\label{closed_cauchy} \lim_{ k \rightarrow \infty } \phi_k = \phi \text{,} \qquad \lim_{ k \rightarrow \infty } A_\sigma \phi_k = \psi \text{,}
\end{equation}
with both limits in $L^2 ( \Omega )$.
Then, \eqref{elliptic_H2} yields that
\begin{align*}
&\| \nabla_{ -\kappa } \nabla_\kappa ( \phi_k - \phi_l ) ) ] \|_{ L^2 ( \Omega ) } + \| \nabla ( \phi_k - \phi_l ) \|_{ L^2 ( \Omega ) } \\
&\quad \lesssim \| A_\sigma ( \phi_k - \phi_l ) \|_{ L^2 ( \Omega ) } + \| \phi_k - \phi_l \|_{ L^2 ( \Omega ) } \text{,}
\end{align*}
for any $k, l \in \N$.
As the right-hand side of the above converges to zero as $k, l \rightarrow \infty$ by \eqref{closed_cauchy}, then $( \phi_k )$ is a Cauchy sequence in a weighted $H^2$-space, so that
\[
\lim_{ k \rightarrow \infty } \nabla \phi_k = \nabla \phi \text{,} \qquad \lim_{ k \rightarrow \infty } \nabla_{ -\kappa } \nabla_\kappa \phi_k = \nabla_{ -\kappa } \nabla_\kappa \phi \text{.}
\]
The above then implies $\psi = A_\sigma \phi$, proving that $A_\sigma$ is closed.
\end{proof}

\begin{remark}
The assumption $\sigma > -\frac{3}{4}$ ($\kappa > -\frac{1}{2}$) was only used to construct the Neumann operator $\mc{N}_\sigma$.
Other parts of the theory only required $\sigma < \frac{1}{4}$ ($\kappa < \frac{1}{2}$).
\end{remark}

\subsection{Semigroup solutions}

We now turn our attention to the singular heat equations from Problems (OI) and (O).
As in \cite{BZuazua}, we use the semigroup generated in Corollary \ref{semigroup} to build solutions of these problems:

\begin{definition} \label{mild_soln}
Given $u_T \in L^2 ( \Omega )$, $F \in L^2 ( (0, T) \times \Omega )$, we define the associated (semigroup) solution of Problem (OI) to be the map $u \in C^0 ( [ 0, T ]; L^2 ( \Omega ) )$ given by
\begin{equation}
\label{mild_duhamel} u (t) = e^{(T-t) A_\sigma} u_T - \int_t^T e^{(s-t) A_\sigma} F (s) \, ds \text{,} \qquad t \in [ 0, T ] \text{.}
\end{equation}
\end{definition}

Next, we derive the essential regularity properties of these semigroup solutions:

\begin{proposition} \label{wp_mild}
Suppose $u_T \in L^2 ( \Omega )$ and $F \in L^2 ( (0, T) \times \Omega )$, and let $u$ denote the asssociated solution of Problem (OI).
Then,
\begin{equation}
\label{regularity_mild} u \in C^0 ( [ 0, T ]; L^2 ( \Omega ) ) \cap L^2 ( ( 0, T ); H^1_0 ( \Omega ) ) \text{,}
\end{equation}
and $u$ also satisfies the following estimate,
\begin{equation}
\label{energy_mild} \| u \|_{ L^\infty ( [0, T]; L^2 ( \Omega ) ) }^2 + \| \nabla_\kappa u \|_{ L^2 ( ( 0, T ) \times \Omega ) }^2 \lesssim \| u_T \|_{ L^2 ( \Omega ) }^2 + \| F \|_{ L^2 ( ( 0, T ) \times \Omega ) }^2 \text{,}
\end{equation}
where the constant depends on $T$, $\Omega$, $\sigma$, $X$, $V$.
\end{proposition}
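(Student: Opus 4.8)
The plan is to exploit the Duhamel representation \eqref{mild_duhamel} together with the resolvent/elliptic estimates already established, in particular \eqref{resolvent}, \eqref{elliptic_H2}, and the norm equivalence \eqref{hardy_norm_equiv}. First I would reduce to the homogeneous case and the pure-forcing case separately, by linearity of \eqref{mild_duhamel}: write $u = u_{\mathrm{hom}} + u_F$, where $u_{\mathrm{hom}}(t) := e^{(T-t)A_\sigma}u_T$ and $u_F(t) := -\int_t^T e^{(s-t)A_\sigma}F(s)\,ds$. For the continuity in $L^2(\Omega)$ and the bound $\|u\|_{L^\infty([0,T];L^2(\Omega))} \lesssim \|u_T\|_{L^2} + \|F\|_{L^2((0,T)\times\Omega)}$, the contractivity estimate \eqref{semigroup_cont} handles $u_{\mathrm{hom}}$ immediately, and for $u_F$ one applies \eqref{semigroup_cont} inside the integral and then Cauchy--Schwarz in $s$; strong continuity of the semigroup gives the $C^0$ regularity in time. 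The mild-to-weak equivalence (that $u$ solves the PDE in the distributional sense) follows by the standard density argument: approximate $u_T$ by elements of $\mf{D}(A_\sigma^2)$ and $F$ by smooth functions, for which \eqref{mild_duhamel} is classically differentiable, then pass to the limit.

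The heart of the matter is the spacetime bound $\|\nabla_\kappa u\|_{L^2((0,T)\times\Omega)}^2 \lesssim \|u_T\|_{L^2(\Omega)}^2 + \|F\|_{L^2((0,T)\times\Omega)}^2$, equivalently (by \eqref{hardy_norm_equiv}) the statement $u \in L^2((0,T);H^1_0(\Omega))$ with the correct quantitative control. I would first establish this for regularized data, where $u$ is genuinely differentiable in $t$ and $u(t) \in \mf{D}(A_\sigma)$ for all $t$; then multiply the equation $\partial_t u + A_\sigma u = F$ by $u$ (in the $L^2(\Omega)$ pairing), integrate over $\Omega$, and integrate in $t$ from $t$ to $T$. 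Using \eqref{mod_op}--\eqref{VW_y} to write $-\int_\Omega u A_\sigma u = \mathcal B_\sigma(u,u) = \|\nabla_\kappa u\|_{L^2(\Omega)}^2 - (\text{lower order})$ exactly as in \eqref{elliptic_bilinear}--\eqref{elliptic_energy}, and noting that the boundary terms from integration by parts vanish because $u(t) \in H^1_0(\Omega)$ (indeed $\mathcal D_\sigma u = 0$ by Proposition \ref{dirichlet_vanish}), one obtains the differential identity
\begin{equation*}
-\tfrac{d}{dt}\,\tfrac12\|u(t)\|_{L^2(\Omega)}^2 + \|\nabla_\kappa u(t)\|_{L^2(\Omega)}^2 = \int_\Omega F(t)\,u(t) + (\text{l.o.t.}) \text{.}
\end{equation*}
Integrating in $t$, absorbing the lower-order term $C\|u\|_{H^1(\Omega)}\|u\|_{L^2(\Omega)}$ via the coercivity \eqref{elliptic_energy} (i.e.\ Young's inequality plus \eqref{hardy_norm_equiv}) and the forcing term via Cauchy--Schwarz and the already-established $L^\infty_t L^2_x$ bound, and using Gronwall to control the factor $e^{\gamma T}$, yields \eqref{energy_mild} for regularized data. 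Finally, I would remove the regularization by a limiting argument: the right-hand side of \eqref{energy_mild} is continuous under $L^2$-approximation of $(u_T,F)$, so the estimate (and the membership $u \in L^2((0,T);H^1_0(\Omega))$, using that $H^1_0(\Omega)$ is closed) passes to the limit, and uniqueness identifies the limit with the semigroup solution \eqref{mild_duhamel}.

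The main obstacle I anticipate is the rigorous justification of the energy identity at the level of regularized data and, relatedly, confirming that the approximating solutions actually lie in $H^1_0(\Omega)$ slice-by-slice so that no boundary term survives when one integrates $\int_\Omega u\, \nabla_{-\kappa}\!\cdot\!\nabla_\kappa u$ by parts near $\Gamma$---here the singular weights in $\nabla_{\pm\kappa}$ mean one cannot blindly invoke the classical trace theory, and one must instead appeal to \eqref{dirichlet_zero}--\eqref{dirichlet_extra} to see that the would-be boundary integral $\int_{\{y=\delta\}} y^{2\kappa}\, D_y(y^{-\kappa}u)\cdot y^{-\kappa}u$ vanishes as $\delta \searrow 0$. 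Everything else (strong continuity, Duhamel manipulations, Gronwall) is routine once this point is secured.
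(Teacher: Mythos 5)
Your proposal is correct and follows essentially the same route as the paper: regularize the data so that the Duhamel solution is a genuine strict solution, derive the energy identity by pairing the equation with $u$ and integrating by parts (with the boundary terms killed via Proposition \ref{dirichlet_vanish} and the elliptic regularity of Proposition \ref{elliptic}), control the lower-order and forcing terms via the Hardy inequality \eqref{hardy_main}--\eqref{hardy_norm_equiv} and Gronwall, and then pass to general $(u_T,F)$ by density. The only (harmless) cosmetic difference is that you obtain the $L^\infty_tL^2_x$ bound directly from the semigroup contractivity \eqref{semigroup_cont}, whereas the paper extracts it from the same energy identity.
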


\begin{proof}
First, we assume $u_T \in C^\infty_0 ( \Omega )$ and $F \in C^\infty_0 ( ( 0, T ) \times \Omega )$, so that the solution $u$, defined as in \eqref{mild_duhamel}, is smooth and satisfies in particular
\[
u \in C^0 ( [ 0, T ]; \mf{D} ( A_\sigma ) ) \text{,} \qquad \partial_t u \in C^0 ( [ 0, T]; L^2 ( \Omega ) ) \text{.}
\]
Note that Propositions \ref{dirichlet_vanish} and \ref{elliptic} are applicable in this case.

Then, by the fundamental theorem of calculus, \eqref{heat_ex}, and \eqref{mod_op},
\begin{align*}
\| u (T) \|_{ L^2 ( \Omega ) }^2 &= \| u (t) \|_{ L^2 ( \Omega ) }^2 + 2 \int_t^T \int_\Omega u ( F - \nabla_{ -\kappa } \nabla_\kappa u - X \cdot \nabla u - V_y u ) \big|_{t=s} ds \\
&= \| u (t) \|_{ L^2 ( \Omega ) }^2 + 2 \int_t^T \int_\Omega F u |_{t=s} ds + 2 \int_t^T \int_\Omega | \nabla_\kappa u |^2 \big|_{ t = s } ds \\
&\qquad + \int_t^T \int_\Omega ( \nabla \cdot X - 2 V_y ) u^2 \big|_{t=s} ds \text{,}
\end{align*}
for any $t \in [ 0, T )$.
(In the last step, we integrated by parts and used Propositions \ref{dirichlet_vanish} and \ref{elliptic} to eliminate boundary terms; observe also that $\nabla_{ -\kappa } \nabla_\kappa u$ is well-defined due to Proposition \ref{elliptic}.)
By Definition \ref{def.lower_order}, we then obtain
\begin{align*}
\| u (t) \|_{ L^2 ( \Omega ) }^2 + \| \nabla_\kappa u \|_{ L^2 ( ( t, T ) \times \Omega ) }^2 &\lesssim \| u_T \|_{ L^2 ( \Omega ) }^2 + \int_t^T \| F (s) \|_{ L^2 ( \Omega ) } \| u (s) \|_{ L^2 ( \Omega ) } ds \\
&\qquad + \int_t^T \| y^{-1} u (s) \|_{ L^2 ( \Omega ) } \| u (s) \|_{ L^2 ( \Omega ) } ds \text{.}
\end{align*}
Applying \eqref{hardy_main}--\eqref{hardy_norm_equiv} and absorbing terms into the left-hand side yields
\begin{align*}
\| u (t) \|_{ L^2 ( \Omega ) }^2 + \| \nabla_\kappa u \|_{ L^2 ( ( t, T ) \times \Omega ) }^2 &\lesssim \| u_T \|_{ L^2 ( \Omega ) }^2 + \| F \|_{ L^2 ( (0, T) \times \Omega ) }^2 \\
&\qquad + \int_t^T \| u (s) \|_{ L^2 ( \Omega ) }^2 ds \text{,}
\end{align*}
hence \eqref{energy_mild}, for regular $u_T$ and $F$, now follows via Gronwall's inequality.

Finally, for general $u_T \in L^2 ( \Omega )$ and $F \in L^2 ( ( 0, T ) \times \Omega )$, we consider sequences $( u_{ T, k } )$ and $( F_k )$ in $C^\infty_0 ( \Omega )$ and $C^\infty_0 ( ( 0, T ) \times \Omega )$ converging to $u_T$ and $F$ in $L^2 ( \Omega )$ and $L^2 ( ( 0, T ) \times \Omega )$, respectively.
Applying \eqref{energy_mild} to solutions of Problem (OI) arising from the $u_{ T, k }$'s and the $F_k$'s, as well as from their differences, we obtain that \eqref{energy_mild} continues to hold for solutions of Problem (OI) arising from $u_T$ and $F$.
\end{proof}

\begin{proposition} \label{wp_strict}
Suppose $u_T \in H^1_0 ( \Omega )$ and $F \in L^2 ( (0, T) \times \Omega )$, and let $u$ denote the asssociated solution of Problem (OI).
Then,
\begin{equation}
\label{regularity_strict} u \in C^0 ( [ 0, T ]; H^1_0 ( \Omega ) ) \cap H^1 ( ( 0, T ); L^2 ( \Omega ) ) \cap L^2 ( ( 0, T ); \mf{D} ( A_\sigma ) ) \text{,}
\end{equation}
and $u$ satisfies the following almost everywhere on $( 0, T ) \times \Omega$:
\begin{equation}
\label{eq_strict} ( \partial_t + \Delta_\sigma + X \cdot \nabla + V ) u = F \text{.}
\end{equation}
Furthermore, $u$ satisfies the estimate
\begin{align}
\label{energy_strict} &\| u \|_{ L^\infty ( [0, T]; H^1 ( \Omega ) ) }^2 + \| \nabla_{ -\kappa } \nabla_\kappa u \|_{ L^2 ( ( 0, T ) \times \Omega ) }^2 + \| \partial_t u \|_{ L^2 ( ( 0, T ) \times \Omega ) }^2 \\
\notag &\quad \lesssim \| u_T \|_{ H^1 ( \Omega ) }^2 + \| F \|_{ L^2 ( ( 0, T ) \times \Omega ) }^2 \text{,}
\end{align}
with the constant depending on $T$, $\Omega$, $\sigma$, $X$, $V$.
\end{proposition}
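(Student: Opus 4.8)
The plan is to mirror the proof of Proposition~\ref{wp_mild}: establish the claim first for regular data by a direct energy identity, then recover the general case by density together with the resulting estimate.

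First I would take $u_T$ in the dense subclass $\mf{D}(A_\sigma^2)$ (in the sense of \eqref{AB_2}) and $F \in C^\infty_0((0,T)\times\Omega)$. For such data, the Duhamel formula \eqref{mild_duhamel} together with the semigroup theory of Corollary~\ref{semigroup} produces $u$ regular enough that \eqref{eq_strict} holds classically and $u \in C^0([0,T];\mf{D}(A_\sigma))$; moreover $\partial_t u = F - A_\sigma u$, and since $A_\sigma u(t) \in \mf{D}(A_\sigma) \subseteq H^1_0(\Omega)$ and $F(t) \in C^\infty_0(\Omega)$, one has $\partial_t u(t) \in H^1_0(\Omega)$ as well. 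In particular, Propositions~\ref{dirichlet_vanish} and~\ref{elliptic} apply to both $u(t)$ and $\partial_t u(t)$.

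Next I would derive the key energy identity. Rewriting \eqref{eq_strict} via \eqref{mod_op} as $\partial_t u + \nabla_{-\kappa}\cdot\nabla_\kappa u + X\cdot\nabla u + V_y u = F$ and pairing with $\partial_t u$, an integration by parts turns $\int_\Omega (\nabla_{-\kappa}\cdot\nabla_\kappa u)\,\partial_t u$ into $-\tfrac12 \tfrac{d}{dt}\|\nabla_\kappa u\|_{L^2(\Omega)}^2$ plus a boundary term; that boundary term is, up to harmless factors, a pairing of $\mc{D}_\sigma(\partial_t u)$ against $\mc{N}_\sigma u$, so since $\partial_t u(t) \in H^1_0(\Omega)$ Proposition~\ref{dirichlet_vanish} forces $\mc{D}_\sigma(\partial_t u) = 0$ (while $\mc{N}_\sigma u$ stays bounded in $L^2(\Gamma)$ by Proposition~\ref{elliptic}), so it drops, exactly as in the proof of Proposition~\ref{wp_mild}. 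Estimating the rest of the right-hand side with Cauchy--Schwarz, Definition~\ref{def.lower_order}, and the Hardy bounds \eqref{hardy_main}--\eqref{hardy_norm_equiv} (to absorb $\|y^{-1}u\|_{L^2}$ into $\|u\|_{H^1}$), and absorbing $\|\partial_t u\|_{L^2}^2$, one obtains after integrating from $t$ to $T$
\begin{equation*}
\|\nabla_\kappa u(t)\|_{L^2(\Omega)}^2 + \int_t^T \|\partial_t u(s)\|_{L^2(\Omega)}^2\,ds \lesssim \|u_T\|_{H^1(\Omega)}^2 + \|F\|_{L^2((0,T)\times\Omega)}^2 + \int_t^T \|u(s)\|_{H^1(\Omega)}^2\,ds \text{.}
\end{equation*}
With Corollary~\ref{cor_hardy} and the $L^2$-bound \eqref{energy_mild} already in hand, a backward Gronwall argument then controls $\|u\|_{L^\infty([0,T];H^1(\Omega))}$ and $\|\partial_t u\|_{L^2((0,T)\times\Omega)}$ by the right-hand side of \eqref{energy_strict}. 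Finally, since $A_\sigma u = F - \partial_t u$ by \eqref{eq_strict}, the elliptic bound \eqref{elliptic_H2} controls $\|\nabla_{-\kappa}\nabla_\kappa u(t)\|_{L^2}$ by $\|F(t)\|_{L^2}+\|\partial_t u(t)\|_{L^2}+\|u(t)\|_{L^2}$, and integrating in $t$ completes \eqref{energy_strict} and \eqref{regularity_strict} for regular data. For general $u_T \in H^1_0(\Omega)$, $F \in L^2((0,T)\times\Omega)$, I would take $(u_{T,k}) \subseteq \mf{D}(A_\sigma^2)$ and $(F_k) \subseteq C^\infty_0$ converging in $H^1$ and $L^2$; applying \eqref{energy_strict} to the differences of the associated regular solutions shows this sequence is Cauchy in each of the spaces in \eqref{regularity_strict}, hence converges there, and the limit coincides with the semigroup solution \eqref{mild_duhamel} by continuity of Duhamel in $L^2$. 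Passing to the limit in $\partial_t u_k + A_\sigma u_k = F_k$ yields \eqref{eq_strict} almost everywhere (with $\Delta_\sigma u \in L^2$ via \eqref{VW_y}, since $\nabla_{-\kappa}\nabla_\kappa u$ and $y^{-1}u$ lie in $L^2$), and \eqref{energy_strict} passes to the limit.

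The main obstacle is the boundary-term accounting in the energy identity: because $u(t)$ is merely a domain solution carrying the singular weight $d_\Gamma^{-2\kappa}$, the integration by parts against $\partial_t u$ must be justified through the modified trace operators $\mc{D}_\sigma$, $\mc{N}_\sigma$ and Propositions~\ref{dirichlet_vanish}--\ref{elliptic}. This is precisely why the approximating initial data should be chosen in $\mf{D}(A_\sigma^2)$ rather than merely in $\mf{D}(A_\sigma)$: one needs $\partial_t u(t) \in H^1_0(\Omega)$ in order to conclude $\mc{D}_\sigma(\partial_t u) = 0$, which is what legitimizes the computation.
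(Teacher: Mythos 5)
Your proposal is correct and follows essentially the same route as the paper: regularize the data, derive the $H^1$-energy identity by pairing the equation with the time derivative (the paper equivalently pairs with $\nabla_{-\kappa}\cdot\nabla_\kappa u$ and keeps that square instead of $\|\partial_t u\|^2$, recovering the other term from the equation/elliptic estimate \eqref{elliptic_H2} exactly as you do), kill the boundary term using $\partial_t u(t)\in H^1_0(\Omega)$ together with Propositions \ref{dirichlet_vanish} and \ref{elliptic}, close with Hardy and Gronwall, and pass to general data by density. The emphasis you place on needing $\mf{D}(A_\sigma^2)$-level regularity so that $\mc{D}_\sigma(\partial_t u)=0$ is precisely the point the paper's proof relies on.
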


\begin{proof}
First, assume $u_T \in C^\infty_0 ( \Omega )$ and $F \in C^\infty_0 ( ( 0, T ) \times \Omega )$, which implies
\[
u \in C^0 ( [ 0, T ]; \mf{D} ( A_\sigma^2 ) ) \text{,} \qquad \partial_t u \in C^0 ( [ 0, T ]; \mf{D} ( A_\sigma ) ) \text{.}
\]
(Recall $\mf{D} ( A_\sigma ^2 )$ was defined in \eqref{AB_2}.)
This then enables the following computation,
\begin{align*}
\| \nabla_\kappa u (T) \|_{ L^2 ( \Omega ) }^2 &= \| \nabla_\kappa u (t) \|_{ L^2 ( \Omega ) }^2 - 2 \int_t^T \int_\Omega \partial_t u \, ( \nabla_{ -\kappa } \cdot \nabla_\kappa u ) \big|_{t=s} ds \\
&= 2 \int_t^T \int_\Omega ( - F + X \cdot \nabla u + V_y \, u ) ( \nabla_{ -\kappa } \cdot \nabla_\kappa u ) \big|_{t=s} ds \\
&\qquad + 2 \int_t^T \int_\Omega | \nabla_{ -\kappa } \cdot \nabla_\kappa u |^2 \big|_{t=s} ds \text{.}
\end{align*}
in which we used \eqref{heat_ex}, \eqref{mod_op}, and an integration by parts (which produce no boundary terms due to Propositions \ref{dirichlet_vanish} and \ref{elliptic}, since $\partial_t u (t) \in H^1_0 ( \Omega )$ for $t \in [ 0, T ]$.)

Rearranging the above, applying \eqref{hardy_norm_equiv}, and recalling also \eqref{energy_mild}, we then have
\begin{align}
\label{approx_strict_0} \| u (t) \|_{ H^1 ( \Omega ) }^2 + \| \nabla_{ -\kappa } \cdot \nabla_\kappa u \|_{ L^2 ( ( t, T ) \times \Omega ) }^2 &\lesssim \| u_T \|_{ H^1 ( \Omega ) }^2 + \| F \|_{ L^2 ( ( t, T ) \times \Omega ) }^2 \\
\notag &\qquad + \int_t^T \int_\Omega \| u (s) \|_{ H^1 ( \Omega ) }^2 \, ds \text{.}
\end{align}
Furthermore, by the heat equation \eqref{heat_ex}, we also obtain the estimate
\begin{align}
\label{approx_strict_t} \| \partial_t u \|_{ L^2 ( ( 0, T ) \times \Omega ) }^2 &\lesssim \| A_\sigma u \|_{ L^2 ( ( 0, T ) \times \Omega ) }^2 + \| F \|_{ L^2 ( ( 0, T ) \times \Omega ) }^2 \\
\notag &\lesssim \| u_T \|_{ H^1 ( \Omega ) }^2 + \| F \|_{ L^2 ( ( 0, T ) \times \Omega ) }^2 \text{.}
\end{align}
Thus, \eqref{energy_strict}, for regular $u_T$ and $F$, now follows from \eqref{elliptic_H2}, \eqref{approx_strict_0}, and \eqref{approx_strict_t}.

Now, for general $u_T \in H^1_0 ( \Omega )$ and $F \in L^2 ( ( 0, T ) \times \Omega )$, we approximate, as in the proof of Proposition \ref{wp_strict}, using data in $C^\infty_0 ( \Omega )$ and $C^\infty_0 ( ( 0, T ) \times \Omega )$, respectively.
From this, we conclude that \eqref{energy_strict} still holds, and that the solution $u$ lies in $L^2 ( ( 0, T ); \mf{D} ( A_\sigma ) )$ and $H^1 ( ( 0, T ); L^2 ( \Omega ) )$.
As each approximate solution to Problem (OI) lies in $C^0 ( [ 0, T ]; H^1_0 ( \Omega ) )$, the same must also hold for $u$, which proves \eqref{regularity_strict}.
Finally, \eqref{regularity_strict} provides enough regularity to make sense of and to verify \eqref{eq_strict}.
\end{proof}

\begin{remark}
In the terminology of \cite{BZuazua, new} and of semigroup theory, $L^2$-solutions of Problem (OI) in the setting of Proposition \ref{wp_mild} are called \emph{mild solutions}, while $H^1_0$-solutions in the setting of Proposition \ref{wp_strict} are called \emph{strict solutions}.
\end{remark}

\begin{remark}
Observe the assumption $\sigma > -\frac{3}{4}$ ($\kappa > -\frac{1}{2}$) was not needed for solving Problems (OI) and (O), or for proving Propositions \ref{wp_mild} and \ref{wp_strict}.
This will only be used to show the existence of and estimates for the Neumann trace, and it will likewise be essential for the upcoming theory of dual solutions.
\end{remark}

\subsection{The Neumann Trace}

Similar to \cite{new}, the next step is to make sense of the Neumann trace in the setting of $H^1_0$-solutions of Problem (OI):

\begin{proposition} \label{neumann_trace}
Let $u_T \in H^1_0 ( \Omega )$ and $F \in L^2 ( (0, T) \times \Omega )$, and let $u$ denote the corresponding solution to Problem (OI).
Then, the Neumann trace $\mc{N}_\sigma u$ is well-defined as an element of $L^2 ( (0, T) \times \Gamma )$.
Furthermore, for $0 < y_0 \ll 1$, we have
\begin{align}
\label{hidden_reg} &\| \mc{N}_\sigma u \|_{ L^2 ( ( 0, T ) \times \Gamma ) }^2 + \sup_{ 0 < \delta < y_0 } \| \eta^\delta [ y^{ 2 \kappa } D_y ( y^{ -\kappa } u ) ] \|_{ L^2 ( ( 0, T ) \times \Gamma ) }^2 \\
\notag &\quad \lesssim \| u_T \|_{ H^1 ( \Omega ) }^2 + \| F \|_{ L^2 ( ( 0, T ) \times \Omega ) }^2 \text{,}
\end{align}
where the constant of the inequality depends on $T, \Omega, \sigma, X, V$.
\end{proposition}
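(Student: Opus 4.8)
The plan is to reduce this time-dependent ``hidden regularity'' estimate to the elliptic Neumann trace bound of Proposition \ref{elliptic}, applied on each time slice, combined with the strict-solution energy estimate of Proposition \ref{wp_strict}. By Proposition \ref{wp_strict}, the solution $u$ to Problem (OI) lies in $L^2((0,T);\mf{D}(A_\sigma)) \cap H^1((0,T);L^2(\Omega))$ and satisfies \eqref{eq_strict} a.e., so that for a.e.\ $t \in (0,T)$ we have $u(t) \in \mf{D}(A_\sigma)$ and $A_\sigma u(t) = F(t) - \partial_t u(t) \in L^2(\Omega)$. As usual, I would first run the argument assuming $u_T \in C^\infty_0(\Omega)$ and $F \in C^\infty_0((0,T)\times\Omega)$, so that $u$ is smooth and Propositions \ref{elliptic} and \ref{wp_strict} apply without qualification, and then pass to general data by the approximation procedure already used in the proofs of Propositions \ref{wp_mild}--\ref{wp_strict}.

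For the core estimate, I would apply Proposition \ref{elliptic} with $\phi = u(t)$, which gives, for a.e.\ $t$, that $\mc{N}_\sigma u(t)$ is well-defined in $L^2(\Gamma)$ with $\|\mc{N}_\sigma u(t)\|_{L^2(\Gamma)}^2 \lesssim \|A_\sigma u(t)\|_{L^2(\Omega)}^2 + \|u(t)\|_{L^2(\Omega)}^2$. To control the approximating traces $\eta^\delta[y^{2\kappa} D_y(y^{-\kappa} u(t))]$ uniformly in $\delta < y_0$, I would revisit the Cauchy-type bound appearing in the proof of Proposition \ref{elliptic},
\[
\|\eta^{y_0}[y^{2\kappa} D_y(y^{-\kappa}\phi)] - \eta^{y_1}[y^{2\kappa} D_y(y^{-\kappa}\phi)]\|_{L^2(\Gamma)}^2 \lesssim y_0^{1+2\kappa}\big(\|\nabla_{-\kappa}\nabla_\kappa\phi\|_{L^2(\Omega)}^2 + \|\phi\|_{H^1(\Omega)}^2\big),
\]
which, upon comparing $\eta^\delta[\,\cdot\,]$ with the limit $\mc{N}_\sigma\phi$ via the triangle inequality and \eqref{elliptic_H2}, yields $\sup_{0<\delta<y_0}\|\eta^\delta[y^{2\kappa} D_y(y^{-\kappa} u(t))]\|_{L^2(\Gamma)}^2 \lesssim \|A_\sigma u(t)\|_{L^2(\Omega)}^2 + \|u(t)\|_{L^2(\Omega)}^2$ as well. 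Integrating both pointwise-in-$t$ bounds over $(0,T)$ and then invoking \eqref{energy_strict} to estimate $\|A_\sigma u\|_{L^2((0,T)\times\Omega)}^2 \lesssim \|F\|_{L^2}^2 + \|\partial_t u\|_{L^2}^2 \lesssim \|u_T\|_{H^1(\Omega)}^2 + \|F\|_{L^2((0,T)\times\Omega)}^2$, and similarly for $\|u\|_{L^2((0,T)\times\Omega)}^2$, produces \eqref{hidden_reg}.

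For the well-definedness of $\mc{N}_\sigma u$ as an element of $L^2((0,T)\times\Gamma)$, I would integrate the Cauchy estimate above with $\phi = u(t)$ over $t \in (0,T)$, obtaining $\|\eta^{y_0}[\,\cdot\,u] - \eta^{y_1}[\,\cdot\,u]\|_{L^2((0,T)\times\Gamma)}^2 \lesssim y_0^{1+2\kappa}(\|u_T\|_{H^1}^2 + \|F\|_{L^2}^2) \to 0$ as $y_0 \searrow 0$; hence $(\eta^\delta[y^{2\kappa} D_y(y^{-\kappa} u)])_\delta$ is Cauchy in $L^2((0,T)\times\Gamma)$, and its limit---consistent with the slicewise limit $\mc{N}_\sigma u(t)$---is the desired trace. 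The argument is not heavy, since all the genuine analysis (the weighted second-order elliptic estimate and the Sobolev-trace bookkeeping) has already been carried out in Proposition \ref{elliptic}; I expect the only points requiring mild care to be the interchange of the $\delta \searrow 0$ limits with integration in $t$, and the measurability of $t \mapsto \mc{N}_\sigma u(t) \in L^2(\Gamma)$, which follows from its being an a.e.-$t$ limit of the measurable maps $t \mapsto \eta^\delta[y^{2\kappa} D_y(y^{-\kappa} u(t))]$.
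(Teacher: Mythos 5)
Your proposal is correct and rests on the same core computation as the paper's proof: the fundamental-theorem-of-calculus estimate in $y$ producing the factor $y_0^{1+2\kappa}$ (using $2\kappa > -1$), a Cauchy-in-$\delta$ argument for well-definedness of the trace, and the energy bound \eqref{energy_strict} to close. The only difference is organizational---you apply Proposition \ref{elliptic} and its internal Cauchy estimate on time slices and then integrate in $t$, whereas the paper runs the identical integrals directly at the space-time level using $y^{2\kappa}D_y(y^{-\kappa}u) \in L^2((0,T);H^1_{\mathrm{loc}}(\Omega))$---and your handling of the measurability and limit-interchange points is adequate.
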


\begin{proof}
As \eqref{regularity_strict} implies $y^{ 2 \kappa } D_y ( y^{ -\kappa } u ) \in L^2 ( ( 0, T ); H^1_{ \mathrm{loc} } ( \Omega ) )$, its traces on level sets of $y$ are well-defined.
Thus, using that $2 \kappa > -1$, we obtain, for $0 < y_1 < y_0 \ll 1$,
\begin{align}
\label{neumann_trace_0} &\| \eta^{ y_0 } y^{ 2 \kappa } D_y ( y^{ -\kappa } u ) - \eta^{ y_1 } y^{ 2 \kappa } D_y ( y^{ -\kappa } u ) \|_{ L^2 ( ( 0, T ) \times \Gamma ) } \\
\notag &\quad = \int_{ ( 0, T ) \times \Gamma } \bigg( \int_{ y_1 }^{ y_0 } \eta^s D_y [ y^{ 2 \kappa } D_y ( y^{ -\kappa } u ) ] \, ds \bigg)^2 \\
\notag &\quad \leq \int_{ y_1 }^{ y_0 } y^{ 2 \kappa } dy \cdot ( \| \nabla_{ -\kappa } \nabla_\kappa \phi \|_{ L^2 ( ( 0, T ) \times \Omega ) }^2 + \| \phi \|_{ L^2 ( ( 0, T ); H^1 ( \Omega ) }^2 ) \\
\notag &\quad \lesssim y_0^{ 1 + 2 \kappa } ( \| \nabla_{ -\kappa } \nabla_\kappa \phi \|_{ L^2 ( ( 0, T ) \times \Omega ) }^2 + \| \phi \|_{ L^2 ( ( 0, T ); H^1 ( \Omega ) }^2 ) \text{,}
\end{align}
By \eqref{energy_strict}, the right-hand side of the above vanishes as $y_0 \searrow 0$, and it follows that $\mc{N}_\sigma u$ is well-defined as an element of $L^2 ( ( 0, T ) \times \Gamma )$.

A similar estimate using also a cutoff in $y$ as in \eqref{cutoff_y} yields
\[
\| \mc{N}_\sigma u \|_{ L^2 ( ( 0, T ) \times \Gamma ) }^2 \lesssim \| \nabla_{ -\kappa } \nabla_\kappa \phi \|_{ L^2 ( ( 0, T ) \times \Omega ) }^2 + \| \phi \|_{ L^2 ( ( 0, T ); H^1 ( \Omega ) }^2 \text{,}
\]
and an application of \eqref{energy_strict} proves the bound for $\mc{N}_\sigma u$ in \eqref{hidden_reg}.
The corresponding estimates for the $\eta^\delta y^{ 2 \kappa } D_y ( y^{ -\kappa } u )$'s in \eqref{hidden_reg} proceed in the same manner as the above, except that one controls from $y = \delta > 0$ rather than from $y = 0$.
\end{proof}

\begin{proposition} \label{neumann_limits}
Let $u_T \in H^1_0 ( \Omega )$ and $F \in L^2 ( (0, T) \times \Omega )$, and let $u$ denote the corresponding solution to Problem (OI).
Then, for any $0 < y_0 \ll 1$,
\begin{align}
\label{hidden_dirichlet} \big\| \eta^{ y_0 } [ y^{ 2 \kappa } D_y ( y^{ -\kappa } u ) ] - \mc{N}_\sigma u \big\|_{ L^2 ( ( 0, T ) \times \Gamma ) }^2 &\lesssim y_0^{ 1 + 2 \kappa } ( \| u_T \|_{ H^1 ( \Omega ) }^2 + \| F \|_{ L^2 ( ( 0, T ) \times \Omega ) }^2 ) \text{,} \\
\notag \big\| \eta^{ y_0 } ( y^{ -1 + \kappa } u ) - \tfrac{1}{ 1 - 2 \kappa } \mc{N}_\sigma u \big\|_{ L^2 ( ( 0, T ) \times \Gamma ) }^2 &\lesssim y_0^{ 1 + 2 \kappa } ( \| u_T \|_{ H^1 ( \Omega ) }^2 + \| F \|_{ L^2 ( ( 0, T ) \times \Omega ) }^2 ) \text{,} \\
\notag \big\| \eta^{ y_0 } ( y^\kappa D_y u ) - \tfrac{ 1 - \kappa }{ 1 - 2 \kappa } \mc{N}_\sigma u \big\|_{ L^2 ( ( 0, T ) \times \Gamma ) }^2 &\lesssim y_0^{ 1 + 2 \kappa } ( \| u_T \|_{ H^1 ( \Omega ) }^2 + \| F \|_{ L^2 ( ( 0, T ) \times \Omega ) }^2 ) \text{,}
\end{align}
with the constants depending on $T$, $\Omega$, $\sigma$, $X$, $V$.
\end{proposition}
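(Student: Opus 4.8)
The plan is to establish all three limits simultaneously by expanding $u$ near $\Gamma$ in terms of its Neumann trace and the "error" controlled by the second-order weighted derivative. Set $g := y^{2\kappa} D_y(y^{-\kappa} u)$, so that $\mc{N}_\sigma u = \lim_{\delta \searrow 0} \eta^\delta g$, with the rate of convergence already quantified in \eqref{hidden_dirichlet}'s first line via Proposition \ref{neumann_trace}: integrating $D_y g$ from $0$ to $y_0$ along integral curves of $\nabla y$ and applying Cauchy--Schwarz with the weight $y^{2\kappa}$ gives $\|\eta^{y_0} g - \mc{N}_\sigma u\|^2 \lesssim y_0^{1+2\kappa}(\|\nabla_{-\kappa}\nabla_\kappa u\|^2 + \|u\|_{H^1}^2)$ in the appropriate $L^2((0,T)\times\Gamma)$ norms, and then \eqref{energy_strict} converts this to the right-hand side as stated. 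This is essentially a repackaging of the estimate already appearing inside the proof of Proposition \ref{neumann_trace}, now made quantitative in $y_0$.

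For the second limit, observe that $g = y^{2\kappa} D_y(y^{-\kappa} u) = y^{\kappa} D_y u - \kappa\, y^{\kappa - 1} |\nabla y|^2 u$, and since $y = d_\Gamma$ near $\Gamma$ we have $|\nabla y|^2 = 1$ there, so $g = y^\kappa D_y u - \kappa\, y^{\kappa-1} u$ in a boundary collar. The function $y^{-1+\kappa} u$ then satisfies, along an integral curve of $\nabla y$ (reparametrized by $y$), the ODE $D_y(y^{-1+\kappa} u) = y^{-1}[\,y^\kappa D_y u - (1-\kappa) y^{\kappa - 1} u\,] \cdot$ (with $|\nabla y|=1$), which rearranges to $D_y(y^{-1+\kappa}u) = y^{-1} g - (1-2\kappa) y^{\kappa - 2} u = y^{-1} g - (1 - 2\kappa) y^{-1}(y^{-1+\kappa} u)$. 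Writing $h := y^{-1+\kappa} u$ this is $D_y h + (1-2\kappa) y^{-1} h = y^{-1} g$; multiplying by the integrating factor $y^{1-2\kappa}$ gives $D_y(y^{1-2\kappa} h) = y^{-2\kappa} g$. Integrating from $0$ to $y_0$ — using that $y^{1-2\kappa} h = y^{-\kappa} u \to 0$ at $\Gamma$ by Proposition \ref{dirichlet_vanish} (valid since $u(t) \in H^1_0(\Omega)$) — yields $y_0^{1-2\kappa} h(y_0) = \int_0^{y_0} y^{-2\kappa} g\, dy$. Since $\eta^s g \to \mc{N}_\sigma u$ as $s \searrow 0$ with the rate above, replacing $g$ by $\mc{N}_\sigma u$ in the integral costs $\int_0^{y_0} y^{-2\kappa}\|\eta^y g - \mc{N}_\sigma u\|\,dy \lesssim y_0^{1-2\kappa} \cdot y_0^{(1+2\kappa)/2}$ after Cauchy--Schwarz in $y$, while $\int_0^{y_0} y^{-2\kappa} dy = (1-2\kappa)^{-1} y_0^{1-2\kappa}$; dividing through by $y_0^{1-2\kappa}$ gives $\|\eta^{y_0} h - (1-2\kappa)^{-1}\mc{N}_\sigma u\|^2 \lesssim y_0^{1+2\kappa}(\cdots)$ as claimed, modulo bookkeeping of the $L^2((0,T)\times\Gamma)$ norms and a final application of \eqref{energy_strict}. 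The third limit then follows algebraically: from $g = y^\kappa D_y u - \kappa y^{\kappa-1} u = y^\kappa D_y u - \kappa\, y^{-1+\kappa} u$ near $\Gamma$, we get $\eta^{y_0}(y^\kappa D_y u) = \eta^{y_0} g + \kappa\, \eta^{y_0}(y^{-1+\kappa} u)$, and combining the first two limits gives the limiting value $1 + \kappa(1-2\kappa)^{-1} = (1-\kappa)(1-2\kappa)^{-1}$, with the error bounds adding up.

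The main obstacle I anticipate is not conceptual but technical: carefully tracking that all these pointwise-in-$(t,x)$ ODE manipulations along integral curves of $\nabla y$ are legitimate when integrated against $L^2((0,T)\times\Gamma)$, i.e. that the Sobolev-trace interpretation of $\eta^\delta$ interacts correctly with the fundamental theorem of calculus in the $y$-variable. The excerpt's proof of Proposition \ref{neumann_trace} already does this for the bare convergence statement, so the additional work is to thread the explicit powers of $y_0$ through, and to handle the region where $y \neq d_\Gamma$ (away from the collar) — but there $y^{-1+\kappa}u$, $y^\kappa D_y u$, and $g$ are all in $L^2((0,T); H^1_{\mathrm{loc}})$ by \eqref{regularity_strict}, so their traces on $\{y = y_0\}$ versus on $\Gamma$ differ only by interior terms controlled via \eqref{energy_strict}, contributing harmlessly to the right-hand side. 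One should also note $|\nabla y| \equiv 1$ exactly on the collar $\{y < \delta_0\}$ (Definition \ref{bdf}), which keeps the ODE coefficients clean and avoids any $C^1$-regularity demand on $|\nabla y|^2$ beyond what Assumption \ref{ass.domain} provides.
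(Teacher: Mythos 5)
Your proposal is correct and follows essentially the same route as the paper: the first bound is the quantitative version of the telescoping estimate from Proposition \ref{neumann_trace}, the second rests on the identity $\eta^{y_0}(y^{\kappa-1}u) = y_0^{2\kappa-1}\int_0^{y_0} s^{-2\kappa}\,\eta^s[y^{2\kappa}D_y(y^{-\kappa}u)]\,ds$ (which your integrating-factor ODE reproduces exactly, using $\mc{D}_\sigma u = 0$ from Proposition \ref{dirichlet_vanish}), and the third is the same algebraic combination $y^\kappa D_y u = y^{2\kappa}D_y(y^{-\kappa}u) + \kappa\,y^{\kappa-1}u$. The paper's only cosmetic difference is that it bounds the $s$-integral via Minkowski's inequality and a supremum over $0<s<y_0$ rather than your Cauchy--Schwarz-in-$y$ bookkeeping; both yield the stated $y_0^{1+2\kappa}$ rate.
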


\begin{proof}
The first part of \eqref{hidden_dirichlet} follows immediately from the estimate \eqref{neumann_trace_0}, once we take $y_1 \searrow 0$ and then apply \eqref{energy_strict}.
For the second part, we first note
\begin{align*}
&\big\| \eta^{ y_0 } y^{ \kappa - 1 } u - \tfrac{1}{ 1 - 2 \kappa } \mc{N}_\sigma u \big\|_{ L^2 ( ( 0, T ) \times \Omega ) }^2 \\
&\quad = \int_{ ( 0, T ) \times \Gamma } \bigg[ y_0^{ 2 \kappa - 1 } \int_0^{ y_0 } \eta^s D_y ( y^{ -\kappa } u ) \, ds - \tfrac{1}{ 1 - 2 \kappa } \mc{N}_\sigma u \bigg]^2 \\
&\quad = \int_{ ( 0, T ) \times \Gamma } \bigg( y_0^{ 2 \kappa - 1 } \int_0^{ y_0 } s^{ -2 \kappa } [ \eta^s y^{ 2 \kappa } D_y ( y^{ -\kappa } u ) - \mc{N}_\sigma u \big] \, ds \bigg)^2 \text{,}
\end{align*}
where we used that $\mc{D}_\sigma u = 0$ from Proposition \ref{dirichlet_vanish}.
By Minkowski's inequality,
\begin{align*}
&\big\| \eta^{ y_0 } y^{ \kappa - 1 } u - \tfrac{1}{ 1 - 2 \kappa } \mc{N}_\sigma u \big\|_{ L^2 ( ( 0, T ) \times \Omega ) }^2 \\
&\quad \lesssim y_0^{ 2 \kappa - 1 } \int_0^{ y_0 } s^{ -2 \kappa } \| \eta^s y^{ 2 \kappa } D_y ( y^{ -\kappa } u ) - \mc{N}_\sigma u \|_{ L^2 ( ( 0, T ) \times \Gamma ) } \, ds \\
&\quad \lesssim \sup_{ 0 < s < y_0 } \| \eta^s y^{ 2 \kappa } D_y ( y^{ -\kappa } u ) - \mc{N}_\sigma u \|_{ L^2 ( ( 0, T ) \times \Gamma ) } \text{,}
\end{align*}
where we also noted that $2 \kappa > -1$ in the last step.
The second estimate in \eqref{hidden_dirichlet} now follows from the above and from the first part.
Finally, the third limit of \eqref{hidden_dirichlet} is now an immediate consequence of the first two, since
\[
y^\kappa D_y u = y^{ 2 \kappa } D_y ( y^{ -\kappa } u ) + \kappa y^{ \kappa - 1 } u \text{.} \qedhere
\]
\end{proof}

The following technical observation will also be needed to deal with an irregular boundary term arising in our Carleman estimates:

\begin{proposition} \label{stupid_boundary}
Let $u_T \in H^1_0 ( \Omega )$ and $F \in L^2 ( (0, T) \times \Omega )$, and let $u$ denote the corresponding solution to Problem (OI).
Also, let $w \in C^1 ( [ 0, T ] \times \Omega )$ satisfy
\begin{equation}
\label{stupid_weight} \| w \|_{ L^\infty ( ( 0, T ) \times \Omega ) } + \| \partial_t w \|_{ L^\infty ( ( 0, T ) \times \Omega ) } + \| y^{ -2 p } \nabla w \|_{ L^\infty ( ( 0, T ) \times \Omega ) } < \infty
\end{equation}
for some $p < 0$ satisfying $2 p - \kappa > -\frac{1}{2}$.
Then, for any $0 < \delta \ll 1$:
\begin{itemize}
\item The following quantities are well-defined in the trace sense:
\begin{align}
\label{stupid_trace} B_0 ( u_T, F; \delta ) &:= \int_{ ( 0, T ) \times \{ y = \delta \} } w \cdot \partial_t ( y^{ -\kappa } u ) \cdot y^{ -1 + \kappa } u \text{,} \\
\notag B_1 ( u_T, F; \delta ) &:= \int_{ ( 0, T ) \times \{ y = \delta \} } w \cdot \partial_t ( y^{ -\kappa } u ) \cdot y^{ 2 \kappa } D_y ( y^{ -\kappa } u ) \text{.}
\end{align}

\item The following bound holds, with constant depending on $T$, $\Omega$, $\sigma$, $X$, $V$, $p$:
\begin{equation}
\label{stupid_estimate} | B_0 ( u_T, F; \delta ) | + | B_1 ( u_T, F; \delta ) | \lesssim \| u_T \|_{ H^1 ( \Omega ) }^2 + \| F \|_{ L^2 ( ( 0, T ) \times \Omega ) }^2 \text{.}
\end{equation}
\end{itemize}
Furthermore, the following boundary limits hold:
\begin{equation}
\label{stupid_limit} \lim_{ \delta \searrow 0 } B_0 ( u_T, F; \delta ) = 0 \text{,} \qquad \lim_{ \delta \searrow 0 } B_1 ( u_T, F; \delta ) = 0 \text{.}
\end{equation}
\end{proposition}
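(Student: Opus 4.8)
Here is how I would approach the proof of Proposition~\ref{stupid_boundary}.

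The plan is to prove everything first for smooth data and then extend by density. Concretely, I would approximate $(u_T,F)$ in $H^1_0(\Omega)\times L^2((0,T)\times\Omega)$ by $u_{T,k}\in C^\infty_0(\Omega)$ and $F_k\in C^\infty_0((0,T)\times\Omega)$, with corresponding solutions $u_k$ of Problem~(OI); by the proof of Proposition~\ref{wp_strict} these satisfy $u_k\in C^0([0,T];\mf{D}(A_\sigma^2))$ and $\partial_t u_k\in C^0([0,T];\mf{D}(A_\sigma))$, so that all the traces in \eqref{stupid_trace} and the integrations by parts below are classically justified. Since $B_0$ and $B_1$ are quadratic in $u$---hence in $(u_T,F)$---once \eqref{stupid_estimate} is proved for such data \emph{with a constant independent of $\delta$}, polarization shows that $B_0(\cdot;\delta)$ and $B_1(\cdot;\delta)$ extend to continuous functionals of $(u_T,F)$, uniformly in $\delta$; this extension (equivalently, the regularized formulas below) is what I would take as the definition of $B_0,B_1$ for general data, and it reduces the limits \eqref{stupid_limit} to the case of smooth data.

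For $B_0$, the key point is the identity $\partial_t(y^{-\kappa}u)\cdot y^{-1+\kappa}u=\tfrac12\partial_t(y^{-1}u^2)$, which holds because $y$ is time-independent. Integrating by parts in $t$,
\begin{equation*}
B_0(u_T,F;\delta)=\tfrac12\int_{\{y=\delta\}}\big[\,w\,y^{-1}u^2\,\big]_{t=0}^{t=T}-\tfrac12\int_{(0,T)\times\{y=\delta\}}(\partial_t w)\,y^{-1}u^2 \text{,}
\end{equation*}
where each term is well-defined for $u\in C^0([0,T];H^1_0(\Omega))$. A Cauchy--Schwarz estimate along integral curves of $\nabla y$ (as in the proof of Proposition~\ref{dirichlet_vanish}) gives $\int_{\{y=\delta\}}y^{-1}\phi^2\lesssim\int_{\{0<y<\delta\}}|D_y\phi|^2\leq\|\phi\|_{H^1(\Omega)}^2$ for $\phi\in H^1_0(\Omega)$, uniformly in small $\delta$; applied to $\phi=u(0),u(T),u(t)$ together with \eqref{energy_strict}, this yields the bound on $B_0$ in \eqref{stupid_estimate}. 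The same inequality gives $\int_{\{y=\delta\}}y^{-1}\phi^2\to0$ as $\delta\searrow0$ for $\phi\in H^1_0(\Omega)$, and dominated convergence in $t$ then gives $B_0(u_T,F;\delta)\to0$.

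For $B_1$ I would first note that near $\Gamma$ (where $y=d_\Gamma$, $|\nabla y|=1$) one has $y^{2\kappa}D_y(y^{-\kappa}u)=y^\kappa D_y u-\kappa y^{\kappa-1}u$, so for $\delta$ small $B_1=B_1'-\kappa B_0$, where $B_1':=\int_{(0,T)\times\{y=\delta\}}w(\partial_t u)(D_y u)$. I would handle $B_1'$ via the divergence theorem on $\{y>\delta\}$ applied to $w(\partial_t u)\nabla u$, giving $\int_{\{y=\delta\}}w(\partial_t u)(D_y u)=-\int_{\{y>\delta\}}\nabla\cdot(w(\partial_t u)\nabla u)$; substituting $\Delta u=F-\partial_t u-X\cdot\nabla u-Vu-\sigma d_\Gamma^{-2}u$ from \eqref{heat_ex}, and integrating by parts in $t$ in the terms $\tfrac{w}{2}\partial_t|\nabla u|^2$ and $\tfrac{\sigma w}{2}\partial_t(d_\Gamma^{-2}u^2)$, reduces $B_1'$ to endpoint-in-time terms $\int_\Omega[\,w|\nabla u|^2-\sigma w d_\Gamma^{-2}u^2\,]_0^T$ plus bulk integrals over $(0,T)\times\Omega$ of $w(\partial_t u)^2$, $(\partial_t w)|\nabla u|^2$, $(\partial_t w)d_\Gamma^{-2}u^2$, $w(\partial_t u)(X\cdot\nabla u)$, $w(\partial_t u)Vu$, $w(\partial_t u)F$, and $(\partial_t u)(\nabla w\cdot\nabla u)$. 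All but the last are bounded by $\|u_T\|_{H^1(\Omega)}^2+\|F\|_{L^2((0,T)\times\Omega)}^2$ using \eqref{energy_strict}, the Hardy inequality \eqref{hardy_main}, and Definition~\ref{def.lower_order}.

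The remaining term $\int_{(0,T)\times\Omega}(\partial_t u)(\nabla w\cdot\nabla u)$ is where the hypothesis on $w$ enters, and I expect this to be the main obstacle. Using $\|y^{-2p}\nabla w\|_{L^\infty}<\infty$, Cauchy--Schwarz, and \eqref{energy_strict}, it is enough to prove the weighted estimate
\begin{equation*}
\int_{(0,T)\times\Omega}y^{4p}|\nabla u|^2\lesssim\|u_T\|_{H^1(\Omega)}^2+\|F\|_{L^2((0,T)\times\Omega)}^2 \text{,}
\end{equation*}
whose left side is finite exactly when $4p-2\kappa>-1$, i.e.\ $2p-\kappa>-\tfrac12$. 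I would obtain this by splitting $\nabla u$ into its $D_y$-component and its component tangent to the level sets of $y$: the former is controlled by the Neumann-trace estimate \eqref{hidden_dirichlet} (so that $\eta^s(y^\kappa D_y u)$ is bounded in $L^2((0,T)\times\Gamma)$ uniformly in $s$), and the latter by the interior weighted-$H^2$ bound \eqref{elliptic_H2} together with the vanishing $\mc{D}_\sigma u=0$, which forces the tangential derivatives of $y^{-\kappa}u$ to decay by an extra power of $y$ at $\Gamma$ (Proposition~\ref{dirichlet_vanish}). This completes \eqref{stupid_estimate}. Finally, for \eqref{stupid_limit} on smooth data, differentiating \eqref{heat_ex} in $t$ shows $\partial_t u$ is itself a solution of Problem~(OI) with forcing $\partial_t F$ and final data $-A_\sigma u_T$, so $\partial_t u\in L^2((0,T);H^1_0(\Omega))$ by Proposition~\ref{wp_mild}; then Proposition~\ref{dirichlet_vanish} gives $\|\eta^\delta(y^{-\kappa}\partial_t u)\|_{L^2((0,T)\times\Gamma)}^2\lesssim\delta^{1-2\kappa}\|\partial_t u\|_{L^2((0,T);H^1(\Omega))}^2\to0$, and combining this with Cauchy--Schwarz and the uniform-in-$\delta$ $L^2((0,T)\times\Gamma)$-bounds on $\eta^\delta(y^{-1+\kappa}u)$ and $\eta^\delta(y^{2\kappa}D_y(y^{-\kappa}u))$ from Propositions~\ref{neumann_trace} and \ref{neumann_limits} yields $B_0(u_{T,k},F_k;\delta),B_1(u_{T,k},F_k;\delta)\to0$ for each $k$; the uniform estimate \eqref{stupid_estimate} transfers this to general $(u_T,F)$.
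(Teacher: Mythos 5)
Your overall architecture (smooth data plus density, integration by parts in $t$ for $B_0$, converting the $B_1$ boundary term into bulk integrals, and using $\mc{D}_\sigma ( \partial_t u ) \equiv 0$ together with Propositions \ref{neumann_trace}--\ref{neumann_limits} for the limits \eqref{stupid_limit}) matches the paper, and your treatment of $B_0$ is correct --- in fact slightly sharper than \eqref{dirichlet_extra}, since the Cauchy--Schwarz bound $\int_{\{y=\delta\}} y^{-1}\phi^2 \lesssim \int_{\{0<y<\delta\}} |D_y \phi|^2$ gives the vanishing limit directly.

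The gap is in $B_1$. By passing through the divergence theorem on all of $\{ y > \delta \}$ and substituting the equation for $\Delta u$, you generate the bulk term $\int_{(0,T)\times\Omega} ( \partial_t u ) ( \nabla w \cdot \nabla u )$, and since the hypothesis \eqref{stupid_weight} only gives $| \nabla w | \lesssim y^{2p}$ with no directional information, you are forced to prove $\int y^{4p} | \nabla u |^2 \lesssim \| u_T \|_{H^1}^2 + \| F \|_{L^2}^2$ for the \emph{full} gradient. The normal component is indeed controlled by the uniform level-set bounds of Propositions \ref{neumann_trace} and \ref{neumann_limits} together with $4p - 2\kappa > -1$, exactly as you say. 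But the tangential component is not: $y^{4p}$ is unbounded (as $p<0$), so $\| \nabla u \|_{L^2}$ alone does not suffice, and neither \eqref{elliptic_H2} nor $\mc{D}_\sigma u = 0$ directly yields a quantitative weighted bound on $\slashed{\nabla} u$ near $\Gamma$ --- the heuristic "$\slashed{\nabla} ( y^{-\kappa} u )$ decays by an extra power of $y$" would require uniform trace estimates for tangential derivatives on level sets of $y$, which are nowhere established and do not follow by a one-line argument from the results you cite (a Gr\"onwall-type attempt in $y$ runs the wrong way because of the $\kappa y^{-1} \slashed{\nabla} u$ term). The paper avoids this entirely: it writes the trace on $\{ y = \delta \}$ as $\int_\delta^{2y_0} \eta^s D_y [ \chi w \cdot \partial_t ( y^{-\kappa} u ) \cdot y^{2\kappa} D_y ( y^{-\kappa} u ) ] \, ds$, so that the only place the singular weight $y^{2p}$ appears (when $D_y$ hits $w$) it multiplies the purely normal quantity $y^{2\kappa} D_y ( y^{-\kappa} u )$, for which \eqref{hidden_reg} supplies the needed uniform-in-level-set $L^2$ control; the full-gradient term $\partial_t ( | \nabla_\kappa u |^2 )$ appears only with the bounded weight $\chi w$ and is handled by a $t$-integration by parts. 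To repair your argument you would either need to prove the tangential weighted estimate as a separate lemma, or restructure the $B_1$ computation so that $\nabla w$ only ever pairs with $D_y$-derivatives.
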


\begin{proof}
First, let us assume $u_T \in C^\infty_0 ( \Omega )$ and $F \in C^\infty_0 ( ( 0, T ) \times \Omega )$, which implies
\[
\partial_t u \in C^0 ( [ 0, T ]; H^1_0 ( \Omega ) ) \text{,} \qquad y^{ 2 \kappa } D_y ( y^{ -2 \kappa } u ), y^{ -1 + \kappa } u \in L^2 ( ( 0, T ); H^1_{ \textrm{loc} } ( \Omega ) ) \text{.}
\]
In particular, $B_0 ( u_T, F; \delta )$ and $B_1 ( u_T, F; \delta )$ in \eqref{stupid_trace} are well-defined.
Moreover,
\begin{align}
\label{eql.stupid_1} | B_0 ( u_T, F; \delta ) | &= \bigg| \int_{ ( 0, T ) \times \{ y = \delta \} } w y^{-1} \, \partial_t ( u^2 ) \, \bigg| \\
\notag &= \sup_{ 0 \leq s \leq T } \bigg| \int_{ \{ y = \delta \} } ( w y^{-1} \, u^2 ) |_{ t = s } \, \bigg| + \bigg| \int_{ ( 0, T ) \times \{ y = \delta \} } \partial_t w \, y^{-1} u^2 \, \bigg| \\
\notag &\lesssim \| u_T \|_{ H^1 ( \Omega ) }^2 + \| F \|_{ L^2 ( ( 0, T ) \times \Omega ) }^2 \text{,}
\end{align}
where in the last step, we applied \eqref{hardy_norm_equiv}, \eqref{dirichlet_extra}, \eqref{energy_strict}, and \eqref{stupid_weight}.

For $B_1$, notice that by multiplying $w$ by a regular function (corresponding to a change of volume form), it suffices to bound instead the quantity
\begin{equation}
\label{eql.stupid_0} \bar{B}_1 ( u_T, F; \delta ) := \int_{ ( 0, T ) \times \Gamma } \eta^{ \delta } [ w \cdot \partial_t ( y^{ -\kappa } u ) \cdot y^{ 2 \kappa } D_y ( y^{ -\kappa } u ) ] \text{.}
\end{equation}
Letting $\chi$ be a cutoff function defined as in \eqref{cutoff_y}, with $\delta \ll y_0 \ll 1$, we have
\begin{align}
\label{eql.stupid_10} | \bar{B}_1 ( u_T, F; \delta ) | &= \bigg| \int_{ ( 0, T ) \times \Gamma } \int_\delta^{ 2 y_0 } \eta^s D_y [ \chi w \cdot \partial_t ( y^{ -\kappa } u ) \cdot y^{ 2 \kappa } D_y ( y^{ -\kappa } u ) ] \, ds \, \bigg| \\
\notag &\leq \int_{ ( 0, T ) \times \Gamma } \int_\delta^{ 2 y_0 } \eta^s | \partial_t u | [ | \nabla_{ -\kappa } \nabla_\kappa u | + | y^{ 2 p + \kappa } D_y ( y^{ -\kappa } u ) | ] \, ds \\
\notag &\qquad + \bigg| \frac{1}{2} \int_{ ( 0, T ) \times \Gamma } \int_\delta^{ 2 y_0 } \eta^s [ \chi w \cdot \partial_t ( | \nabla_\kappa u |^2 ) ] \, ds \, \bigg| \\
\notag :\!\!&= I_1 + I_2 + I_3 \text{,}
\end{align}
where we also made use of \eqref{stupid_weight} to obtain $I_1$ and $I_2$.

For $I_3$, we integrate by parts and then apply \eqref{hardy_norm_equiv}, \eqref{energy_strict}, and \eqref{stupid_weight}:
\begin{align}
\label{eql.stupid_11} I_3 &\lesssim \sup_{ 0 \leq s \leq T } \int_\Omega | \nabla_\kappa u |^2 + \int_{ ( 0, T ) \times \Omega } | \nabla_\kappa u |^2 \\
\notag &\lesssim \| u_T \|_{ H^1 ( \Omega ) }^2 + \| F \|_{ L^2 ( ( 0, T ) \times \Omega ) }^2 \text{.}
\end{align}
In addition, a direct application of \eqref{energy_strict} yields
\begin{equation}
\label{eql.stupid_12} I_1 \lesssim \| u_T \|_{ H^1 ( \Omega ) }^2 + \| F \|_{ L^2 ( ( 0, T ) \times \Omega ) }^2 \text{.}
\end{equation}
For $I_2$, we apply the H\"older inequality to bound
\begin{align*}
I_2^2 &\lesssim \| \partial_t u \|_{ L^2 ( ( 0, T ) \times \Omega ) }^2 \int_{ ( 0, T ) \times \Gamma } \int_0^{ 2 y_0 } | \eta^s y^{ 2 p - \kappa } y^{ 2 \kappa } D_y ( y^{ -\kappa } u ) |^2 \, ds \\
&\lesssim \| \partial_t u \|_{ L^2 ( ( 0, T ) \times \Omega ) }^2 \int_0^{ 2 y_0 } s^{ 4 p - 2 \kappa } \, ds \sup_{ 0 < s < 2 y_0 } \int_{ ( 0, T ) \times \Gamma } | \eta^s y^{ 2 \kappa } D_y ( y^{ -\kappa } u ) |^2 \text{.}
\end{align*}
Combining the above with \eqref{energy_strict}, \eqref{hidden_reg}, and the assumption $4 p - 2 \kappa > -1$ yields
\begin{equation}
\label{eql.stupid_13} I_3 \lesssim \| u_T \|_{ H^1 ( \Omega ) }^2 + \| F \|_{ L^2 ( ( 0, T ) \times \Omega ) }^2 \text{.}
\end{equation}

Now, from \eqref{eql.stupid_1}, \eqref{eql.stupid_10}--\eqref{eql.stupid_13}, we conclude that \eqref{stupid_estimate} holds for $u_T \in C^\infty_0 ( \Omega )$ and $F \in C^\infty_0 ( ( 0, T ) \times \Omega )$.
An approximation argument based on the bound \eqref{stupid_estimate} then yields that both $B_0 ( u_T, F; \delta )$ and $B_1 ( u_T, F; \delta )$ can be continuously extended to the general case $u_T \in H^1_0 ( \Omega )$ and $F \in L^2 ( ( 0, T ) \times \Omega )$, and that \eqref{stupid_estimate} still holds in this setting.
In particular, this completes the proofs of \eqref{stupid_trace} and \eqref{stupid_estimate}.

It remains only to show \eqref{stupid_limit}.
For this, we first observe that by similar estimates as above, but now set between two level sets of $y$, we derive that
\[
| B_0 ( u_T, F; y_0 ) - B_0 ( u_T, F; y_1 ) | + | B_1 ( u_T, F; y_0 ) - B_1 ( u_T, F; y_1 ) | \rightarrow 0
\]
as $y_0, y_1 \searrow 0$, for all $u_T \in H^1_0 ( \Omega )$, $F \in L^2 ( ( 0, T ) \times \Omega )$.
Thus, the boundary limits
\begin{equation}
\label{eql.stupid_20} B_0 ( u_T, F; 0 ) := \lim_{ \delta \searrow 0 } B_0 ( u_T, F; \delta ) \text{,} \qquad B_1 ( u_T, F; 0 ) := \lim_{ \delta \searrow 0 } B_1 ( u_T, F; \delta )
\end{equation}
are well-defined, and \eqref{stupid_estimate} implies the inequality
\begin{equation}
\label{eql.stupid_21} | B_0 ( u_T, F; 0 ) | + | B_1 ( u_T, F; 0 ) | \lesssim \| u_T \|_{ H^1 ( \Omega ) }^2 + \| F \|_{ L^2 ( ( 0, T ) \times \Omega ) }^2 \text{.}
\end{equation}

Finally, if $u_T \in C^\infty_0 ( \Omega )$ and $F \in C^\infty_0 ( ( 0, T ) \times \Omega )$, then Proposition \ref{dirichlet_vanish} implies $\mc{D}_\sigma ( \partial_t u ) \equiv 0$, hence Propositions \ref{neumann_trace} and \ref{neumann_limits} yield
\[
B_0 ( u_T, F; 0 ) = B_1 ( u_T, F; 0 ) = 0 \text{.}
\]
Since $C^\infty_0 ( \Omega )$ and $C^\infty_0 ( ( 0, T ) \times \Omega )$ are dense in $H^1_0 ( \Omega )$ and $L^2 ( ( 0, T ) \times \Omega )$, then an approximation argument using \eqref{eql.stupid_21} yields \eqref{stupid_limit} for general $u_T \in H^1_0 ( \Omega )$ and $F \in L^2 ( ( 0, T ) \times \Omega )$ as well, which completes the proof of \eqref{stupid_limit}.
\end{proof}

\subsection{Dual solutions}

Finally, we treat the dual theory of solutions for Problem (C).
As in \cite{new}, the first step is to define solutions at $H^{-1}$-regularity:

\begin{definition} \label{transposition}
Given $v_0 \in H^{-1} ( \Omega )$ and $v_d \in L^2 ( (0, T) \times \Ga )$, we call
\[
v \in C^0 ( [ 0, T ]; H^{-1} ( \Om ) ) \cap L^2 ( ( 0, T ) \times \Omega )
\]
a \emph{dual} (or \emph{transposition}) \emph{solution} of Problem (C) iff for any $F \in L^2 ( ( 0, T ) \times \Om )$,
\begin{equation}
\label{weak_soln} \int_{ ( 0, T ) \times \Omega } F v = -\int_\Omega u (0) \, v_0 + \int_{ ( 0, T ) \times \Gamma } \mc{N}_\sigma u \, v_d \text{,}
\end{equation}
where $u$ is the solution to Problem (OI) with $F$ as above, $u_T \equiv 0$, and
\begin{equation}
\label{dual_XV} X := -Y \text{,} \qquad V := W - \nabla \cdot Y \text{.}
\end{equation}
\end{definition}

\begin{remark}
\eqref{dual_XV} ensures that \eqref{heat_ctl} is the adjoint equation to \eqref{heat_ex}.
Moreover, note that if $( Y, W ) \in \mc{Z}_0$, then $( X, V )$ from \eqref{dual_XV} also lies in $\mc{Z}_0$.
\end{remark}

Substantial revisions are needed to extend the theory of dual solutions in \cite{new} to our more general setting of $y \in C^2 ( \Omega )$ and $( Y, W ) \in \mc{Z}_0$.
Thus, we provide a new and more detailed development of the key regularity properties below.

\begin{proposition} \label{wp_weak}
For any $v_0 \in H^{-1} ( \Omega )$ and $v_d \in L^2 ( (0, T) \times \Ga )$, there exists a unique weak solution $v$ of Problem (C).
In addition, $v$ satisfies the bound
\begin{equation}
\label{energy_weak} \| v \|_{ L^\infty ( [ 0, T ]; H^{-1} ( \Omega ) ) }^2 + \| v \|_{ L^2 ( ( 0, T ) \times \Omega ) }^2 \lesssim \| v_0 \|_{ H^{-1} ( \Omega ) }^2 + \| v_d \|_{ L^2 ( ( 0, T ) \times \Gamma ) }^2 \text{,}
\end{equation}
where the constant depends only on $T, \Omega, \sigma, Y, W$.

Furthermore, if $u_T \in H^1_0 ( \Omega )$, and if $u$ is the corresponding solution to Problem (O), with $( X, V )$ as in \eqref{dual_XV}, then the following identity holds:
\begin{equation}
\label{weak_id} \int_\Omega [ u_T \, v (T) - u (0) \, v_0 ] + \int_{ ( 0, T ) \times \Gamma } \mc{N}_\sigma u \, v_d = 0 \text{.}
\end{equation}
\end{proposition}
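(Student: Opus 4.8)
The plan is to construct $v$ by the transposition (duality) method, with the $H^1_0$‑theory of Problems (OI) and (O) (Propositions \ref{wp_strict} and \ref{neumann_trace}) as dual input, and then to extract the finer regularity and the identity \eqref{weak_id} from a pointwise‑in‑time duality pairing together with a careful treatment of the inhomogeneous boundary data. First, fixing $v_0\in H^{-1}(\Omega)$ and $v_d\in L^2((0,T)\times\Gamma)$, I would consider on $L^2((0,T)\times\Omega)$ the linear functional $F\mapsto -\int_\Omega u(0)\,v_0+\int_{(0,T)\times\Gamma}\mc{N}_\sigma u\,v_d$, where $u$ solves Problem (OI) with forcing $F$, vanishing final data $u_T\equiv0$, and coefficients $(X,V)$ from \eqref{dual_XV}. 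Proposition \ref{wp_strict} with $u_T=0$ gives $u(0)\in H^1_0(\Omega)$ with $\|u(0)\|_{H^1(\Omega)}\lesssim\|F\|_{L^2((0,T)\times\Omega)}$, and Proposition \ref{neumann_trace} with $u_T=0$ gives $\mc{N}_\sigma u\in L^2((0,T)\times\Gamma)$ with the analogous bound, so this functional is bounded by $(\|v_0\|_{H^{-1}(\Omega)}+\|v_d\|_{L^2((0,T)\times\Gamma)})\|F\|_{L^2((0,T)\times\Omega)}$. The Riesz representation theorem then produces a unique $v\in L^2((0,T)\times\Omega)$ realizing \eqref{weak_soln}, along with the $L^2$‑half of \eqref{energy_weak}; uniqueness of the transposition solution is immediate from this characterization.

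To promote $v$ to $C^0([0,T];H^{-1}(\Omega))$ with the remaining bound in \eqref{energy_weak}, and to obtain \eqref{weak_id}, I would introduce, for $t_0\in[0,T]$ and $\psi\in H^1_0(\Omega)$, the pairing $-\int_\Omega u^{t_0,\psi}(0)\,v_0+\int_{(0,t_0)\times\Gamma}\mc{N}_\sigma u^{t_0,\psi}\,v_d$, where $u^{t_0,\psi}$ solves Problem (O) on $(0,t_0)$ with final datum $\psi$ at $t_0$. The same estimates show this defines $v(t_0)\in H^{-1}(\Omega)$ with $\|v(t_0)\|_{H^{-1}(\Omega)}\lesssim\|v_0\|_{H^{-1}(\Omega)}+\|v_d\|_{L^2((0,T)\times\Gamma)}$ uniformly in $t_0$, and continuity of $t_0\mapsto v(t_0)$ follows from the semigroup formula \eqref{mild_duhamel} for $u^{t_0,\psi}$ together with Propositions \ref{wp_strict} and \ref{neumann_trace}. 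Testing \eqref{weak_soln} against $F$ of separated form $f(t)\psi(x)$, expanding the Problem (OI) solution by Duhamel as a superposition of the $u^{\tau,\psi}$'s and applying Fubini, identifies $v(\cdot)$ a.e.\ with the $L^2$‑function from the Riesz step; and evaluating the pairing at $t_0=T$ — where $u^{T,\psi}$ is exactly the Problem (O) solution with $u_T=\psi$ in \eqref{weak_id} — yields \eqref{weak_id}.

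The genuinely new point, forced by the weakened hypotheses $\Gamma\in C^2$, $(Y,W)\in\mc{Z}_0$, arises in making rigorous sense of $v$ as a solution of Problem (C) when $v_d\neq0$ (needed, e.g., to justify the integration by parts underlying the previous paragraph). I would work first on a dense class of regular data, say $v_0\in H^1_0(\Omega)$ and $v_d\in C^\infty_0((0,T)\times\Gamma)$, peel off a local extension of the boundary data — e.g.\ $\Phi:=\chi(y)\,y^\kappa\,\tilde v_d$ with $\chi$ supported near $\Gamma$ and $\tilde v_d$ an extension of $v_d$ into $(0,T)\times\Omega$, so that $\mc{D}_\sigma\Phi=v_d$ and $\Phi(0)=0$ — and solve for $\hat v:=v-\Phi$, which satisfies $(-\partial_t+B_\sigma)\hat v=G$ with $\hat v(0)=v_0\in H^1_0(\Omega)$ and $\mc{D}_\sigma\hat v=0$. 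Here the forcing $G:=-(-\partial_t+B_\sigma)\Phi$ is \emph{not} square‑integrable near $\Gamma$: the relation $\sigma=\kappa(1-\kappa)$ cancels the $d_\Gamma^{\kappa-2}$‑contribution in $\Delta_\sigma(y^\kappa\tilde v_d)$, leaving a $d_\Gamma^{\kappa-1}$‑singular forcing, which fails to be $L^2$ because $\kappa<\tfrac{1}{2}$, and the limited regularity of $\Gamma$ and of $(Y,W)$ rules out removing a further term of the boundary expansion. I would therefore set up a transposition‑type notion of solution for $(-\partial_t+B_\sigma)\hat v=G$, $\mc{D}_\sigma\hat v=0$, accommodating such singular $G$: the pairing of $G$ against a Problem (OI) solution $u$ is finite because the vanishing $\mc{D}_\sigma u=0$ forces $u\sim d_\Gamma^{1-\kappa}$, so that $uG\sim d_\Gamma^{0}$ is integrable near $\Gamma$, with the boundary terms produced along the way controlled by Proposition \ref{stupid_boundary} (whose hypothesis $2p-\kappa>-\tfrac{1}{2}$ matches the $d_\Gamma^{\kappa-1}$ strength). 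With $v=\Phi+\hat v$ realized in this way, multiplying the equation of Problem (OI) by $v$ and integrating by parts in space and time — the first‑ and zeroth‑order terms cancelling by \eqref{dual_XV} and the spatial boundary contribution collapsing to $\int_{(0,T)\times\Gamma}\mc{N}_\sigma u\,v_d$ by Propositions \ref{dirichlet_vanish}, \ref{neumann_trace}, \ref{neumann_limits} and $\mc{D}_\sigma u=0$ — reproduces \eqref{weak_soln} (so $v$ is the transposition solution), gives \eqref{weak_id}, and yields \eqref{energy_weak}; the general case follows by density using \eqref{energy_weak}.

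The main obstacle is precisely this last construction: since $\kappa<\tfrac{1}{2}$, no naive strict‑solution theory applies to the $\hat v$‑equation, so the singular‑forced notion of solution — together with the control of its singular boundary pairings, which is the role of Proposition \ref{stupid_boundary} — must be set up with care; the remainder is a lengthy but essentially routine assembly of transposition and integration‑by‑parts arguments on top of the $H^1_0$‑theory of this section.
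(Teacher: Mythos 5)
Your proposal follows the same overall architecture as the paper's proof: the Riesz-representation step on $L^2((0,T)\times\Omega)$ using \eqref{energy_strict} and \eqref{hidden_reg}, the lift of the boundary datum by $y^\kappa$ times a normal-derivative-free extension (the paper's $y^\kappa G_d$ with \eqref{G_d}), the recognition that the residual forcing is only $O(y^{\kappa-1})=y^{-1}\cdot O(y^\kappa)$ and hence not square-integrable, and the recovery of the $L^\infty H^{-1}$ bound, continuity, and \eqref{weak_id} from the duality identity applied on subintervals $(t_-,t_+)$. The one genuine difference is how the singular-forcing problem for $\hat v$ is realized: you propose an abstract transposition notion built on the finiteness of the pairing $\int uG$ (which indeed holds, though the paper gets it more cheaply from Hardy, $\int y^{-1}Mu=\int M\,(y^{-1}u)$ with $y^{-1}u\in L^2$ for $u\in H^1_0$, without invoking the Neumann asymptotics $u\sim d_\Gamma^{1-\kappa}$), whereas the paper truncates the forcing to $-y^{-1}M\chi_{\{y>l^{-1}\}}$, solves each truncated problem by the semigroup theory, and shows the resulting $v_l$ form a Cauchy sequence in $C^0([0,T];L^2(\Omega))$. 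The truncation route buys something concrete: each $v_l$ is a strict solution, so the integration by parts in \eqref{duality_10} and the identification of the boundary terms via Propositions \ref{dirichlet_vanish} and \ref{neumann_trace} are legitimate at each finite $l$, and one only passes to the limit in a scalar identity; your direct transposition definition would require you to re-derive that integration-by-parts identity within the weaker solution class, which is where the work hides. Two minor corrections: Proposition \ref{stupid_boundary} is not what controls the boundary terms here (it is used only in the unique continuation argument of Section \ref{S.uc}; in this proof the spatial boundary contribution is exactly $\mc{N}_\sigma u\,\mc{D}_\sigma v_l-\mc{D}_\sigma u\,\mc{N}_\sigma v_l$, handled by \eqref{v_neumann} and $\mc{D}_\sigma u=0$); and your pointwise pairing defining $v(t_0)$ has the wrong sign relative to \eqref{weak_id} (it should be $+\int_\Omega u^{t_0,\psi}(0)\,v_0-\int_{(0,t_0)\times\Gamma}\mc{N}_\sigma u^{t_0,\psi}\,v_d$).
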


\begin{proof}
Define the linear functional $S: L^2 ( ( 0, T ) \times \Omega ) \rightarrow \R$ by
\[
S F := -\int_\Omega u (0) \, v_0 + \int_{ ( 0, T ) \times \Gamma } \mc{N}_\sigma u \, v_d \text{,}
\]
with $u$ being the solution to Problem (OI) with the above $F$, with $u_T \equiv 0$, and with $( X, V )$ as in \eqref{dual_XV}.
By \eqref{energy_strict} and \eqref{hidden_reg}, we have
\begin{align*}
| S F | &\lesssim \| u (0) \|_{ H^1 ( \Omega ) } \| v_0 \|_{ H^{-1} ( \Omega ) } + \| \mc{N}_\sigma u \|_{ L^2 ( ( 0, T ) \times \Gamma ) } \| v_d \|_{ L^2 ( ( 0, T ) \times \Gamma ) } \\
\notag &\lesssim ( \| v_0 \|_{ H^{-1} ( \Omega ) } + \| v_d \|_{ L^2 ( ( 0, T ) \times \Gamma ) } ) \| F \|_{ L^2 ( ( 0, T ) \times \Omega ) } \text{,}
\end{align*}
hence $S$ is bounded.
The Riesz theorem yields a unique $v \in L^2 ( ( 0, T ) \times \Omega )$ with
\[
\int_{ ( 0, T ) \times \Omega } F v = S F \text{,}
\]
hence $v$ satisfies the desired identity \eqref{weak_soln}, as well as the estimate
\begin{equation}
\label{energy_weak_pre} \| v \|_{ L^2 ( ( 0, T ) \times \Omega ) }^2 \lesssim \| v_0 \|_{ H^{-1} ( \Omega ) }^2 + \| v_d \|_{ L^2 ( ( 0, T ) \times \Gamma ) }^2 \text{.}
\end{equation}
As a result, it remains only to obtain the $C^0 ( [ 0, T ]; H^{-1} ( \Omega ) )$-regularity for $v$, the $L^\infty ( [ 0, T ]; H^{-1} ( \Omega ) )$-estimate for $v$ in \eqref{energy_weak}, and the identity \eqref{weak_id}.

First, consider the special case of regular data in Problem (C):
\begin{equation}
\label{regular_C} v_0 \in C^\infty_0 ( \Omega ) \text{,} \qquad v_d \in C^\infty_0 ( ( 0, T ) \times \Gamma ) \text{.}
\end{equation}
Extend $v_d$ into a function $G_d \in C^2 ( ( 0, T ) \times \bar{\Omega} )$ satisfying, near $( 0, T ) \times \Gamma$,
\begin{equation}
\label{G_d} G_d |_{ ( 0, T ) \times \Gamma } = v_d \text{,} \qquad \nabla d_\Gamma \cdot \nabla G_d = 0 \text{.}
\end{equation}
A direct computation then yields
\begin{align}
\label{duality_1} ( -\partial_t + B_\sigma ) ( y^\kappa G_d ) &= y^{ -1 + \kappa } ( 2 \kappa \nabla y \cdot \nabla G_d + \kappa Y \cdot \nabla y \, G_d + y W_y \, G_d ) \\
\notag &\qquad + y^\ka ( \partial_t G_d + \Delta G_d + Y \cdot \nabla G_d ) \text{,} \\
\notag :\!\!&= y^{-1} M \text{,}
\end{align}
Note in particular that $M \in L^2 ( ( 0, T ) \times \Omega )$, since $\kappa > -\frac{1}{2}$.

For any sufficiently large $l \in \N$, we let $v_{ h, l }$ be the (semigroup) solution of
\begin{align}
\label{vh} ( -\partial_t + B_\sigma ) v_{ h, l } = - y^{-1} M \chi_{ \{ y > l^{-1} \} } &\quad \text{on $( 0, T ) \times \Omega$,} \\
\notag v_{ h, l } ( 0 ) = v_0 &\quad \text{on $\Omega$,} \\
\notag v_{ h, l } = 0 &\quad \text{on $( 0, T ) \times \Gamma$,}
\end{align}
where $\chi_B$ denotes the characteristic function on $B$.
(Note semigroup solutions are analogously defined for forward heat equations, and $- y^{-1} M \chi_l \in L^2 ( ( 0, T ) \times \Omega )$.)
Then, by \eqref{duality_1} and \eqref{vh}, the function $v_l := v_{ h, l } + y^\kappa G_d$ solves
\begin{align}
\label{v_dir} ( -\partial_t + B_\sigma ) v_l = y^{-1} M \chi_{ \{ y \leq l^{-1} \} } &\quad \text{on $( 0, T ) \times \Omega$,} \\
\notag v_l ( 0 ) = v_0 &\quad \text{on $\Omega$,} \\
\notag \mc{D}_\sigma v_l = v_d &\quad \text{on $( 0, T ) \times \Gamma$.}
\end{align}
Furthermore, by Proposition \ref{neumann_trace} and \eqref{G_d}, we have
\begin{equation}
\label{v_neumann} \mc{N}_\sigma v_l = \mc{N}_\sigma v_{ l, h } \in L^2 ( ( 0, T ) \times \Gamma ) \text{.}
\end{equation}

Now, given any $u_T \in C^\infty_0 ( \Omega )$ and $F \in C^\infty_0 ( ( 0, T ) \times \Omega )$, the corresponding solution $u$ of Problem (OI), with $( X, Y )$ as in \eqref{dual_XV}, satisfies, via integrations by parts,
\begin{align}
\label{duality_10} \int_{ ( 0, T ) \times \Omega } F v_l &= \int_{ ( 0, T ) \times \Omega } ( \partial_t u + A_\sigma u ) v_l \\
\notag &= \int_{ ( 0, T ) \times \Omega } u ( -\partial_t v_l + B_\sigma v_l ) + \int_\Omega [ u_T \, v_l (T) - u (0) \, v_0 ] \\
\notag &\qquad + \int_{ ( 0, T ) \times \Gamma } ( \mc{N}_\sigma u \mc{D}_\sigma v_l - \mc{D}_\sigma u \mc{N}_\sigma v_l ) \\
\notag &= \int_\Omega [ u_T \, v_l (T) - u (0) \, v_0 ] + \int_{ ( 0, T ) \times \Gamma } \mc{N}_\sigma u \, v_d \\
\notag &\qquad + \int_{ ( 0, T ) \times \Omega } y^{-1} M u \chi_{ \{ y \leq l^{-1} \} } \text{.}
\end{align}
(Note all the integrals above exist due to the extra regularity of $v_0$ and $u_T$.
We also recalled \eqref{v_dir}--\eqref{v_neumann} and noted that one boundary term vanishes since $\mc{D}_\sigma u = 0$.)

Noting from \eqref{v_dir} that $v_l - v_m$, for any $1 \ll l < m$, satisfies
\begin{align*}
( -\partial_t + B_\sigma ) ( v_l - v_m ) &= y^{-1} M \chi_{ \{ m^{-1} < y \leq l^{-1} \} } \text{,} \\
( v_l - v_m )(0) &= 0 \text{,} \\
\mc{D}_\sigma ( v_l - v_m ) &= 0 \text{,}
\end{align*}
and applying the analogue of the proof of Proposition \ref{wp_mild} for the forward heat equation to the above, we obtain the following identity for any $t \in [ 0, T ]$:
\begin{align*}
&\tfrac{1}{2} \| ( v_l - v_m ) (t) \|_{ L^2 ( \Omega ) }^2 + \int_{ ( 0, t ) \times \Omega } | \nabla_\kappa ( v_l - v_m ) |^2 \\
&\quad = - \int_{ ( 0, t ) \times \Omega } y^{-1} ( v_l - v_m ) [ M \chi_{ \{ m^{-1} < y \leq l^{-1} \} } + ( y Y \cdot \nabla + y W_y ) ( v_l - v_m ) ] \text{.}
\end{align*}
Applying \eqref{hardy_main}--\eqref{hardy_norm_equiv} and then the Gronwall inequality to the above (see again the proof of Proposition \ref{wp_mild}) results in the bound
\[
\| v_l - v_m \|_{ L^\infty ( [ 0, T ]; L^2 ( \Omega ) ) } \lesssim \int_{ ( 0, T ) \times \{ m^{-1} < y \leq l^{-1} \} } M^2 \text{.}
\]
As $M \in L^2 ( ( 0, T ) \times \Omega )$, then $\{ v_l \}$ is a Cauchy sequence and thus converges to some $v_\ast$ in $C^0 ( [ 0, T ]; L^2 ( \Omega ) )$.
Letting $l \nearrow \infty$ in \eqref{duality_10} (and recalling \eqref{hardy_norm_equiv}), we have
\begin{equation}
\label{duality_20} \int_{ ( 0, T ) \times \Omega } F v_\ast = \int_\Omega [ u_T \, v_\ast (T) - u (0) \, v_0 ] + \int_{ ( 0, T ) \times \Gamma } \mc{N}_\sigma u \, v_d \text{.}
\end{equation}
Moreover, by an approximation, \eqref{duality_20} continues to hold even when $u_T \in H^1_0 ( \Omega )$.

Setting $u_T \equiv 0$ in \eqref{duality_20} and recalling that \eqref{weak_soln} uniquely determines $v$, we obtain $v_\ast = v$.
Setting $F \equiv 0$ in \eqref{duality_20} instead yields the identity \eqref{weak_id}.
Repeating the above for solutions $u$ of Problem (O) over smaller intervals yields
\begin{equation}
\label{duality_21} \int_\Omega [ u ( t_+ ) \, v ( t_+ ) - u ( t_- ) \, v ( t_- ) ] + \int_{ ( t_-, t_+ ) \times \Gamma } \mc{N}_\sigma u \, v_d = 0 \text{,}
\end{equation}
for any $0 \leq t_- < t_+ \leq T$.
Applying \eqref{energy_strict} and \eqref{hidden_reg} to \eqref{duality_21}, we have
\begin{align*}
\left| \int_\Omega u ( t_+ ) \, v ( t_+ ) \right| &\leq \| u (0) \|_{ H^1 ( \Omega ) } \| v_0 \|_{ H^{-1} ( \Omega ) } + \| \mc{N}_\sigma u \|_{ L^2 ( ( 0, T ) \times \Gamma ) } \| v_d \|_{ L^2 ( ( 0, T ) \times \Gamma ) } \\
&\lesssim \| u ( t_+ ) \|_{ H^1 ( \Omega ) } [ \| v_0 \|_{ H^{-1} ( \Omega ) } + \| v_d \|_{ L^2 ( ( 0, T ) \times \Gamma ) } ] \text{,}
\end{align*}
so varying $u ( t_+ )$ results in the $L^\infty ( [ 0, T ]; H^{-1} ( \Omega ) )$-estimate for $v$ in \eqref{energy_weak}:
\[
\| v \|_{ L^\infty ( [ 0, T ]; H^{-1} ( \Omega ) ) }^2 \lesssim \| v_0 \|_{ H^{-1} ( \Omega ) }^2 + \| v_d \|_{ L^2 ( ( 0, T ) \times \Gamma ) }^2\text{.}
\]
Similarly, \eqref{duality_21} also yields that $v \in C^0 ( [ 0, T ]; H^{-1} ( \Omega ) )$, hence by Definition \ref{transposition}, we have shown that $v$ is indeed a weak solution to Problem (C).

At this point, we have proved the proposition in the special case of regular data \eqref{regular_C}.
The proof is now completed via standard approximations, which show the proposition still holds for $v_0 \in H^{-1} ( \Omega )$ and $v_d \in L^2 ( ( 0, T ) \times \Gamma )$.
\end{proof}

\section{The Local Carleman Estimate} \label{S.carleman}

In this section, we prove our local Carleman estimate for solutions to Problem (O).
Throughout, we will remain with the notations and conventions introduced in Section \ref{S.wp}.
In particular, we let $y$ and $D_y$ be as in Definition \ref{bdf}.

\begin{remark}
As we will work exclusively near $\Gamma$, then in practice, $y$ coincides with $d_\Gamma$.
However, we will write $y$ to maintain consistency with Section \ref{S.wp} and \cite{new}.
\end{remark}

For our Carleman estimate, we will require special local coordinates near $\Gamma$:

\begin{definition} \label{def.w}
Given $x_0 \in \Gamma$ and sufficiently small $\varepsilon > 0$, we define
\begin{equation}
\label{eq.Carleman_regions} B^\Omega_\varepsilon ( x_0 ) := \Omega \cap B_\varepsilon ( x_0 ) \text{,} \qquad B^\Gamma_\varepsilon ( x_0 ) := \Gamma \cap B_\varepsilon ( x_0 ) \text{,}
\end{equation}
where $B_\varepsilon ( x_0 )$ is the open ball in $\R^n$ about $x_0$ of radius $\varepsilon$.
Also:
\begin{itemize}
\item We fix $C^2$-bounded coordinates $w := ( w^1, \dots, w^{n-1} )$ on $B^\Gamma_\varepsilon ( x_0 )$, with
\begin{equation}
\label{eq.w} w ( x_0 ) := 0 \text{.}
\end{equation}

\item We then extend $w$ into $C^2$-bounded coordinates $( y, w )$ on $B^\Omega_\varepsilon ( x_0 )$, such that $w$ is constant along the integral curves of the gradient $\nabla y$.

\item Furthermore, we define the following for convenience:
\begin{equation}
\label{eq.w_norm} | w |^2 := \sum_{ l = 1 }^{n-1} w_l^2 \text{.}
\end{equation}
\end{itemize}
\end{definition}

\begin{remark}
More concretely, one can take $w$ on $B^\Gamma_\varepsilon ( x_0 )$ to be normal coordinates centered at $x_0$.
However, we will not need this specificity in our analysis.
\end{remark}

\begin{remark}
Note the Euclidean metric in these $( y, w )$-coordinates is given by
\[
d y^2 + h_{kl} ( y, w ) \, d w^k d w^l \text{,}
\]
for some components $h_{kl}$.
Thus, the following relations hold on $B^\Omega_\varepsilon ( x_0 )$:
\begin{equation}
\label{eq.ortho} | \nabla y |^2 = 1 \text{,} \qquad \nabla y \cdot \nabla w^i = 0 \text{,} \quad 1 \leq i < n \text{.}
\end{equation}
Furthermore, differentiating the first part of \eqref{eq.ortho} yields
\begin{equation}
\label{eq.hessian} \nabla y \cdot \nabla^2 y \cdot \nabla y = 0 \text{.}
\end{equation}
\end{remark}

Finally, we define the Carleman weight that we will use in this section:

\begin{definition} \label{def.f}
Let $x_0 \in \Gamma$, let $p \in ( -\frac{1}{2}, 0 )$, and let $\varepsilon_0 > 0$ be sufficiently small.
We then define the Carleman weight function $f := f_p$ on $( 0, T ) \times B^\Omega_{ \varepsilon_0 } ( x_0 )$ as follows:
\begin{equation}
\label{eq.f} f := \theta (t) \, \big( \tfrac{1}{ 1 + 2p } y^{ 1 + 2 p } + | w |^2 \big) \text{,} \qquad \theta (t) := \tfrac{1}{ t (T - t) } \text{.}
\end{equation}
\end{definition}

\begin{remark}
Observe $f$ in \eqref{eq.f} extends continuously to $( 0, T ) \times B^\Gamma_{ \varepsilon_0 } ( x_0 )$.
Moreover, $f$ is everywhere non-negative and vanishes only at $( 0, T ) \times \{ x_0 \}$.
\end{remark}

\subsection{The pointwise estimate}

The most substantial component of our local Carleman estimate is captured in the following pointwise inequality:

\begin{theorem}[Pointwise Carleman estimate] \label{thm.pointC}
Let $p \in ( -\frac{1}{2}, 0 )$ satisfy
\begin{equation}
\label{eq.ass_p} \begin{cases} p \leq \kappa & \sigma < 0 \text{,} \\ |p| \ll \frac{1}{4} - \sigma & \sigma > 0 \text{.} \end{cases}
\end{equation}
Moreover, fix $x_0 \in \Gamma$, and let $f := f_p, \theta$ be as in Definition \ref{def.f}.
Then, there exist constants $C, \bar{C}, \lambda_0, \varepsilon_0 > 0$ (depending on $T, \Omega, \sigma, p$) so that for any $\lambda \geq \lambda_0$ and
\[
u \in C^0 ( ( 0, T ); H^2_{ \mathrm{loc} } ( \Omega ) ) \cap C^1 ( ( 0, T ); H^1_{ \mathrm{loc} } ( \Omega ) ) \text{,}
\]
the following inequality holds almost everywhere on $( 0, T ) \times B^\Omega_{ \varepsilon_0 } ( x_0 )$,
\begin{align}
\label{eq.pointC} e^{ -2 \lambda f } | ( \pm \partial_t + \Delta_\sigma ) u |^2 &\geq 2 ( \partial_t J^t + \nabla \cdot J ) + C \lambda \theta e^{ -2 \lambda f } y^{ 2 p } \, | \nabla u |^2 \\
\notag &\qquad + C e^{ -2 \lambda f } ( \lambda^3 \theta^3 y^{ -1 + 6 p } + \lambda \theta y^{ -2 + 2 p } ) \, u^2 \text{,}
\end{align}
where $J^t$ is a scalar function satisfying
\begin{equation}
\label{eq.Jt} | J^t | \leq \bar{C} e^{ -2 \lambda f } \, | \nabla u |^2 + \bar{C} e^{ -2 \lambda f } \, \lambda^2 \theta^2 y^{-2} \, u^2 \text{,}
\end{equation}
and where $J$ is a vector field satisfying
\begin{align}
\label{eq.J} \nabla y \cdot J &\leq \partial_t ( e^{ -\lambda f } u ) D_y ( e^{ -\lambda f } u ) + \bar{C} e^{ -2 \lambda f } \lambda \theta y^{ 2 p } \, ( D_y u )^2 \\
\notag &\qquad + \bar{C} e^{ -2 \lambda f } \lambda^3 \theta^3 y^{ -2 + 2 p } \, u^2 \text{.}
\end{align}
\end{theorem}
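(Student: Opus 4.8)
The plan is to follow the standard Carleman-estimate machinery—conjugate the operator by $e^{-\lambda f}$, split the conjugated operator into self-adjoint and skew-adjoint parts, and extract positivity from their commutator—but with the special feature that the weight $f$ in \eqref{eq.f} is adapted both to the singular potential $\sigma d_\Gamma^{-2}$ and to the local geometry of $\Gamma$ near $x_0$. Concretely, I would set $\phi := e^{-\lambda f} u$ and compute $e^{-\lambda f}(\pm\partial_t + \Delta_\sigma)u$ in terms of $\phi$; using the factorization $\Delta_\sigma = \nabla_{-\kappa}\cdot\nabla_\kappa + (\text{l.o.t.})$ from \eqref{mod_op}–\eqref{VW_y} and the twisted derivative $D_y$, one obtains a conjugated operator of the form $\mathcal{L}\phi = \mathcal{S}\phi + \mathcal{A}\phi + (\text{l.o.t.})$, where $\mathcal{S}$ collects the self-adjoint second-order pieces (the twisted Laplacian plus a zero-order term involving $|\nabla f|^2$, and here also the crucial modifier $z>0$ multiplying a zero-order term) and $\mathcal{A}$ collects the skew-adjoint first-order pieces (the time derivative and the transport term $\nabla f\cdot\nabla$, i.e.\ $\lambda D_y f\, D_y + \dots$). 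Expanding $|\mathcal{L}\phi|^2 \geq 2\,\mathcal{S}\phi\cdot\mathcal{A}\phi + (\text{cross terms with l.o.t.})$ and rewriting $2\,\mathcal{S}\phi\cdot\mathcal{A}\phi$ as a divergence $2(\partial_t J^t + \nabla\cdot J)$ plus a bulk term is the heart of the computation; the bulk term is where all the positivity must come from.

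The key algebraic points I would track carefully are the following. First, because $\nabla w^i \cdot \nabla y = 0$ by \eqref{eq.ortho}, the $|w|^2$ part of the exponent interacts with $D_y$ and with the singular potential only through benign terms—this is precisely the mechanism that lets us localize to $x_0$ without producing uncontrollable cross terms between $w$ and $\sigma d_\Gamma^{-2}$. Second, the dominant bulk contributions come from the commutator of the radial transport $\lambda D_y f\, D_y$ with the twisted second-order operator; since $D_y f \sim \theta\, y^{2p}$ (modulo the $w$-contribution, which is lower order in the relevant region), differentiating produces the signature terms $\lambda\theta y^{2p}|\nabla u|^2$, $\lambda^3\theta^3 y^{-1+6p}u^2$, and—from the interaction with the inverse-square potential, using the pointwise Hardy-type inequality of Proposition \ref{prop_hardy_gen} with the appropriate choice of $q$ tied to $\kappa$—the term $\lambda\theta y^{-2+2p}u^2$. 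The hypothesis \eqref{eq.ass_p} on $p$ is exactly what is needed to make the coefficient of the Hardy term positive: when $\sigma<0$ one needs $p\leq\kappa$ so that the relevant quadratic expression in $p$ and $\kappa$ has the right sign, and when $\sigma>0$ one takes $|p|$ tiny relative to $\tfrac14-\sigma$ to stay in the admissible range. The parameter $z$ (the zero-order modifier in the multiplier) is chosen large enough, depending on $\|\nabla^2 y\|$ and $\|\nabla^2 w\|$ over $B^\Omega_{\varepsilon_0}(x_0)$, to absorb any sign-indefinite curvature terms—this is possible here, unlike in \cite{new}, precisely because the estimate is purely local and nothing from the interior of $\Omega$ needs to be absorbed.

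I would then collect the error terms: cross terms between the principal part and the lower-order coefficients $X\cdot\nabla u$, $V u$ (using $dW\in L^\infty$ and $X\in C^1$ from Definition \ref{def.lower_order}), and terms arising because $y\in C^2$ only. For the latter, the essential observation—already flagged in the introduction—is that the most singular terms requiring integration by parts that would differentiate $d_\Gamma$ in fact contain no derivatives of $d_\Gamma$ to begin with, so two derivatives of $y$ always suffice; the identities \eqref{eq.ortho}–\eqref{eq.hessian} ($|\nabla y|^2=1$, $\nabla y\cdot\nabla^2 y\cdot\nabla y = 0$) eliminate several would-be problematic terms outright. After choosing $\lambda_0$ large and $\varepsilon_0$ small (so that, in $B^\Omega_{\varepsilon_0}(x_0)$, the $w$-contribution to $\nabla f$ is dominated by the $y$-contribution and the $C^2$-error terms are small relative to the positive bulk), all error terms are absorbed into the three positive bulk terms on the right of \eqref{eq.pointC}, with room to spare giving the stated constant $C$. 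The bounds \eqref{eq.Jt} on $J^t$ and \eqref{eq.J} on $\nabla y\cdot J$ are then read off directly from the explicit formulas for $J^t$ and $J$ produced in the divergence rearrangement—$J^t$ is essentially $e^{-2\lambda f}(\dots)$ with the $|\nabla u|^2$ and $\lambda^2\theta^2 y^{-2}u^2$ terms visible, and the normal component of $J$ retains the boundary-pairing term $\partial_t(e^{-\lambda f}u)\,D_y(e^{-\lambda f}u)$ plus controlled remainders.

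The main obstacle I anticipate is the sign analysis of the Hardy-type bulk term, i.e.\ showing that the coefficient of $\lambda\theta y^{-2+2p}u^2$ coming out of the commutator computation is genuinely positive for all $p$ satisfying \eqref{eq.ass_p}. This requires combining Proposition \ref{prop_hardy_gen} with the precise zero-order terms generated by conjugation—the $|\nabla f|^2$ term contributes $\sim\lambda^2\theta^2 y^{4p}$, which is \emph{less} singular than $y^{-2}$ when $p>-\frac12$, so the $y^{-2}$-positivity must instead be harvested from the interplay between $\sigma d_\Gamma^{-2}$ and the twisting exponent $\kappa$, exactly as in \cite{new}. Getting the constants to line up—particularly verifying that $p\leq\kappa$ (resp.\ $|p|\ll\frac14-\sigma$) is both necessary and sufficient—is the delicate bookkeeping step, and is where the restriction in \eqref{eq.ass_p} is forced. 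Everything else is a (lengthy but routine) integration-by-parts computation.
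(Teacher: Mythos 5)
Your proposal is correct in outline and follows essentially the same route as the paper: conjugate by $e^{-\lambda f}$, run a multiplier/commutator argument with the $z$-modified zero-order term, exploit $\nabla y\cdot\nabla w^i=0$ to keep the $|w|^2$-part of the weight from interacting with the singular potential, harvest the $y^{-2+2p}$ and $y^{-3+2p}$ positivity via Proposition \ref{prop_hardy_gen} (the paper applies it with $q=p$ and $q=-\tfrac12+p$, i.e.\ tied to $p$ rather than $\kappa$), and use \eqref{eq.ass_p} precisely where you say it is needed. The only cosmetic difference is that the paper does not perform a strict self-adjoint/skew-adjoint split but multiplies the conjugated equation by the transport-plus-zero-order multiplier $Sv$ and discards $\tfrac12|Sv|^2\geq 0$ after Cauchy--Schwarz; this is the same method in slightly different bookkeeping.
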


\begin{proof}
To simply the upcoming presentation, we will use $C', C''$ to denote positive constants---depending $T$, $\Omega$, $\sigma$, $p$---whose values can change between lines.
Moreover, we will only treat the backward heat operator $\partial_t + \Delta_\sigma$, as the proof for the forward heat operator $-\partial_t + \Delta_\sigma$ is entirely analogous.

We begin by defining the quantities
\begin{equation}
\label{eql.pointC_0} P_\sigma := \partial_t + \Delta_\sigma \text{,} \qquad v := e^{ -\lambda f } u \text{,} \qquad z > 0 \text{,}
\end{equation}
where the precise value of $z$ is to be determined.
However, by taking $z$ to be large enough and $\varepsilon_0$ to be a small enough, we then have on $B^\Omega_{ \varepsilon_0 } ( x_0 )$:
\begin{equation}
\label{eql.pointC_1} y = d_\Gamma \ll 1 \text{,} \qquad | \nabla^2 y | \ll z \text{,} \qquad | \nabla^2 ( | w |^2 ) | \ll z \text{.}
\end{equation}
In addition, we note the following identities (note here we used \eqref{eq.ortho}):
\begin{align}
\label{eql.pointC_2} \nabla f &= \theta \, [ y^{ 2 p } \nabla y + \nabla ( |w|^2 ) ] \text{,} \\
\notag D_y f &= \theta \, y^{ 2 p } \text{,} \\
\notag \nabla^2 f &= \theta \, [ 2 p y^{ -1 + 2 p } \, ( \nabla y \otimes \nabla y ) + y^{ 2 p } \nabla^2 y + \nabla^2 ( | w |^2 ) ] \text{,} \\
\notag \Delta f &= \theta \, [ 2 p y^{ -1 + 2 p } + y^{ 2 p } \Delta y + \Delta ( | w |^2 ) ] \text{.}
\end{align}

First, using \eqref{eql.pointC_0} and \eqref{eql.pointC_2}, we expand $P_\sigma u$ as
\begin{align}
\label{eql.pointC_3} e^{ -\lambda f } P_\sigma u &= Sv + \Delta v + \mc{A}_0 \, v + \mc{E}_0 \, v \text{,} \\
\notag Sv &= \partial_t v + 2 \lambda \, \nabla f \cdot \nabla v + 2 \lambda \theta ( p y^{ -1 + 2 p } - z y^{ 2 p } ) \, v \text{,} \\
\notag \mc{A}_0 &= \lambda \partial_t f + \lambda^2 | \nabla f |^2 + \sigma y^{-2} \text{,} \\
\notag \mc{E}_0 &= \lambda \theta [ ( 2 z + \Delta y ) y^{ 2 p } + \Delta ( | w |^2 ) ] \text{.}
\end{align}
Multiplying the first part of \eqref{eql.pointC_3} by $Sv$, and noting that
\begin{align*}
e^{ -\lambda f } P_\sigma u \, Sv &\leq \tfrac{1}{2} e^{ -2 \lambda f } | P_\sigma u |^2 + \tfrac{1}{2} | Sv |^2 \text{,} \\
\mc{E}_0 \, v Sv &\leq \tfrac{1}{2} | Sv |^2 + C' \lambda^2 \theta^2 y^{ 4 p } \, v^2 \text{,}
\end{align*}
where we also recalled \eqref{eql.pointC_1}, we then obtain the bound
\begin{align}
\label{eql.pointC_4} \tfrac{1}{2} e^{ -2 \lambda f } | P_\sigma u |^2 &\geq \tfrac{1}{2} | S v |^2 + \Delta v S v + \mc{A}_0 \, v Sv + \mc{E}_0 \, v Sv \\
\notag &\geq \Delta v S v + \mc{A}_0 \, v Sv - C' \lambda^2 \theta^2 y^{ 4 p } \, v^2 \text{.}
\end{align}

By some direct computations, along with \eqref{eql.pointC_2}, the first two terms on the right-hand side of \eqref{eql.pointC_4} can be further expanded as
\begin{align*}
\Delta v S v &= \Delta v \partial_t v + 2 \lambda \Delta v ( \nabla f \cdot \nabla v ) + 2 \lambda \theta ( p y^{ -1 + 2 p } - z y^{ 2 p } ) \, v \Delta v \\
&= - \partial_t \big( \tfrac{1}{2} | \nabla^2 v | \big) + \nabla \cdot ( \nabla v \partial_t v ) + \nabla \cdot [ 2 \lambda \, \nabla v ( \nabla f \cdot \nabla v ) - \lambda \, \nabla f \, | \nabla v |^2 ] \\
&\qquad + \nabla \cdot \big[ 2 \lambda \theta ( p y^{ -1 + 2 p } - z y^{ 2 p } ) \, v \nabla v - \lambda \theta \nabla ( p y^{ -1 + 2 p } - z y^{ 2 p } ) \, v^2 \big] \\
&\qquad - 2 \lambda \, ( \nabla v \cdot \nabla^2 f \cdot \nabla v ) + \lambda \theta \, [ ( 2 z + \Delta y ) y^{ 2 p } + \Delta ( | w |^2 ) ] \, | \nabla v |^2 \\
&\qquad + \lambda \theta \Delta ( p y^{ -1 + 2 p } - z y^{ 2 p } ) \, v^2 \text{,} \\
\mc{A}_0 \, v S v &= \tfrac{1}{2} \mc{A}_0 \, \partial_t ( v^2 ) + \lambda \mc{A}_0 \, \nabla f \cdot \nabla ( v^2 ) + 2 \lambda \theta ( p y^{ -1 + 2 p } - z y^{ 2 p } ) \mc{A}_0 \, v^2 \\
\notag &= \partial_t \big( \tfrac{1}{2} \mc{A}_0 \, v^2 \big) + \nabla \cdot ( \lambda \mc{A}_0 \nabla f \, v^2 ) - \tfrac{1}{2} \partial_t \mc{A}_0 \, v^2 - \lambda \nabla f \cdot \nabla \mc{A}_0 \, v^2 \\
\notag &\qquad - \lambda \theta [ ( 2 z + \Delta y ) y^{ 2 p } + \Delta ( | w |^2 ) ] \mc{A}_0 \, v^2 \text{.}
\end{align*}
Combining \eqref{eql.pointC_1}, \eqref{eql.pointC_4}, and the above, we then obtain
\begin{align}
\label{eql.pointC_10} \tfrac{1}{2} e^{-2 \lambda f} | P_\sigma u |^2 &\geq \partial_t J_t + \nabla \cdot J^0 - 2 \lambda \, ( \nabla v \cdot \nabla^2 f \cdot \nabla v ) \\
\notag &\qquad + 2 ( 1 - \delta ) z \lambda \theta y^{ 2 p } \, | \nabla v |^2 + \mc{A} \, v^2 - C' \lambda^2 \theta^2 y^{ 4 p } \, v^2 \text{,} \\
\notag \mc{A} &= - \tfrac{1}{2} \partial_t \mc{A}_0 - \lambda \nabla f \cdot \nabla \mc{A}_0 + \lambda \theta \Delta ( p y^{ -1 + 2 p } - z y^{ 2 p } ) \\
\notag &\qquad - \lambda \theta [ ( 2 z + \Delta y ) y^{ 2 p } + \Delta ( | w |^2 ) ] \mc{A}_0 \text{,} \\
\notag J^t &= - \tfrac{1}{2} \, | \nabla v |^2 + \tfrac{1}{2} \mc{A}_0 \, v^2 \text{,} \\
\notag J_0 &= \nabla v \partial_t v + 2 \lambda \, \nabla v ( \nabla f \cdot \nabla v ) - \lambda \, \nabla f \, | \nabla v |^2 \\
\notag &\qquad + 2 \lambda \theta ( p y^{ -1 + 2 p } - z y^{ 2 p } ) \, v \nabla v + \lambda \mc{A}_0 \nabla f \, v^2 \\
\notag &\qquad - \lambda \theta \nabla ( p y^{ -1 + 2 p } - z y^{ 2 p } ) \, v^2 \text{,}
\end{align}
where the precise value of $\delta > 0$ will be determined later, but $\delta$ can be chosen to be arbitrarily small by making $z$ sufficiently large.

We now expand the first-order terms in the inequality of \eqref{eql.pointC_10}.
Applying the identities \eqref{eql.pointC_2} and then recalling \eqref{eql.pointC_1}, we conclude that
\begin{align}
\label{eql.pointC_12} &2 ( 1 - \delta ) z \lambda \theta y^{ 2 p } \, | \nabla v |^2 - 2 \lambda \, ( \nabla v \cdot \nabla^2 f \cdot \nabla v ) \\
\notag &\quad \geq - 4 p \lambda \theta y^{ -1 + 2 p } \, ( D_y v )^2 + \lambda \theta y^{ 2 p } \, \big[ \nabla v \cdot \big( \delta z I - 2 \nabla^2 y \big) \cdot \nabla v \big] \\
\notag &\quad\qquad - 2 \lambda \theta \, \big[ \nabla v \cdot \nabla^2 ( |w|^2 ) \cdot \nabla v \big] + 2 ( 1 - \delta ) z \lambda \theta y^{ 2 p } \, ( D_y v )^2 \\
\notag &\quad \geq C'' \lambda \theta y^{ 2 p } \, | \nabla v |^2 - 4 p \lambda \theta y^{ -1 + 2 p } \, ( D_y v )^2 + z \lambda \theta y^{ 2 p } \, ( D_y v )^2 \text{.}
\end{align}
We apply Proposition \ref{hardy_gen} to the last two terms on the right-hand side of \eqref{eql.pointC_12}, with $q := -\frac{1}{2} + p$ and $q := p$.
Recalling also \eqref{eq.ortho} and \eqref{eq.hessian}, we then obtain
\begin{align}
\label{eql.pointC_13} &- 4 p \lambda \theta y^{ -1 + 2 p } \, ( D_y v )^2 + 2 ( 1 - \delta ) z \lambda \theta y^{ 2 p } \, ( D_y v )^2 \\
\notag &\quad \geq \nabla \cdot [ - 4 p ( 1 - p ) \lambda \theta y^{ -2 + 2 p } \nabla y \, v^2 + \tfrac{1}{2} ( 1 - \delta ) ( 1 - 2 p ) z \lambda \theta y^{ -1 + 2 p } \nabla y \, v^2 ] \\
\notag &\quad\qquad - 4 p ( 1 - p )^2 \lambda \theta y^{ -3 + 2 p } \, v^2 + \tfrac{1}{2} ( 1 - \delta ) ( 1 - 2 p )^2 z \lambda \theta y^{ -2 + 2 p } \, v^2 \\
\notag &\quad\qquad + 4 p ( 1 - p ) \lambda \theta y^{ -2 + 2 p } \Delta y \, v^2 - C' \lambda \theta y^{ -1 + 2 p } \, v^2 \text{.}
\end{align}
(Notice we used that $p < 0$ and $z > 0$.)
Combining \eqref{eql.pointC_1} and \eqref{eql.pointC_10}--\eqref{eql.pointC_13} yields
\begin{align}
\label{eql.pointC_20} \tfrac{1}{2} e^{ -2 \lambda f } | P_\sigma u |^2 &\geq ( \partial_t J^t + \nabla \cdot J ) + C'' \lambda \theta y^{ 2 p } \, | \nabla v |^2 + \mc{A} \, v^2 \\
\notag &\qquad - 4 p ( 1 - p )^2 \lambda \theta y^{ -3 + 2 p } \, v^2 + 4 p ( 1 - p ) \lambda \theta y^{ -2 + 2 p } \Delta y \, v^2 \\
\notag &\qquad + \tfrac{1}{2} ( 1 - \delta ) ( 1 - 2 p )^2 z \lambda \theta y^{ -2 + 2 p } \, v^2 - C' \lambda^2 \theta^2 y^{ -1 + 2 p } \, v^2 \text{,} \\
\notag J &= \nabla v \partial_t v + 2 \lambda \, \nabla v ( \nabla f \cdot \nabla v ) - \lambda \, \nabla f \, | \nabla v |^2 + \lambda \mc{A}_0 \nabla f \, v^2 \\
\notag &\qquad + 2 \lambda \theta ( p y^{ -1 + 2 p } - z y^{ 2 p } ) \, v \nabla v - \lambda \theta \nabla ( p y^{ -1 + 2 p } - z y^{ 2 p } ) \, v^2 \\
\notag &\qquad - 4 p ( 1 - p ) \lambda \theta y^{ -2 + 2 p } \nabla y \, v^2 \\
\notag &\qquad + \tfrac{1}{2} ( 1 - \delta ) ( 1 - 2 p ) z \lambda \theta y^{ -1 + 2 p } \nabla y \, v^2 \text{,}
\end{align}
provided $z$ is chosen to be sufficiently large.

We now turn our attention to $\mc{A}$.
First, by \eqref{eq.ortho}, \eqref{eql.pointC_2}, and \eqref{eql.pointC_3},
\begin{align}
\label{eql.pointC_21} \mc{A}_0 &= \mc{A}_0^\star + \lambda^2 \theta^2 \, | \nabla ( | w |^2 ) |^2 + \lambda \theta' \, | w |^2 \text{,} \\
\notag \mc{A}_0^\star &= \sigma y^{-2} + \lambda^2 \theta^2 y^{ 4 p } + \tfrac{1}{ 1 + 2 p } \lambda \theta' y^{ 1 + 2 p } \text{.}
\end{align}
(In particular, $\mc{A}_0^\star$ represents the part of $\mc{A}_0$ that is arising from $y$.)
Using that $w$ denotes bounded coordinates, we then conclude
\begin{equation}
\label{eql.pointC_22} \mc{A}_0 \geq \sigma y^{-2} + \lambda^2 \theta^2 y^{ 4 p } - C' \lambda^2 \theta^2 \text{.}
\end{equation}
Note that in the last step, we used the inequality (which follows from \eqref{eq.f})
\begin{equation}
\label{eql.pointC_23} | \theta^{(k)} | \lesssim_k \theta^{k+1} \text{.}
\end{equation}
Moreover, \eqref{eql.pointC_21} and \eqref{eql.pointC_23} also yield, for the first term of $\mc{A}$,
\begin{equation}
\label{eql.pointC_24} - \tfrac{1}{2} \partial_t \mc{A}_0 \geq - C' \lambda^2 \theta^3 y^{ 4 p } \text{.}
\end{equation}

Next, we apply direct computations using \eqref{eql.pointC_2} and \eqref{eql.pointC_23} to obtain
\begin{align}
\label{eql.pointC_26} - \lambda \nabla f \cdot \nabla \mc{A}_0 &= - \lambda \theta y^{ 2 p } \nabla y \cdot \nabla \mc{A}_0^\star - \lambda \theta \, \nabla ( | w |^2 ) \cdot \nabla \mc{A}_0^\star \\
\notag &\qquad - \lambda \nabla f \cdot \nabla \big[ \lambda^2 \theta^2 \, | \nabla ( | w |^2 ) |^2 + \lambda \theta' \, | w |^2 \big] \\
\notag &\geq 2 \sigma \lambda \theta y^{ -3 + 2 p } - 4 p \lambda^3 \theta^3 y^{ -1 + 6 p } - C' \lambda^3 \theta^3 \, y^{ 4 p } \text{.}
\end{align}
In particular, the two main terms on the right-hand side of \eqref{eql.pointC_26} come from the term $- \lambda \theta y^{ 2 p } \nabla y \cdot \nabla \mc{A}_0^\star$, while the term $- \lambda \theta \, \nabla ( | w |^2 ) \cdot \nabla \mc{A}_0^\star$ vanishes due to \eqref{eql.pointC_1}.
(Crucially, without \eqref{eql.pointC_1}, this latter term leads to quantities that are $\smash{ O ( y^{-3} ) }$ and too singular to treat.)
Furthermore, as $w$ is bounded, all the terms arising from $w$ are strictly less singular and hence can be treated as error terms.

Similar computations can be done for the remaining terms of $\mc{A}$ in \eqref{eql.pointC_10}:
\begin{align}
\label{eql.pointC_27} &\lambda \theta \Delta ( p y^{ -1 + 2 p } - z y^{ 2 p } ) - \lambda \theta [ ( 2 z + \Delta y ) y^{ 2 p } + \Delta ( | w |^2 ) ] \mc{A}_0 \\
\notag &\quad \geq 2 p ( 1 - 2 p ) ( 1 - p ) \lambda \theta y^{ -3 + 2 p } + p ( 1 - 2 p ) \lambda \theta ( 2 z - \Delta y ) \, y^{ -2 + 2 p } \\
\notag &\quad \qquad - \sigma \lambda \theta [ ( 2 z + \Delta y ) \, y^{ -2 + 2 p } + \Delta ( | w |^2 ) \, y^{-2} ] \\
\notag &\quad \qquad - C' \lambda \theta \, y^{ -1 + 2 p } - C' \lambda^3 \theta^3 y^{ 6 p } \text{.}
\end{align}
Combining \eqref{eql.pointC_10} and \eqref{eql.pointC_24}--\eqref{eql.pointC_27}, we then obtain
\begin{align}
\label{eql.pointC_28} \mc{A} &\geq [ 2 \sigma + 2 p ( 1 - 2 p ) ( 1 - p ) ] \lambda \theta y^{ -3 + 2 p } + [ - 2 \sigma + 2 p ( 1 - 2 p ) ] z \lambda \theta y^{ -2 + 2 p } \\
\notag &\qquad + [ - \sigma - p ( 1 - 2 p ) ] \Delta y \, \lambda \theta y^{ -2 + 2 p } - \sigma \Delta ( | w |^2 ) \, \lambda \theta y^{-2} \\
\notag &\qquad - 4 p \lambda^3 \theta^3 y^{ -1 + 6 p } - C' \lambda \theta y^{ -1 + 2 p } - C' \lambda^3 \theta^3 y^{ 6 p } \text{.}
\end{align}
From \eqref{eql.pointC_20} and \eqref{eql.pointC_28}, we then have
\begin{align}
\label{eql.pointC_30} \tfrac{1}{2} e^{-2 \lambda f} | P_\sigma u |^2 &\geq ( \partial_t J_t + \nabla \cdot J ) + C'' \lambda \theta y^{ 2 p } \, | \nabla v |^2 - 4 p \lambda^3 \theta^3 y^{ -1 + 6 p } \, v^2 \\
\notag &\qquad - C' \lambda^3 \theta^3 y^{ 6 p } \, v^2 + 2 [ \sigma - p ( 1 - p ) ] \lambda \theta y^{ -3 + 2 p } \, v^2 \\
\notag &\qquad + \big[ {- 2 \sigma} + \tfrac{1}{2} ( 1 - \delta ) + 2 \delta p - 2 ( 1 + \delta ) p^2 \big] z \lambda \theta y^{ -2 + 2 p } \, v^2 \\
\notag &\qquad + ( -\sigma + 3 p - 6 p^2 ) \lambda \theta \Delta d_\Gamma \, d_\Gamma^{ -2 + 2 \kappa } \, v^2 \\
\notag &\qquad - \sigma \Delta ( | w |^2 ) \, \lambda \theta y^{-2} \, v^2 - C' \lambda^3 \theta^3 y^{ -1 + 2 p } \, v^2 \text{.}
\end{align}

To treat the above, we first note that \eqref{eq.ass_p} implies
\begin{equation}
\label{eql.pointC_31} \sigma \geq p ( 1 - p ) \text{,} \qquad 2 [ \sigma - p ( 1 - p ) ] \lambda \theta y^{ -3 + 2 p } \, v^2 \geq 0 \text{.}
\end{equation}
Next, we claim that the following inequality holds:
\begin{equation}
\label{eql.pointC_32} {- 2 \sigma} + \tfrac{1}{2} ( 1 - \delta ) + 2 \delta p - 2 ( 1 + \delta ) p^2 > 0
\end{equation}
The proof of \eqref{eql.pointC_32} splits into two cases.
First, if $\sigma < 0$, then a direct computation combined with \eqref{eq.ass_p} shows that taking any $\delta \leq \frac{1}{2}$ and $p \leq \kappa$ results in \eqref{eql.pointC_32}.
On the other hand, when $\sigma > 0$, then the assumption $\sigma < \frac{1}{4}$ implies that we can take $\delta$ small enough (and hence, $z$ sufficiently large) and $p$ close enough to $0$ (recall \eqref{eq.ass_p}) such that \eqref{eql.pointC_32} again holds, completing the proof of \eqref{eql.pointC_32}.

In particular, \eqref{eql.pointC_32} implies we can choose $z$ large enough so that
\begin{align}
\label{eql.pointC_33} C'' y^{ -2 + 2 p } &\leq \big[ {- 2 \sigma} + \tfrac{1}{2} ( 1 - \delta ) + 2 \delta p - 2 ( 1 + \delta ) p^2 \big] z y^{ -2 + 2 p } \\
\notag &\qquad + ( -\sigma + 3 p - 6 p^2 ) \Delta y \, y^{ -2 + 2 p } - \sigma \Delta ( | w |^2 ) \, y^{-2} \text{.}
\end{align}
Moreover, as long as $\varepsilon_0$ is small enough, then \eqref{eql.pointC_1} also implies
\begin{equation}
\label{eql.pointC_34} -4 p y^{ -1 + 6 p } - C' y^{ 6 p } - C' y^{ -1 + 2 p } \geq C'' y^{ -1 + 6 p } \text{.}
\end{equation}
Combining \eqref{eql.pointC_30}, \eqref{eql.pointC_31}, \eqref{eql.pointC_33}, and \eqref{eql.pointC_34} then results in the following inequality:
\begin{align}
\label{eql.pointC_40} \tfrac{1}{2} e^{ -2 \lambda f } | P_\sigma u |^2 &\geq ( \partial_t J_t + \nabla \cdot J ) + C'' \lambda \theta y^{ 2 p } \, | \nabla v |^2 \\
\notag &\qquad + C'' \lambda \theta y^{ -2 + 2 p } \, v^2 + C'' \lambda^3 \theta^3 y^{ -1 + 6 p } \, v^2 \text{.}
\end{align}
Rewriting \eqref{eql.pointC_40} in terms of $u$ using \eqref{eql.pointC_0}, and noting the bound
\[
e^{ -2 \lambda f } y^{ 2 p } \, | \nabla u |^2 \leq C' y^{ 2 p } \, | \nabla v |^2 + C' \lambda^2 \theta^2 y^{ 6 p } v^2 \text{,}
\]
which is a consequence of \eqref{eql.pointC_2} and the boundedness of the coordinates $w$, we then obtain the desired inequality \eqref{eq.pointC} once $\lambda$ is made sufficiently large.

It remains to show the inequalities \eqref{eq.Jt} and \eqref{eq.J}.
First, for \eqref{eq.Jt}, we expand the formula for $J_t$ in \eqref{eql.pointC_10} in order to estimate
\begin{align*}
| J^t | &\leq \tfrac{1}{2} | \nabla v |^2 + \tfrac{1}{2} \mc{A}_0 \, v^2 \\
&\leq C' e^{ -2 \lambda f } \, | \nabla u |^2 + C' e^{ -2 \lambda f } \, \lambda^2 \theta^2 y^{-2} \, u^2 \text{,}
\end{align*}
where we recalled \eqref{eql.pointC_0}, \eqref{eql.pointC_2}, \eqref{eql.pointC_21}, and \eqref{eql.pointC_23}.
For \eqref{eq.J}, we expand \eqref{eql.pointC_20}:
\begin{align*}
\nabla y \cdot J &= \partial_t v D_y v + 2 \lambda D_y v \, ( \nabla f \cdot \nabla v ) - \lambda D_y f \, | \nabla v |^2 + \lambda \mc{A}_0 D_y f \, v^2 \\
&\qquad + 2 \lambda \theta ( \kappa y^{ -1 + 2 p } - z y^{ 2 p } ) \, v D_y v - \lambda \theta \, \nabla y \cdot \nabla ( p y^{ -1 + 2 p } - z y^{ 2 p} ) \, v^2 \\
&\qquad - 4 p ( 1 - p ) \lambda \theta y^{ -2 + 2 p } \, v^2 + \tfrac{1}{2} ( 1 - \delta ) ( 1 - 2 p ) z \lambda \theta y^{ -1 + 2 p } \, v^2 \text{.}
\end{align*}
The right-hand side of the above can be expanded using \eqref{eql.pointC_2} and bounded:
\begin{align}
\label{eql.pointC_43} \nabla y \cdot J &\leq \partial_t v D_y v + 2 \lambda \theta y^{ 2 \kappa } \, ( D_y v )^2 + 2 \lambda D_y v \, \nabla ( | w |^2 ) \cdot \nabla v - \lambda \theta y^{ 2 p } \, | \nabla v |^2 \\
\notag &\qquad + C' \lambda \theta y^{ -1 + 2 p } \, |v| | D_y v | + C' \lambda \theta y^{ -2 + 2 p } \, v^2 \\
\notag &\leq | \partial_t v D_y v | + C' \lambda \theta y^{ 2 p } \, ( D_y v )^2 + C' \lambda^3 \theta^3 y^{ -2 + 2 p } \, v^2 \text{.}
\end{align}
(In particular, the term in the right-hand side of \eqref{eql.pointC_43} containing $\nabla ( |w|^2 ) \cdot \nabla v$ was absorbed into the negative $| \nabla v |^2$-term.)
Thus, recalling also \eqref{eql.pointC_0}, we have
\begin{align*}
\notag \nabla y \cdot J &\leq \partial_t ( e^{ -\lambda f } u ) D_y ( e^{ -\lambda f } u ) + C' e^{ -2 \lambda f } \, \lambda \theta y^{ 2 p } \, ( D_y u )^2 \\
&\qquad + C' e^{ -2 \lambda f } \, \lambda^3 \theta^3 y^{ -2 + 2 p } \, u^2 \text{,}
\end{align*}
which is precisely the inequality \eqref{eq.J}.
\end{proof}

\begin{remark}
Note that the ability to handle non-convex $\Gamma$ in Lemma \ref{thm.pointC}, in contrast to \cite{new}, arises from the fact that $z$ in \eqref{eql.pointC_0} can be chosen to be arbitrarily large.
In \cite[Lemma 2.7]{new}, the admissible range of $z$ is also constrained from above.
\end{remark}

\begin{remark}
The key to reducing the regularity of $\Gamma$ to $C^2$, in contrast to \cite{new}, is that we modified the zero-order part of $S v$ in \eqref{eql.pointC_3} to contain only the most singular terms, which do not contain any derivatives of $y$.
The remaining zero-order contributions, which do involve derivatives of $y$, were left in the ``error" coefficient $\mc{E}_0$, which could be absorbed into leading terms without taking more derivatives.
In effect, this allows us to only take two derivatives of $y$ in the proof of Lemma \ref{thm.pointC}.
\end{remark}

\begin{remark}
Note that the proof of Lemma \ref{thm.pointC} breaks down in the limit $\sigma \nearrow \frac{1}{4}$.
In particular, the crucial inequality \eqref{eql.pointC_32} requires that $\sigma < \frac{1}{4}$.
\end{remark}

\subsection{The integrated estimate}

Under sufficient regularity, we can then integrate our pointwise estimate \eqref{thm.pointC} to obtain our integrated local Carleman estimate:

\begin{theorem}[Local Carleman estimate] \label{T.Carleman}
Let $p \in ( -\frac{1}{2}, 0 )$ satisfy \eqref{eq.ass_p}, fix a point $x_0 \in \Gamma$, and let $f := f_p, \theta$ be as in Definition \ref{def.f}.
Then, there exist $C, \bar{C}, \lambda_0, \varepsilon_0 > 0$ (depending on $T$, $\Omega$, $\sigma$, $p$) such that the following Carleman estimate holds,
\begin{align}
\label{eq.Carleman} &C \int_{ (0, T) \times [ B^\Omega_\varepsilon ( x_0 ) \cap \{ y > \delta \} ] } e^{ -2 \lambda f } \big[ \lambda \theta y^{ 2 p } | \nabla u |^2 + ( \lambda^3 \theta^3 y^{ -1 + 6 p } + \lambda \theta y^{ -2 + 2 p } ) \, u^2 \big] \\
\notag &\quad \leq \bar{C} \int_{ ( 0, T ) \times [ B^\Omega_\varepsilon ( x_0 ) \cap \{ y = \delta \} ] } e^{ -2 \lambda f } [ \lambda \theta y^{ 2 p } \, ( D_y u )^2 + \lambda^3 \theta^3 y^{ -2 + 2 p } \, u^2 ] \\
\notag &\quad\qquad + \int_{ ( 0, T ) \times [ B^\Omega_\varepsilon ( x_0 ) \cap \{ y = \delta \} ] } \partial_t ( e^{ -\lambda f } u ) D_y ( e^{ -\lambda f } u ) \\
\notag &\quad\qquad + \int_{ ( 0, T ) \times [ B^\Omega_\varepsilon ( x_0 ) \cap \{ y > \delta \} ] } e^{ -2 \lambda f } | ( \pm \partial_t + \Delta_\sigma ) u |^2 \text{,}
\end{align}
for all $\lambda \geq \lambda_0$ and $0 < \delta \ll \varepsilon \leq \varepsilon_0$, and for all functions
\begin{equation}
\label{eq.Carleman_reg} u \in C^0 ( [ 0, T ]; \mf{D} ( A_\sigma^2 ) ) \cap C^1 ( ( 0, T ); \mf{D} ( A_\sigma ) )
\end{equation}
such that $u$ vanishes in a neighborhood of $\Omega \cap \partial B_\varepsilon ( x_0 )$.
\end{theorem}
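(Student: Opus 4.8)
The plan is to integrate the pointwise inequality \eqref{eq.pointC} of Theorem \ref{thm.pointC} over the region $(0,T) \times [ B^\Omega_\varepsilon (x_0) \cap \{ y > \delta \} ]$ and to track the resulting boundary contributions. First I would record that the regularity hypothesis \eqref{eq.Carleman_reg}, combined with Proposition \ref{elliptic} (applied to $u$ and to $A_\sigma u$) and the Hardy inequality \eqref{hardy_main}, ensures that $u(t)$ lies in a weighted $H^2$-space uniformly in $t \in [0,T]$ — in particular $\sup_{ t \in [0,T] } ( \| \nabla u(t) \|_{ L^2 (\Omega) } + \| y^{-1} u(t) \|_{ L^2 (\Omega) } ) < \infty$ — and that $\partial_t u \in C^1 ( (0,T); \mf{D}(A_\sigma) )$. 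This is exactly the regularity needed to turn the currents $J^t$ and $J$ from the proof of Theorem \ref{thm.pointC} into genuine functions whose space-time divergence is integrable on $\{ y > \delta \}$, a region that stays at a positive distance from the singular set $\Gamma$, so that the divergence theorem is applicable there.

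Next I would dispose of the divergence term $2 ( \partial_t J^t + \nabla \cdot J )$. For the temporal part, $\int_0^T \partial_t J^t \, dt = J^t |_{ t = T } - J^t |_{ t = 0 }$, and this vanishes after integrating in $x$: on $\{ y > \delta \}$ the spatial factor $g := \tfrac{1}{1+2p} y^{1+2p} + |w|^2$ of the weight is bounded below by some $c_\delta > 0$, so $e^{ -2 \lambda f } = e^{ -2 \lambda \theta(t) g } \leq e^{ -2 \lambda c_\delta \theta(t) }$, and together with \eqref{eq.Jt} and the uniform-in-$t$ bounds above one gets $\int_{ \{ y > \delta \} } | J^t (t, \cdot) | \lesssim_{ \delta, \lambda } \theta(t)^2 e^{ -2 \lambda c_\delta \theta(t) } \to 0$ as $t \searrow 0$ or $t \nearrow T$. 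For the spatial part, the divergence theorem turns $\int_{ \{ y > \delta \} \cap B^\Omega_\varepsilon } \nabla \cdot J$ into a surface integral over $\partial ( \{ y > \delta \} \cap B^\Omega_\varepsilon )$; the portion lying on $\partial B_\varepsilon (x_0)$ contributes nothing since $u$, hence $J$, vanishes there by assumption, while on $\{ y = \delta \}$ the outward unit normal of $\{ y > \delta \}$ is $- \nabla y$ (recall $| \nabla y | = 1$ near $\Gamma$, where $y = d_\Gamma$), so that $\int_{ \{ y > \delta \} \cap B^\Omega_\varepsilon } \nabla \cdot J = - \int_{ \{ y = \delta \} \cap B_\varepsilon } \nabla y \cdot J$.

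Assembling these computations, integration of \eqref{eq.pointC} produces a lower bound for $\int e^{ -2 \lambda f } | ( \pm \partial_t + \Delta_\sigma ) u |^2$ over $(0,T) \times [ B^\Omega_\varepsilon(x_0) \cap \{ y > \delta \} ]$ by the sum of $- 2 \int_{ (0,T) \times [ B^\Omega_\varepsilon(x_0) \cap \{ y = \delta \} ] } \nabla y \cdot J$ and $C$ times the two positive bulk terms $e^{ -2 \lambda f } [ \lambda \theta y^{2p} | \nabla u |^2 + ( \lambda^3 \theta^3 y^{ -1 + 6p } + \lambda \theta y^{ -2 + 2p } ) u^2 ]$. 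Rearranging and inserting the upper bound \eqref{eq.J} for $\nabla y \cdot J$ on $\{ y = \delta \}$ — namely $\nabla y \cdot J \leq \partial_t ( e^{ -\lambda f } u ) D_y ( e^{ -\lambda f } u ) + \bar{C} e^{ -2 \lambda f } \lambda \theta y^{2p} ( D_y u )^2 + \bar{C} e^{ -2 \lambda f } \lambda^3 \theta^3 y^{ -2 + 2p } u^2$ — yields exactly the three terms on the right-hand side of \eqref{eq.Carleman}, after relabelling $C$ and $\bar{C}$; this is the claim.

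I expect the delicate point to be the rigorous justification just sketched: namely, that \eqref{eq.Carleman_reg} really delivers the uniform-in-time weighted-$H^2$ control invoked above (this is where Propositions \ref{elliptic} and \ref{prop_hardy} enter), and that the super-polynomial decay of $e^{ -2 \lambda f }$ as $\theta(t) \to \infty$ overwhelms the polynomial-in-$\theta$ growth of the coefficients appearing in $J^t$, so the temporal boundary terms at $t = 0, T$ truly vanish. Once the domain of integration is cut down to $\{ y > \delta \}$, where $\Gamma$ is at positive distance, the remaining manipulations — the divergence theorem and the estimate of the surface term on $\{ y = \delta \}$ via \eqref{eq.J} — are routine.
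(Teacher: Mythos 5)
Your proposal is correct and follows essentially the same route as the paper's proof: integrate the pointwise estimate \eqref{eq.pointC} over $(0,T)\times[B^\Omega_\varepsilon(x_0)\cap\{y>\delta\}]$, kill the temporal boundary terms at $t=0,T$ using \eqref{eq.Jt} together with the super-exponential vanishing of $e^{-2\lambda f}$ as $\theta\to\infty$ (plus the Hardy inequality and the $H^1$-boundedness of $u$), drop the contribution on $\Omega\cap\partial B_\varepsilon(x_0)$ where $u$ vanishes, and bound the remaining flux through $\{y=\delta\}$ via \eqref{eq.J}. The only cosmetic difference is that you spell out the uniform-in-time weighted-$H^2$ justification more explicitly than the paper does.
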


\begin{proof}
Let $C, \bar{C}, \lambda_0, \varepsilon_0$ as in Theorem \ref{thm.pointC}, and fix any $\lambda \geq \lambda_0$ and $0 < \delta \ll \varepsilon \leq \varepsilon_0$.
To make our notations more concise, we also set
\begin{equation}
\label{eq.om_delta} \omega_{ >\delta } := B^\Omega_\varepsilon ( x_0 ) \cap \{ y > \delta \} \text{,} \qquad \omega_{ = \delta } := B^\Omega_\varepsilon ( x_0 ) \cap \{ y = \delta \} \text{.}
\end{equation}

For any $u$ satisfying \eqref{eq.Carleman_reg}, we integrate \eqref{eq.pointC} over $( 0, T ) \times \omega_\delta$, with $0 < \delta \ll 1$, and we then apply the divergence theorem to obtain
\begin{align}
\label{eql.carleman_0} &C \int_{ ( 0, T ) \times \omega_{ > \delta } } e^{ -2 \lambda f } \big[ \lambda \theta y^{ 2 p } \, | \nabla u |^2 + ( \lambda^3 \theta^3 y^{ -1 + 6 p } + \lambda \theta y^{ -2 + 2 p } ) \, u^2 \big] \\
\notag &\quad \leq \int_{ ( 0, T ) \times \omega_{ > \delta } } e^{ -2 \lambda f } | ( \pm \partial_t + \Delta_\sigma ) u |^2 - 2 \int_{ ( 0, T ) \times \partial \omega_{ > \delta } } \nu \cdot J \\
\notag &\quad \qquad - 2 \int_{ \{ T \} \times \omega_{ > \delta } } J^t + 2 \int_{ \{ 0 \} \times \omega_{ > \delta } } J^t \text{,}
\end{align}
where $\nu$ denotes the outer unit normal of $\omega_{ > \delta }$.

For the last term on the right-hand side of \eqref{eql.carleman_0}, we recall \eqref{eq.Jt} and bound
\begin{align}
\label{eql.carleman_1} \int_{ \{ 0 \} \times \omega_{ >\delta } } | J^t | &\leq \int_{ \{ 0 \} \times \omega_{ >\delta } } e^{ -2 \lambda f } ( | \nabla u |^2 + \lambda^2 \theta^2 d_\Gamma^{-2} u ) \\
\notag &= 0 \text{,}
\end{align}
where the last step is due to \eqref{hardy_main}, the $H^1$-boundedness of $u$, and the exponential vanishing of $e^{ -2 \lambda f }$ at $t = 0$.
An analogous estimate also yields
\begin{equation}
\label{eql.carleman_2} \int_{ \{ T \} \times \omega_{ > \delta } } | J^t | = 0 \text{.}
\end{equation}
Lastly, since $u$ is assumed to vanish near $\Omega \cap \partial B_\varepsilon ( x_0 )$, we then have
\begin{align}
\label{eql.carleman_3} - \int_{ ( 0, T ) \times \partial \omega_{ > \delta } } \nu \cdot J &= \int_{ ( 0, T ) \times \omega_{ = \delta } } \nabla y \cdot J \\
\notag &\leq \bar{C} \int_{ (0, T) \times \omega_{ = \delta } } e^{ -2 \lambda f } [ \lambda \theta y^{ 2 p } \, ( D_y u )^2 + \lambda^3 \theta^3 y^{ -2 + 2 p } \, u^2 ] \\
\notag &\quad\qquad + \int_{ (0, T) \times \omega_{ = \delta } } \partial_t ( e^{ -\lambda f } u ) D_y ( e^{ -\lambda f } u ) \text{,}
\end{align}
where we applied \eqref{eq.J}.
The desired \eqref{eq.Carleman} now follows from \eqref{eql.carleman_0}--\eqref{eql.carleman_3}.
\end{proof}

\begin{remark}
Note all the boundary integrals over $\{ y = \delta \}$ in \eqref{eq.Carleman} are well-defined by the usual trace theorems \cite{Evans}, since $y$ is a positive constant on this hypersurface.
\end{remark}

\section{Unique continuation} \label{S.uc}

Next, we apply the Carleman estimate of Theorem \ref{T.Carleman} to derive a unique continuation property for homogeneous, singular backward heat equations from Problem (O).
While such a property is of independent interest, it also functions as a crucial step toward our main approximate controllability result.

Our precise unique continuation result is as follows:

\begin{theorem}[Unique continuation] \label{T.UC}
Fix $x_0 \in \Gamma$ and a sufficiently small $\varepsilon > 0$ (depending on $T$, $\Omega$, $\sigma$).
Furthermore, let $u_T \in H^1_0 ( \Omega )$, and let $u$ denote the corresponding solution to Problem (O).
If the Neumann trace $\mc{N}_\sigma u$ vanishes everywhere on $( 0, T ) \times B^\Gamma_\varepsilon ( x_0 )$, then $u$ in fact vanishes everywhere on $( 0, T ) \times \Omega$.
\end{theorem}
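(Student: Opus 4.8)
The plan is to combine the local Carleman estimate of Theorem~\ref{T.Carleman} near $x_0$ with the boundary-trace asymptotics of Section~\ref{S.wp} (Propositions~\ref{neumann_limits} and \ref{stupid_boundary}), followed by a classical interior unique continuation property for parabolic equations. The conceptual point, as outlined in the introduction, is that a solution $u$ of Problem~(O) automatically satisfies $\mc{D}_\sigma u \equiv 0$, so that the hypothesis $\mc{N}_\sigma u|_{(0,T)\times B^\Gamma_\varepsilon(x_0)} = 0$ forces $u$ to vanish near $x_0$ at least as fast as the ``worse'' (Neumann) branch $d_\Gamma^{1-\kappa}$, \emph{independently of the sign of $\sigma$}; this is exactly the decay one needs to run the argument with a weight exponent $p \in (-\tfrac{1}{2}, 0)$ as in Definition~\ref{def.f}, even in the regime $0 < \sigma < \tfrac{1}{4}$ where no observability estimate is available. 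Accordingly, I would fix $p \in (-\tfrac{1}{2}, 0)$ satisfying both \eqref{eq.ass_p} and $2p - \kappa > -\tfrac{1}{2}$ (compatible: for $\sigma<0$ take $p \in (\tfrac{\kappa}{2}-\tfrac{1}{4},\kappa]$, for $\sigma>0$ take $|p|$ small).

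Since $u$ solves the \emph{backward} heat equation of Problem~(O) from $H^1_0$-data, interior parabolic smoothing gives $u \in C^\infty((0,T)\times\Omega)$ with $u \in C^0([0,T];H^1_0(\Omega))$, so that the pointwise inequality \eqref{eq.pointC} (with $+\partial_t$) holds a.e.\ on $(0,T)\times B^\Omega_{\varepsilon_0}(x_0)$ and $(\partial_t+\Delta_\sigma)u = -(X\cdot\nabla+V)u$. After a routine justification of the divergence theorem --- using this interior regularity together with the exponential vanishing of $e^{-2\lambda f}$ at $t=0,T$, exactly as in the proof of Theorem~\ref{T.Carleman} --- integrating \eqref{eq.pointC} over $(0,T)\times[B^\Omega_\varepsilon(x_0)\cap\{y>\delta\}]$ yields the analogue of \eqref{eq.Carleman} for $u$, with an additional boundary term over $(0,T)\times[\Omega\cap\partial B_\varepsilon(x_0)]$ that is exponentially small in $\lambda$ because $f$ is bounded below there. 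In that inequality the forcing term $\int e^{-2\lambda f}|(X\cdot\nabla+V)u|^2 \lesssim \int e^{-2\lambda f}(|\nabla u|^2 + d_\Gamma^{-2}u^2)$ (using $(X,V)\in\mc{Z}_0$, hence $d_\Gamma V\in L^\infty$) is absorbed into the left-hand bulk terms for $\lambda$ large: here one uses $p<0$, so that $y^{2p}\geq1$ and $y^{-2+2p}\geq y^{-2}$ near $\Gamma$, together with Hardy's inequality \eqref{hardy_main}.

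The heart of the matter is the boundary integrals over $\{y=\delta\}$ as $\delta\searrow0$. Here the hypothesis enters through Proposition~\ref{neumann_limits}: restricting its estimates to the patch $(0,T)\times B^\Gamma_\varepsilon(x_0)$ and using $\mc{N}_\sigma u = 0$ there, the traces $\eta^\delta u$, $\eta^\delta(D_yu)$ and the ``pre-limit Neumann trace'' $\eta^\delta[y^{2\kappa}D_y(y^{-\kappa}u)]$ all decay in $\delta$, which renders the first boundary term of \eqref{eq.Carleman} negligible; the remaining term $\int_{\{y=\delta\}}\partial_t(e^{-\lambda f}u)D_y(e^{-\lambda f}u)$, which also involves $\partial_t u$ near $\Gamma$, is controlled by expanding it via the Leibniz rule and invoking Proposition~\ref{stupid_boundary}, whose weight hypothesis \eqref{stupid_weight} is met precisely because $2p-\kappa>-\tfrac{1}{2}$ and $|\nabla f|\lesssim\theta(y^{2p}+1)$. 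Once these boundary contributions are shown to be negligible, the estimate collapses to a bound of the form $C\lambda\int_{(0,T)\times B^\Omega_\varepsilon(x_0)}e^{-2\lambda f}[\theta y^{2p}|\nabla u|^2 + \theta y^{-2+2p}u^2] \lesssim e^{-c\lambda}$. Writing $g:=\tfrac{1}{1+2p}y^{1+2p}+|w|^2$ for the spatial factor of $f$ in \eqref{eq.f}, I would restrict the left-hand side to $(a,b)\times\{x\in\Omega: |x-x_0|<\varepsilon,\ g(x)<\rho\}$ for a fixed $[a,b]\subset(0,T)$, on which $e^{-2\lambda f}\geq e^{-2\lambda\rho\sup_{[a,b]}\theta}$ and $y^{-2+2p}\geq1$; choosing $\rho$ small relative to $c$ and letting $\lambda\to\infty$ forces $u\equiv0$ on this set, a nonempty open subset of $(0,T)\times\Omega$ since $g(x)\to0$ as $x\to x_0$. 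Finally, on $(a,b)\times\Omega$ the function $u$ solves a uniformly parabolic equation whose coefficients are bounded on every compact subset of $\Omega$; since $\Omega$ is connected and $u$ vanishes on a nonempty open set, classical unique continuation for parabolic equations (see, e.g., \cite{Fabre, MZ}), applied along a chain of balls compactly contained in $\Omega$, gives $u\equiv0$ on $(a,b)\times\Omega$, and since $[a,b]$ is arbitrary, $u\equiv0$ on $(0,T)\times\Omega$.

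I expect the boundary analysis at $\{y=\delta\}$ to be the main obstacle: one must show that the \emph{local} vanishing of the Neumann trace, together with the automatic vanishing of the Dirichlet trace for $H^1_0$-solutions, produces decay of $u$, $D_yu$ and $\partial_t u$ near $x_0$ strong enough to overcome both the negative powers $y^{2p}$, $y^{-2+2p}$ and the temporal blow-up of $\theta$ built into the weight --- quite possibly requiring a short bootstrap to sharpen the crude rates coming out of Proposition~\ref{neumann_limits}. This is also precisely the mechanism by which the ``negative-$\sigma$'' weight remains admissible when $0<\sigma<\tfrac{1}{4}$; by contrast, the regularity bookkeeping needed to apply \eqref{eq.pointC} and the divergence theorem up to $t=T$ is routine.
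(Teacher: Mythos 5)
Your overall architecture coincides with the paper's proof: the same choice of $p$ (satisfying \eqref{eq.ass_p} and $2p-\kappa>-\tfrac12$), the same use of Proposition \ref{neumann_limits} together with $\mc{N}_\sigma u\equiv 0$ to kill the first $\{y=\delta\}$ boundary term (the rates there need no bootstrap --- the exponents $\delta^{2p-2\kappa}\cdot\delta^{1+2\kappa}=\delta^{1+2p}\to 0$ work out directly), the same appeal to Proposition \ref{stupid_boundary} for the $\partial_t(e^{-\lambda f}u)\,D_y(e^{-\lambda f}u)$ term, the same exponential-weight comparison on level sets of $f$, and the same reduction to interior parabolic unique continuation. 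The ``conceptual point'' you highlight (that vanishing of $\mc{N}_\sigma u$ lets one run the weight of a negative $\sigma$ even when $\sigma>0$) is exactly the mechanism of the paper.

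The one place where you genuinely deviate is the spatial localization, and as written it has a gap. Theorem \ref{T.Carleman} is stated only for functions vanishing near $\Omega\cap\partial B_\varepsilon(x_0)$; you instead propose to keep the lateral boundary term over $(0,T)\times[\Omega\cap\partial B_\varepsilon(x_0)]$ and dismiss it as exponentially small in $\lambda$. The exponential factor is indeed uniformly small there, but the term $\nu\cdot J$ contains $\partial_t v\,\nu\cdot\nabla v$ and $|\nabla v|^2$, and the well-posedness theory only gives $\partial_t u\in L^2((0,T)\times\Omega)$ and $\nabla u\in L^2((0,T)\times\Omega)$ (Proposition \ref{wp_strict}); neither has a well-defined trace on a \emph{fixed} hypersurface transversal to $\Gamma$, so the claimed bound is not justified as stated. (This is repairable --- e.g.\ by averaging over radii $\varepsilon'\in(\varepsilon/2,\varepsilon)$ to select a good sphere --- but it is not ``routine''.) The paper sidesteps the issue entirely by multiplying $u$ by a cutoff $\chi(f)$ in the \emph{weight}, so that $u\,\chi(f)$ vanishes near $\Omega\cap\partial B_\varepsilon(x_0)$ and no lateral term ever appears; the price is a commutator error supported in $\{f_0/2\le f\le 3f_0/4\}$, which is precisely what the exponential comparison step is designed to absorb. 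Relatedly, since the pointwise estimate \eqref{eq.pointC} requires $C^0 H^2_{\mathrm{loc}}\cap C^1 H^1_{\mathrm{loc}}$ regularity that $H^1_0$ final data does not provide up to $\Gamma$, you should first derive the integrated inequality for $u_T\in C^\infty_0(\Omega)$ and then pass to $u_T\in H^1_0(\Omega)$ by approximation, using that every term is controlled by $\|u_T\|_{H^1(\Omega)}$ via Propositions \ref{wp_strict} and \ref{neumann_trace}--\ref{stupid_boundary}; interior parabolic smoothing alone does not suffice for the $\delta\searrow 0$ limit of the boundary terms.
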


\subsection{Proof of Theorem \ref{T.UC}}

First, suppose $u_T \in C^\infty_0 ( \Omega )$, so that the solution $u$ satisfies \eqref{eq.Carleman_reg}.
Let $f$ be as in Definition \ref{def.f} (with our given $x_0$ and $\varepsilon$).
In addition, fix $0 < f_0 \ll 1$ and a smooth cutoff function
\begin{equation}
\label{cutoff} \chi: (0, f_0 ) \to [0, 1] \text{,} \qquad \chi (s) := \begin{cases} 1 & s \in \big( 0, \tfrac{f_0}{2} \big) \text{,} \\ 0 & s \in \big( \tfrac{3 f_0}{4}, f_0 \big) \text{,} \end{cases}
\end{equation}
with $\chi'$ denoting differentiation of $\chi$ in its parameter $s$.
We then have
\begin{align}
\label{boundPsigma} | ( \partial_t + \Delta_\sigma ) ( u \cdot \chi(f) ) | &\lesssim | ( \partial_t + \Delta_\sigma ) u | \chi(f) + ( | \chi'(f) | + | \chi''(f) | ) \big( | \nabla u | + |u| \big) \\
\notag &\lesssim | \nabla u | + y^{-1} \, |u| \text{,}
\end{align}
where we recalled the equation \eqref{heat_ex}, that the coefficients $( X, V )$ lie in $\mc{Z}$, and that $\chi'(f)$ and $\chi''(f)$ are supported in the region $\{\smash{ \frac{f_0}{2} \leq f \leq \frac{ 3 f_0 }{4} }\}$.

Now, as $f$ vanishes at $x_0$ and is positive nearby, then by taking $f_0$ to be sufficiently small, we can ensure that $u \cdot \chi (f)$ vanishes on $\Omega \cap \partial B_\varepsilon ( x_0 )$.
Thus, we can apply the Carleman estimate of Theorem \ref{T.Carleman} to $u \cdot \chi(f)$ in order to obtain
\begin{align*}
& \int_{ \{ f \leq \frac{f_0}{2} \} \cap \{ y \geq \delta \} } e^{-2 \lambda f} \big( \lambda \theta y^{ 2 p } | \nabla u |^2 + \lambda \theta y^{ -2 + 2 p } \, u^2 \big) \\
&\quad \leq \bar{C} \int_{ ( 0, T ) \times [ B^\Omega_\varepsilon ( x_0 ) \cap \{ y = \delta \} ] } e^{ -2 \lambda f } [ \lambda \theta y^{ 2 p } \, ( D_y u )^2 + \lambda^3 \theta^3 y^{ -2 + 2 p } \, u^2 ] \\
&\quad\qquad + \int_{ ( 0, T ) \times [ B^\Omega_\varepsilon ( x_0 ) \cap \{ y = \delta \} ] } \partial_t ( e^{ -\lambda f } u ) D_y ( e^{ -\lambda f } u ) \\
&\quad\qquad + C \int_{ \{ 0 \leq f \leq \frac{ 3 f_0 }{4} \} \cap \{ y \geq \delta \} } e^{-2 \lambda f} \big( | \nabla u |^2 + y^{-2} u^2 \big)
 \text{,}
\end{align*}
where $\varepsilon$ and $\lambda$ are sufficiently small and large, respectively; where $0 < \delta \ll \varepsilon$ and $C, \bar{C} > 0$; and where $p$ is chosen to satisfy both \eqref{eq.ass_p} and $2 p - \kappa > -\frac{1}{2}$.
Further expanding the integrand $\partial_t ( e^{ -\lambda f } u ) D_y ( e^{ -\lambda f } u )$ in the above, we then have
\begin{align}
\label{intC_loc_intermediate} & \int_{ \{ f \leq \frac{f_0}{2} \} \cap \{ y \geq \delta \} } e^{-2 \lambda f} \big( \lambda \theta y^{ 2 p } | \nabla u |^2 + \lambda \theta y^{ -2 + 2 p } \, u^2 \big) \\
\notag &\quad \leq \bar{C} \int_{ ( 0, T ) \times [ B^\Omega_\varepsilon ( x_0 ) \cap \{ y = \delta \} ] } e^{ -2 \lambda f } [ \lambda \theta y^{ 2 p } \, ( D_y u )^2 + \lambda^3 \theta^3 y^{ -2 + 2 p } \, u^2 ] \\
\notag &\quad\qquad + \int_{ ( 0, T ) \times [ B^\Omega_\varepsilon ( x_0 ) \cap \{ y = \delta \} ] } \partial_t ( y^{ -\kappa } u ) [ w_1 \, y^{ 2 \kappa } D_y ( y^{ -\kappa } u ) + w_0 \, y^{ -1 + \kappa } u ] \\
\notag &\quad\qquad + C \int_{ \{ 0 \leq f \leq \frac{ 3 f_0 }{4} \} \cap \{ y \geq \delta \} } e^{-2 \lambda f} \big( | \nabla u |^2 + y^{-2} u^2 \big)
 \text{,}
\end{align}
where the weights $w_0$ and $w_1$ both satisfy \eqref{stupid_weight}.
(Note that the terms arising from $\partial_t$ hitting $e^{ -\lambda f }$ can be absorbed into the first term on the right-hand side.)

At this point, we have only established \eqref{intC_loc_intermediate} for $u_T \in C^\infty_0 ( \Omega )$.
However, since $\delta \leq y \lesssim 1$ on $\{ y \geq \delta \}$, and since $y$ is constant on $\{ y = \delta \}$, then each term in \eqref{intC_loc_intermediate} is in fact controlled by the $H^1$-norm of $u_T$.
(This is a consequence of Propositions \ref{wp_strict} and \ref{neumann_trace}--\ref{stupid_boundary}.)
Thus, by an approximation, we conclude that \eqref{intC_loc_intermediate} still holds when $u_T \in H^1_0 ( \Omega )$.
From here on, we will assume the general setting $u_T \in H^1_0 ( \Omega )$.

We next take the limit $\delta \searrow 0$ in \eqref{intC_loc_intermediate}.
Since $\mc{N}_\sigma u \equiv 0$ on $( 0, T ) \times B^\Gamma_\varepsilon ( x_0 )$ by assumption, then Proposition \ref{neumann_limits} and \eqref{eq.ass_p} imply
\begin{align}
\label{intC_loc_11} &\lim_{ \delta \searrow 0 } \int_{ ( 0, T ) \times [ B^\Omega_\varepsilon ( x_0 ) \cap \{ y = \delta \} ] } e^{ -2 \lambda f } [ \lambda \theta y^{ 2 p } \, ( D_y u )^2 + \lambda^3 \theta^3 y^{ -2 + 2 p } \, u^2 ] \\
\notag &\quad \lesssim \lambda^3 \lim_{ \delta \searrow 0} \delta^{ 2 p - 2 \kappa } \int_{ ( 0, T ) \times [ B^\Omega_\varepsilon ( x_0 ) \cap \{ y = \delta \} ] } | \eta^\delta ( y^\kappa D_y u ) |^2 + | \eta^\delta ( y^{ -1 + \kappa } u ) |^2 \\
\notag &\quad \lesssim \lambda^3 \lim_{ \delta \searrow 0 } \delta^{ 2 p + 1 } \| u_T \|_{ H^1 ( \Omega ) }^2 \\
\notag &\quad = 0 \text{.}
\end{align}
Likewise, since $2 p - \kappa > -\frac{1}{2}$ and $w_0, w_1$ satisfy \eqref{stupid_weight}, then \eqref{stupid_limit} implies
\begin{equation}
\label{intC_loc_12} \lim_{ \delta \searrow 0 } \int_{ ( 0, T ) \times [ B^\Omega_\varepsilon ( x_0 ) \cap \{ y = \delta \} ] } \partial_t ( y^{ -\kappa } u ) [ w_1 \, y^{ 2 \kappa } D_y ( y^{ -\kappa } u ) + w_0 \, y^{ -1 + \kappa } u ] = 0 \text{.}
\end{equation}
Combining \eqref{intC_loc_11}--\eqref{intC_loc_12}, then \eqref{intC_loc_intermediate} in the limit becomes
\[
\lambda \int_{ \{ f \leq \frac{f_0}{2} \} } e^{-2 \lambda f} \theta \big( | \nabla u |^2 + y^{ -2 } \, u^2 \big) \lesssim \int_{ \{ 0 \leq f \leq \frac{ 3 f_0 }{4} \} } e^{-2 \lambda f} \big( | \nabla u |^2 + y^{-2} u^2 \big) \text{,}
\]

By taking $\lambda$ large enough in the above, then part of the right-hand side can be absorbed into the left-hand side, and we hence obtain
\begin{align}
\label{intC_loc_intermediate1} &\lambda \int_{ \{ f \leq \frac{f_0}{2} \} } e^{-2 \lambda f} \theta \big( | \nabla u |^2 + y^{-2} \, u^2 \big) \lesssim \int_{ \{ \frac{f_0}{2} \leq f \leq \frac{ 3 f_0 }{4} \} } e^{-2 \lambda f} \big( | \nabla u |^2 + y^{-2} \, u^2 \big) \text{.}
\end{align}
Observe now that since
\[
e^{-2 \lambda f} \begin{cases}
\geq e^{- \lambda f_0 } & 0 \leq f \leq \frac{ f_0 }{2} \text{,} \\
\leq e^{- \lambda f_0 } & \frac{ f_0 }{2} \leq f \leq \frac{ 3 f_0 }{4} \text{,}
\end{cases}
\]
then the above combined with \eqref{intC_loc_intermediate1} yields
\begin{align}
\label{intC_loc_intermediate2} &\lambda e^{-\lambda f_0} \int_{ \{ f \leq \frac{f_0}{2} \} } \theta \big( | \nabla u |^2 + y^{-2} \, u^2 \big) \lesssim e^{-\lambda f_0} \int_{ \{ \frac{f_0}{2} \leq f \leq \frac{ 3 f_0 }{4} \} } \big( | \nabla u |^2 + y^{-2} u^2 \, \big) \text{.}
\end{align}
The exponential factors in \eqref{intC_loc_intermediate2} now cancel, so that taking $\lambda \nearrow \infty$ now yields
\begin{equation}
\label{intC_loc_intermediate3} u |_{ \{ f \leq \frac{ f_0 }{2} \} } \equiv 0 \text{.}
\end{equation}

In particular, by \eqref{eq.f}, the above implies that for $0 < \alpha \ll T$, we have that $u$ vanishes on $( \alpha, T - \alpha ) \times \omega^\alpha$, for some neighborhood $\omega^\alpha$ of $x_0$ in $\Omega$.
Since $u$ vanishes in a neighborhood away from the boundary, then standard parabolic unique continuation results yield that $u \equiv 0$ on $( \alpha, T - \alpha ) \times \Omega$.
(Indeed, the key point is that the equation \eqref{heat_ex} is non-singular away from the boundary, so that the above immediately follows from Carleman estimates for non-singular parabolic equations, e.g.\ \cite{LRLB, Yamamoto}, or alternatively via the Carleman estimate of \cite{BZuazua} for interior control.)
Finally, letting $\alpha \searrow 0$ in the above completes the proof.


\section{Approximate Control} \label{S.proof}

In this section, we prove our main approximate controllability result, Theorem \ref{T.approx0}.
For convenience, we first restate Theorem \ref{T.approx0} in the language of Section \ref{S.wp}:

\begin{theorem} \label{thm.approx0}
Suppose Assumptions \ref{ass.domain} and \ref{ass.sigma} hold.
Also, let $( Y, W ) \in \mc{Z}_0$, and fix any open $\omega \subseteq \Gamma$.
Then, given any $T > 0$, any $v_0, v_T \in H^{-1} (\Omega)$, and any $\epsilon > 0$, there exists $v_d \in L^2 ( (0, T) \times \Gamma )$, supported within $( 0, T ) \times \omega$, such that the solution $v$ of Problem (C), with the above $v_0$ and $v_d$, satisfies
\begin{equation}
\label{eq.approx_ctrl} \| v(T) - v_T \|_{ H^{-1} ( \Omega ) } \leq \epsilon \text{.}
\end{equation}
\end{theorem}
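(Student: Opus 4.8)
The plan is to deduce approximate controllability from the unique continuation property of Theorem \ref{T.UC} via the Hilbert Uniqueness Method (HUM), combined with the dual well-posedness theories established in Section \ref{S.wp}. First I would observe that by subtracting off the free evolution—i.e.\ by replacing $v_T$ with $v_T - v_1(T)$, where $v_1$ is the dual solution of Problem (C) with data $v_0$ and $v_d \equiv 0$, which exists and lies in $C^0([0,T];H^{-1}(\Omega))$ by Proposition \ref{wp_weak}—it suffices to treat the case $v_0 = 0$. Thus the task reduces to showing that the \emph{reachable set}
\[
\mc{R} := \{ v(T) \mid v \text{ solves Problem (C) with } v_0 = 0, \ v_d \in L^2((0,T)\times\Gamma), \ \mathrm{supp}\, v_d \subseteq (0,T)\times\omega \}
\]
is dense in $H^{-1}(\Omega)$. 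Since $\mc{R}$ is a linear subspace, density is equivalent to the statement that the only $\psi \in (H^{-1}(\Omega))^* = H^1_0(\Omega)$ annihilating $\mc{R}$ is $\psi = 0$.

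Next I would compute the annihilator condition. Fix $\psi \in H^1_0(\Omega)$ and let $u$ be the solution of Problem (O) with final data $u_T = \psi$ and lower-order coefficients $(X,V)$ given by \eqref{dual_XV}; by Propositions \ref{wp_strict} and \ref{neumann_trace}, $u \in C^0([0,T];H^1_0(\Omega))$ with $\mc{N}_\sigma u \in L^2((0,T)\times\Gamma)$ well-defined. Applying the duality identity \eqref{weak_id} with $v_0 = 0$ gives
\[
\langle \psi, v(T) \rangle = \int_\Omega u_T\, v(T) = - \int_{(0,T)\times\Gamma} \mc{N}_\sigma u \, v_d
\]
for every admissible $v_d$. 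Hence $\psi \perp \mc{R}$ if and only if $\int_{(0,T)\times\omega} \mc{N}_\sigma u \, v_d = 0$ for all $v_d \in L^2((0,T)\times\omega)$, which is equivalent to $\mc{N}_\sigma u \equiv 0$ on $(0,T)\times\omega$. Since $\omega$ is open, it contains a set of the form $B^\Gamma_\varepsilon(x_0)$ for some $x_0 \in \Gamma$ and small $\varepsilon > 0$, so Theorem \ref{T.UC} applies and forces $u \equiv 0$ on $(0,T)\times\Omega$. By continuity of $u$ in time into $H^1_0(\Omega)$ (or by backward uniqueness for the heat semigroup of Corollary \ref{semigroup}), this yields $\psi = u_T = u(T) = 0$, completing the density argument and hence the proof.

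The main obstacle—and the place where care is needed—is verifying that the duality pairing \eqref{weak_id} is applicable in exactly the regularity classes at hand, in particular that $u_T = \psi$ ranges over \emph{all} of $H^1_0(\Omega)$ (not merely a dense subspace) when testing against $v(T) \in H^{-1}(\Omega)$; this is precisely what Proposition \ref{wp_weak} provides, via the approximation argument extending \eqref{duality_20} from $u_T \in C^\infty_0(\Omega)$ to $u_T \in H^1_0(\Omega)$, together with the hidden-regularity bound \eqref{hidden_reg} that keeps $\mc{N}_\sigma u$ under control. A secondary point is that Theorem \ref{T.UC} requires $\varepsilon$ small depending only on $T, \Omega, \sigma$; this is harmless since any open $\omega$ contains arbitrarily small relative balls $B^\Gamma_\varepsilon(x_0)$, and shrinking the control region only shrinks $\mc{R}$, making density harder to prove—so proving it for the smaller region suffices. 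Everything else is the standard HUM bookkeeping, and the construction of the actual control $v_d$ (e.g.\ as a minimizer of the usual quadratic functional penalized by $\epsilon\|\cdot\|$) then follows by the classical Fabre–Puel–Zuazua / Lions duality argument once density is in hand.
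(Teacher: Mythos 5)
Your argument is correct, but it follows a genuinely different route from the paper's. You prove the theorem by showing that the reachable set $\mc{R}$ (with $v_0 \equiv 0$) is dense in $H^{-1}(\Omega)$ via Hahn--Banach: a nonzero annihilator $\psi \in H^1_0(\Omega)$ would, through the transposition identity \eqref{weak_id}, force $\mc{N}_\sigma u \equiv 0$ on $(0,T)\times\omega$ for the adjoint solution $u$ with $u_T = \psi$, and Theorem \ref{T.UC} then gives $u \equiv 0$, hence $\psi = 0$ by the $C^0([0,T];H^1_0(\Omega))$-continuity from Proposition \ref{wp_strict}. The paper instead runs the penalized variational form of HUM: it minimizes $I_\epsilon(u_T) = \epsilon\|u_T\|_{H^1(\Omega)} + \tfrac{1}{2}\int_{(0,T)\times\omega}(\mc{N}_\sigma u)^2 + \int_\Omega u_T v_T$ over $H^1_0(\Omega)$ (Lemmas \ref{thm.variational} and \ref{thm.minimizer}), with unique continuation entering to establish coercivity of $I_\epsilon$ along normalized minimizing sequences, and the one-sided Euler--Lagrange inequality at the minimizer $\varphi_T$ delivering the $\epsilon$-bound with the control given explicitly as $v_d = \mc{N}_\sigma\varphi$ on $(0,T)\times\omega$. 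Both proofs rest on exactly the same two ingredients---the duality identity \eqref{weak_id} of Proposition \ref{wp_weak} and Theorem \ref{T.UC}---so the difference is in the functional-analytic packaging: your density argument is shorter and purely existential, while the paper's variational argument additionally produces a concrete (quasi-optimal) control and is the formulation that lends itself to quantitative statements about the cost of approximate controllability. Your reduction to $v_0 \equiv 0$, the identification of the annihilator condition with the vanishing of $\mc{N}_\sigma u$ on $(0,T)\times\omega$, and the remark that any open $\omega$ contains some $B^\Gamma_\varepsilon(x_0)$ with $\varepsilon$ small enough for Theorem \ref{T.UC} are all handled correctly and match what the paper itself uses inside Lemma \ref{thm.minimizer}.
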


\subsection{Proof of Theorem \ref{thm.approx0}}

First, recall that it suffices to only consider the case $v_0 \equiv 0$.
(To see this, we suppose $v_0 \not\equiv 0$, and we let $v_\ast$ be the value at $t = T$ of the solution to Problem (C), with this $v_0$ and with $v_d \equiv 0$.
Then, by linearity, a control $v_d \in L^2 ( ( 0, T ) \times \Gamma )$ taking $v_0$ to be $\varepsilon$-close to $v_T$ at time $t = T$ is equivalent to $v_d$ taking zero initial data to be $\varepsilon$-close to $v_T - v_\ast$ at time $t = T$.)

We also state the following characterization of approximate controls:

\begin{proposition} \label{thm.ctrl_char}
A solution $v$ of Problem (C), with $v_0 \equiv 0$ and $v_d \in L^2 ( ( 0, T ) \times \Gamma )$, satisfies $v (T) = \psi_T \in H^{-1} ( \Omega )$ if and only if for any $u_T \in H^1_0 ( \Omega )$, we have
\begin{equation}
\label{eq.ctrl_char} \int_{ ( 0, T ) \times \Gamma } \mc{N}_\sigma u \, v_d + \int_\Omega u_T \psi_T = 0 \text{,}
\end{equation}
where $u$ is the (semigroup) solution to Problem (O), with data $u_T$ as above, and with the lower-order coefficients $( X, V )$ defined as in \eqref{dual_XV}.
\end{proposition}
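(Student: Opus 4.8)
The plan is to read off the statement directly from the duality identity \eqref{weak_id} established in Proposition \ref{wp_weak}. Recall that for $v_0 \equiv 0$, if $v$ is the dual solution of Problem (C) with Dirichlet data $v_d \in L^2((0,T)\times\Gamma)$, and if $u$ is the (semigroup) solution of Problem (O) with final data $u_T \in H^1_0(\Omega)$ and lower-order coefficients $(X,V)$ as in \eqref{dual_XV}, then \eqref{weak_id} reads
\[
\int_\Omega u_T \, v(T) + \int_{(0,T)\times\Gamma} \mc{N}_\sigma u \, v_d = 0 \text{.}
\]
First I would record that this identity is valid for \emph{every} $u_T \in H^1_0(\Omega)$; this is exactly the content of Proposition \ref{wp_weak} (the $F\equiv0$ case), noting that Problem (O) is Problem (OI) with $F\equiv0$, and that $(Y,W)\in\mc{Z}_0$ forces $(X,V)\in\mc{Z}_0$ by the remark following \eqref{dual_XV}.

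For the forward implication, suppose $v(T) = \psi_T$. Substituting $v(T) = \psi_T$ into \eqref{weak_id} immediately yields \eqref{eq.ctrl_char} for all $u_T\in H^1_0(\Omega)$. For the reverse implication, suppose \eqref{eq.ctrl_char} holds for all $u_T\in H^1_0(\Omega)$. Subtracting \eqref{eq.ctrl_char} from \eqref{weak_id} gives
\[
\int_\Omega u_T \, ( v(T) - \psi_T ) = 0 \text{,} \qquad u_T \in H^1_0(\Omega) \text{.}
\]
Since $v(T) \in H^{-1}(\Omega)$ by Proposition \ref{wp_weak} and $\psi_T \in H^{-1}(\Omega)$ by hypothesis, and since $H^1_0(\Omega)$ is (by definition) the predual whose pairing with $H^{-1}(\Omega)$ is nondegenerate, the vanishing of this pairing against every $u_T$ forces $v(T) - \psi_T = 0$ in $H^{-1}(\Omega)$, i.e. $v(T) = \psi_T$.

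There is no real obstacle here: the proposition is essentially a restatement of \eqref{weak_id} together with the nondegeneracy of the $H^{-1}$--$H^1_0$ duality pairing. The only point requiring a word of care is that \eqref{weak_id} as stated in Proposition \ref{wp_weak} is proved for the solution $u$ of Problem (O) with final data $u_T \in H^1_0(\Omega)$, which is precisely the class of test functions appearing in \eqref{eq.ctrl_char}, so no further approximation or density argument is needed.
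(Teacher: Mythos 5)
Your proposal is correct and follows exactly the paper's argument: the paper's proof also reads Proposition \ref{thm.ctrl_char} directly off the duality identity \eqref{weak_id} (with the $v_0$-term dropped since $v_0 \equiv 0$) together with the nondegeneracy of the $H^{-1}$--$H^1_0$ pairing. Your write-up merely spells out the two implications in slightly more detail.
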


\begin{proof}
This is an immediate consequence of \eqref{weak_id}, along with the fact that $v_T$ is completely characterized by its action on every $u_T \in H^1_0 ( \Omega )$.
\end{proof}

Next, we define the functional $I_\epsilon: H^1_0 ( \Omega ) \rightarrow \R$, with $\epsilon>0$, by
\begin{equation}
\label{eq.functional} I_\epsilon ( u_T ) := \epsilon \| u_T \|_{ H^1 ( \Omega ) } + \tfrac{1}{2} \int_{ ( 0, T ) \times \omega } ( \mc{N}_\sigma u )^2 + \int_\Omega u_T v_T \text{,}
\end{equation}
with $u$ again being the solution to Problem (O) with $u_T$ as above and $( X, V )$ as in \eqref{dual_XV}.
Then, the minimizers of $I_\epsilon$ yield the desired approximate controls:

\begin{lemma} \label{thm.variational}
Suppose $\varphi_T$ is a minimizer of $I_\epsilon$, and let $\varphi$ be the solution of Problem (O), with $u_T := \varphi_T$, and with $( X, V )$ as in \eqref{dual_XV}.
Then, the solution $v$ to Problem (C), with initial data $v_0 \equiv 0$ and Dirichlet trace
\begin{equation}
\label{eq.variational} v_d := \begin{cases}
  \mc{N}_\sigma \varphi & \text{on $( 0, T ) \times \omega$,} \\
  0 & \text{on $( 0, T ) \times ( \Gamma \setminus \omega )$,}
\end{cases}
\end{equation}
satisfies the estimate \eqref{eq.approx_ctrl}.
\end{lemma}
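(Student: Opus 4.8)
The plan is to run the standard variational/dual HUM argument, using the characterization of approximate controls from Proposition \ref{thm.ctrl_char} together with the well-posedness results of Section \ref{S.wp}. First I would observe that everything in the statement is well-defined: for $u_T \in H^1_0(\Omega)$ the associated solution $u$ of Problem (O) (which is Problem (OI) with $F \equiv 0$) has $\mc{N}_\sigma u \in L^2((0,T)\times\Gamma)$ with $\|\mc{N}_\sigma u\|_{L^2((0,T)\times\Gamma)} \lesssim \|u_T\|_{H^1(\Omega)}$ by Proposition \ref{neumann_trace}, so $I_\epsilon$ is finite and continuous on $H^1_0(\Omega)$; moreover $I_\epsilon$ is convex, being a sum of a norm, a nonnegative quadratic form, and a linear functional. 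Also, $v_d$ in \eqref{eq.variational} belongs to $L^2((0,T)\times\Gamma)$ and is supported in $(0,T)\times\omega$, so Proposition \ref{wp_weak} furnishes the solution $v$ of Problem (C), with $v(T) \in H^{-1}(\Omega)$.

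Next I would extract the first-order optimality condition at the minimizer $\varphi_T$. Fix $u_T \in H^1_0(\Omega)$ with associated solution $u$ of Problem (O); by linearity of Problem (O) and of the trace operator $\mc{N}_\sigma$, the data $\varphi_T + h u_T$ produces the solution $\varphi + h u$, whose Neumann trace on $\omega$ equals $\mc{N}_\sigma \varphi + h\,\mc{N}_\sigma u$. Expanding $I_\epsilon(\varphi_T + h u_T) - I_\epsilon(\varphi_T) \ge 0$ gives, for every $h \in \R$,
\begin{align*}
0 &\leq \epsilon\big( \|\varphi_T + h u_T\|_{H^1(\Omega)} - \|\varphi_T\|_{H^1(\Omega)} \big) \\
&\qquad + h\Big( \int_{(0,T)\times\omega} \mc{N}_\sigma\varphi\, \mc{N}_\sigma u + \int_\Omega u_T v_T \Big) + \tfrac{h^2}{2} \int_{(0,T)\times\omega} (\mc{N}_\sigma u)^2 \text{.}
\end{align*}
Dividing by $h > 0$ and letting $h \searrow 0$ — the difference quotient of the norm converging, by convexity, to a limit that is at most $\|u_T\|_{H^1(\Omega)}$ in absolute value by the reverse triangle inequality — yields
\[
\epsilon \|u_T\|_{H^1(\Omega)} + \int_{(0,T)\times\omega} \mc{N}_\sigma\varphi\, \mc{N}_\sigma u + \int_\Omega u_T v_T \geq 0 \text{.}
\]
Applying this with $-u_T$ in place of $u_T$ (so that $u$ is replaced by $-u$) and combining the two inequalities gives the two-sided bound
\[
\Big| \int_{(0,T)\times\omega} \mc{N}_\sigma\varphi\, \mc{N}_\sigma u + \int_\Omega u_T v_T \Big| \leq \epsilon \|u_T\|_{H^1(\Omega)} \text{,} \qquad u_T \in H^1_0(\Omega) \text{.}
\]

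Finally, I would identify the bracketed quantity with the pairing of $u_T$ against $v_T - v(T)$. By \eqref{eq.variational} we have $\int_{(0,T)\times\Gamma} \mc{N}_\sigma u\, v_d = \int_{(0,T)\times\omega} \mc{N}_\sigma u\, \mc{N}_\sigma\varphi$; and Proposition \ref{thm.ctrl_char}, applied with $\psi_T := v(T)$ (legitimate since $v(T)\in H^{-1}(\Omega)$ is exactly the time-$T$ value of the solution just constructed, cf.\ \eqref{weak_id}), gives $\int_\Omega u_T\, v(T) = - \int_{(0,T)\times\Gamma} \mc{N}_\sigma u\, v_d$ for all $u_T \in H^1_0(\Omega)$. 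Substituting, the last bound becomes $|\int_\Omega u_T (v_T - v(T))| \leq \epsilon \|u_T\|_{H^1(\Omega)}$ for all $u_T \in H^1_0(\Omega)$; since $v_T - v(T) \in H^{-1}(\Omega) = (H^1_0(\Omega))^{*}$, taking the supremum over $\|u_T\|_{H^1(\Omega)} \leq 1$ yields $\|v(T) - v_T\|_{H^{-1}(\Omega)} \leq \epsilon$, which is \eqref{eq.approx_ctrl}. The one point requiring a little care is the non-differentiability of the term $\epsilon \|u_T\|_{H^1(\Omega)}$ in \eqref{eq.functional}; as indicated, this is handled by passing to one-sided difference quotients and invoking the reverse triangle inequality rather than attempting to differentiate the norm. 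Everything else — linearity of Problem (O) and of $\mc{N}_\sigma$, the characterization in Proposition \ref{thm.ctrl_char}, and the $H^1_0$–$H^{-1}$ duality — is routine.
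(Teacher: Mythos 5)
Your proposal is correct and follows essentially the same route as the paper: the one-sided difference quotients of $I_\epsilon$ at the minimizer (with the reverse triangle inequality handling the non-smooth norm term) yield the two-sided bound $|\int_{(0,T)\times\omega}\mc{N}_\sigma\varphi\,\mc{N}_\sigma u + \int_\Omega u_T v_T| \leq \epsilon\|u_T\|_{H^1(\Omega)}$, which is then converted via Proposition \ref{thm.ctrl_char} and $H^1_0$--$H^{-1}$ duality into \eqref{eq.approx_ctrl}. The only cosmetic difference is that the paper obtains the two-sided bound by taking $s\to 0^+$ and $s\to 0^-$ separately, whereas you take $h\searrow 0$ and then replace $u_T$ by $-u_T$; these are equivalent.
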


\begin{proof}
Consider linear variations of $\varphi_T$---for any $s \in \R \setminus \{ 0 \}$ and $u_T \in H^1_0 ( \Omega )$,
\begin{align}
\label{eql.variational_0} \frac{ I_\epsilon ( \varphi_T + s u_T ) - I_\epsilon ( \varphi_T ) }{s} &= \frac{ \epsilon [ \| \varphi_T + s u_T \|_{ H^1 ( \Omega ) } - \| \varphi_T \|_{ H^1 ( \Omega ) } ] }{s} \\
\notag &\qquad + \int_{ ( 0, T ) \times \omega } \big[ \mc{N}_\sigma \varphi \mc{N}_\sigma u + \tfrac{1}{2} s ( \mc{N}_\sigma u )^2 \big] + \int_\Omega u_T v_T \text{,}
\end{align}
where $u$ denotes the solution to Problem (O), with final data $u_T$, and with $( X, V )$ as in \eqref{dual_XV}.
Since $\varphi_T$ is a minimizer of $I_\epsilon$, we have
\begin{equation}
\label{eql.variational_1} I_\epsilon ( \varphi_T ) \leq I_\epsilon ( \varphi_T + s u_T ) \text{.}
\end{equation}

As a result, if $s > 0$, then \eqref{eql.variational_0}--\eqref{eql.variational_1} imply
\begin{align}
\label{eql.variational_10} 0 &\leq \limsup_{ s \rightarrow 0+ } \frac{ I_\epsilon ( \varphi_T + s u_T ) - I_\epsilon ( \varphi_T ) }{s} \\
\notag &\leq \epsilon \| u_T \|_{ H^1 ( \Omega ) } + \int_{ ( 0, T ) \times \omega } \mc{N}_\sigma \varphi \mc{N}_\sigma u + \int_\Omega u_T v_T \text{.}
\end{align}
Similarly, if $s < 0$, then an analogous derivation using \eqref{eql.variational_0}--\eqref{eql.variational_1} yields
\begin{align}
\label{eql.variational_11} 0 &\geq \liminf_{ s \rightarrow 0- } \frac{ I_\epsilon ( \varphi_T + s u_T ) - I_\epsilon ( \varphi_T ) }{s} \\
\notag &\geq - \epsilon \| u_T \|_{ H^1 ( \Omega ) } + \int_{ ( 0, T ) \times \omega } \mc{N}_\sigma \varphi \mc{N}_\sigma u + \int_\Omega u_T v_T \text{.}
\end{align}
Thus, combining \eqref{eql.variational_10}--\eqref{eql.variational_11}, we obtain
\begin{equation}
\label{eql.variational_20} \left| \int_{ ( 0, T ) \times \omega } \mc{N}_\sigma \varphi \mc{N}_\sigma u + \int_\Omega u_T v_T \right| \leq \epsilon \| u_T \|_{ H^1 ( \Omega ) } \text{.}
\end{equation}

Finally, letting $v$ be as in the lemma statement, we then have
\begin{align*}
\int_{ ( 0, T ) \times \omega } \mc{N}_\sigma \varphi \mc{N}_\sigma u &= \int_{ ( 0, T ) \times \Gamma } v_d \mc{N}_\sigma u \\
&= - \int_\Omega u_T v (T)
\end{align*}
by Proposition \ref{thm.ctrl_char}, and hence \eqref{eql.variational_20} becomes
\[
\left| \int_\Omega u_T [ v (T) - v_T ] \right| \leq \epsilon \| u_T \|_{ H^1 ( \Omega ) } \text{.}
\]
The desired \eqref{eq.app_ctrl} now follows by varying over all $u_T \in H^1_0 ( \Omega )$.
\end{proof}

In light of Lemma \ref{thm.variational}, it remains only to show that $I_\epsilon$ indeed has a minimizer.
This is accomplished in the subsequent lemma, for which the proof relies crucially on the unique continuation property of Theorem \ref{T.UC}:

\begin{lemma} \label{thm.minimizer}
$I_\epsilon$ has a minimizer $\varphi_T$.
\end{lemma}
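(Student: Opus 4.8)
The plan is to show that $I_\epsilon$ is continuous, strictly convex, and coercive on $H^1_0(\Omega)$, whence it attains its infimum by the direct method in the calculus of variations. The only genuinely nontrivial point is coercivity, which is where the unique continuation property of Theorem \ref{T.UC} enters.

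First I would establish continuity and convexity. The map $u_T \mapsto u$ sending final data to the semigroup solution of Problem (O) is linear, and by Proposition \ref{wp_strict} together with Proposition \ref{neumann_trace}, the further map $u_T \mapsto \mc{N}_\sigma u|_{(0,T)\times\omega}$ is bounded linear from $H^1_0(\Omega)$ into $L^2((0,T)\times\omega)$, satisfying $\|\mc{N}_\sigma u\|_{L^2((0,T)\times\omega)} \lesssim \|u_T\|_{H^1(\Omega)}$. Hence the quadratic term $\tfrac12\int_{(0,T)\times\omega}(\mc{N}_\sigma u)^2$ is continuous and convex, the norm term $\epsilon\|u_T\|_{H^1(\Omega)}$ is continuous and convex, and the linear term $\int_\Omega u_T v_T$ is continuous (since $v_T \in H^{-1}(\Omega)$) and convex; so $I_\epsilon$ is continuous and convex on $H^1_0(\Omega)$. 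Moreover, $I_\epsilon$ is weakly lower semicontinuous, being a sum of a continuous convex functional (hence weakly l.s.c.) with the weakly continuous linear term.

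The main obstacle, and the heart of the argument, is coercivity: I claim $\liminf_{\|u_T\|_{H^1}\to\infty} I_\epsilon(u_T)/\|u_T\|_{H^1(\Omega)} \geq \epsilon$. Suppose not; then there is a sequence $(u_T^{(k)})$ with $\|u_T^{(k)}\|_{H^1(\Omega)}\to\infty$ and $I_\epsilon(u_T^{(k)})/\|u_T^{(k)}\|_{H^1(\Omega)} < \epsilon - \eta$ for some $\eta > 0$ and all large $k$. Normalizing, set $\tilde u_T^{(k)} := u_T^{(k)}/\|u_T^{(k)}\|_{H^1(\Omega)}$, and let $\tilde u^{(k)}$ be the corresponding solution of Problem (O). Dividing through and discarding the nonnegative quadratic term,
\begin{equation*}
\epsilon + \frac{1}{2\|u_T^{(k)}\|_{H^1(\Omega)}}\int_{(0,T)\times\omega}(\mc{N}_\sigma u^{(k)})^2 + \int_\Omega \tilde u_T^{(k)}\, v_T < \epsilon - \eta \text{,}
\end{equation*}
which forces both $\int_\Omega \tilde u_T^{(k)}\, v_T \leq -\eta$ and $\tfrac{1}{\|u_T^{(k)}\|_{H^1(\Omega)}}\int_{(0,T)\times\omega}(\mc{N}_\sigma u^{(k)})^2 \to 0$; combined with the bound $\|\mc{N}_\sigma u^{(k)}\|_{L^2((0,T)\times\omega)}\lesssim 1$, this gives $\int_{(0,T)\times\omega}(\mc{N}_\sigma u^{(k)})^2 \to 0$. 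Passing to a subsequence, $\tilde u_T^{(k)} \rightharpoonup \tilde u_T$ weakly in $H^1_0(\Omega)$; the linear functional $u_T\mapsto\int_\Omega u_T v_T$ is weakly continuous, so $\int_\Omega \tilde u_T\, v_T \leq -\eta < 0$, and in particular $\tilde u_T \not\equiv 0$. On the other hand, the linear map $u_T \mapsto \mc{N}_\sigma u|_{(0,T)\times\omega}$ being bounded from $H^1_0(\Omega)$ to $L^2((0,T)\times\omega)$, it is weakly continuous, so $\mc{N}_\sigma \tilde u^{(k)} \rightharpoonup \mc{N}_\sigma \tilde u$ weakly in $L^2((0,T)\times\omega)$, where $\tilde u$ solves Problem (O) with final data $\tilde u_T$; since the norm is weakly l.s.c. and $\mc{N}_\sigma u^{(k)} \to 0$ strongly, we get $\mc{N}_\sigma \tilde u \equiv 0$ on $(0,T)\times\omega$. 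Fix any $x_0 \in \omega$ and a small ball so that $B^\Gamma_\varepsilon(x_0) \subseteq \omega$ with $\varepsilon$ as in Theorem \ref{T.UC}; then $\mc{N}_\sigma \tilde u$ vanishes on $(0,T)\times B^\Gamma_\varepsilon(x_0)$, and Theorem \ref{T.UC} yields $\tilde u \equiv 0$ on $(0,T)\times\Omega$, hence $\tilde u_T = \tilde u(T) \equiv 0$ — contradicting $\tilde u_T\not\equiv 0$. This establishes coercivity. Finally, coercivity plus weak lower semicontinuity on the reflexive space $H^1_0(\Omega)$ gives, by the direct method, a minimizer $\varphi_T$ of $I_\epsilon$, completing the proof.
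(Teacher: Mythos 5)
Your proposal is correct and follows essentially the same route as the paper: continuity and convexity from Propositions \ref{wp_strict} and \ref{neumann_trace}, then coercivity via normalization, weak compactness, weak convergence of the Neumann traces, and the unique continuation property of Theorem \ref{T.UC} (the paper argues by dichotomy on $\liminf\int(\mc{N}_\sigma\varphi_k)^2$ rather than by contradiction, and justifies the weak convergence of the traces via the duality identity \eqref{weak_id} instead of weak--weak continuity of the bounded linear trace map, but these are cosmetic differences). One notational caution: in your middle display the quantity $\tfrac{1}{\|u_T^{(k)}\|}\int(\mc{N}_\sigma u^{(k)})^2$ is only forced to be \emph{bounded}, not to tend to $0$; what you actually need and can conclude is $\int(\mc{N}_\sigma\tilde u^{(k)})^2=\|u_T^{(k)}\|^{-2}\int(\mc{N}_\sigma u^{(k)})^2\to 0$, since the bounded quantity equals $\|u_T^{(k)}\|\int(\mc{N}_\sigma\tilde u^{(k)})^2$ and $\|u_T^{(k)}\|\to\infty$.
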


\begin{proof}
Propositions \ref{wp_strict} and \ref{neumann_trace} imply that $I_\epsilon$ is continuous and convex.
Thus, it suffices to show $I_\epsilon$ is also coercive, that is, given any sequence $( u_{ T, k } )_{ k \geq 0 }$ in $H^1_0 ( \Omega )$,
\begin{equation}
\label{eql.minimizer_0} \lim_{ k \rightarrow \infty } \| u_{ T, k } \|_{ H^1 ( \Omega ) } = +\infty \quad \Rightarrow \quad \lim_{ k \rightarrow \infty } I_\epsilon ( u_{ T, k } ) = +\infty \text{.}
\end{equation}

Assume now the left-hand side of \eqref{eql.minimizer_0}, and consider the normalized sequence
\begin{equation}
\label{eql.minimizer_1} \varphi_{ T, k } := \frac{ u_{ T, k } }{ \| u_{ T, k } \|_{ H^1 ( \Omega ) } } \text{,} \qquad k \gg 1 \text{.}
\end{equation}
Let $\varphi_k$ be the solution to Problem (O), with $u_T := \varphi_{ T, k }$, and with $( X, V )$ defined as in \eqref{dual_XV}.
Then, the definition \eqref{eq.functional} yields
\begin{equation}
\label{eql.minimizer_2} \frac{ I_\epsilon ( u_{ T, k } ) }{ \| u_{ T, k } \|_{ H^1 ( \Omega ) } } = \epsilon + \| u_{ T, k } \|_{ H^1 ( \Omega ) } \int_{ ( 0, T ) \times \omega } ( \mc{N}_\sigma \varphi_k )^2 + \int_\Omega \varphi_{ T, k } v_T \text{.}
\end{equation}
From here, the proof splits into two cases.

First, suppose that
\begin{equation}
\label{eql.minimizer_10} \liminf_{ k \rightarrow \infty } \int_{ ( 0, T ) \times \omega } ( \mc{N}_\sigma \varphi_k )^2 > 0 \text{.}
\end{equation}
Then, \eqref{eql.minimizer_2} and \eqref{eql.minimizer_10} together imply
\[
\lim_{ k \rightarrow \infty } \frac{ I_\epsilon ( u_{ T, k } ) }{ \| u_{ T, k } \|_{ H^1 ( \Omega ) } } = + \infty \text{,}
\]
and the desired \eqref{eql.minimizer_0} immediately follows.

Next, for the remaining case, suppose
\begin{equation}
\label{eql.minimizer_20} \liminf_{ k \rightarrow \infty } \int_{ ( 0, T ) \times \omega } ( \mc{N}_\sigma \varphi_k )^2 = 0 \text{.}
\end{equation}
Since the sequence $( \varphi_{ T, k } )_{ k \gg 1 }$ is uniformly bounded in $H^1_0 ( \Omega )$, there is a subsequence that converges weakly to some $\varphi_T \in H^1_0 ( \Omega )$; let $\varphi$ then be the solution to Problem (O), with $u_T := \varphi_T$.
Testing $\varphi_k - \varphi$ with solutions of Problem (C) with initial data $v_0 \equiv 0$, the identity \eqref{weak_id} can be applied to conclude that $\mc{N}_\sigma \varphi_k$ converges weakly to $\mc{N}_\sigma \varphi$ in $L^2 ( ( 0, T ) \times \Gamma )$.
Therefore, recalling that continuity implies weak lower semicontinuity, the above then implies
\begin{align}
\label{eql.minimizer_21} \int_{ ( 0, T ) \times \omega } ( \mc{N}_\sigma \varphi )^2 &\leq \liminf_{ k \rightarrow \infty } \int_{ ( 0, T ) \times \omega } ( \mc{N}_\sigma \varphi_k )^2 \\
\notag &= 0 \text{.} 
\end{align}
Applying the unique continuation property of Theorem \ref{T.UC} to \eqref{eql.minimizer_21}, with an arbitrary $x_0 \in \omega$ and $\varepsilon > 0$ small enough so that $B_\varepsilon^\Gamma ( x_0 ) \subseteq \omega$, then yields $\varphi \equiv 0$, and hence $\varphi_T \equiv 0$.
Combining this with \eqref{eql.minimizer_2}, we conclude that
\begin{align*}
\liminf_{ k \rightarrow \infty } \frac{ I_\epsilon ( u_{ T, k } ) }{ \| u_{ T, k } \|_{ H^1 ( \Omega ) } } &\geq \liminf_{ k \rightarrow \infty } \left[ \epsilon + \int_\Omega \varphi_{ T, k } v_T \right] \\
&= \epsilon \text{.}
\end{align*}
The above, in particular, implies the right-hand side of \eqref{eql.minimizer_0}.
\end{proof}

The conclusion of Theorem \ref{thm.approx0} is now immediate, since Lemma \ref{thm.minimizer} produces a minimizer for $I_\epsilon$, which by Lemma \ref{thm.variational} yields a control $v_d$ satisfying \eqref{eq.approx_ctrl}.

\raggedbottom

\section*{Acknowledgments}

The authors thank Alberto Enciso for several prolific discussions that led to the writing of this manuscript.

\end{document}